\documentclass[reqno]{amsart}

\usepackage{amsfonts}
\usepackage{amssymb}

\usepackage{amscd}
\usepackage{pictexwd,dcpic}
\usepackage{graphicx}
\usepackage{tikz}
\usepackage{fullpage}
\usepackage{hyperref}
\hypersetup{
    colorlinks=true,
    linkcolor=blue,
    filecolor=magenta,
    urlcolor=cyan,
    citecolor=cyan,}
\usepackage[nameinlink,capitalize]{cleveref}
\usepackage{enumitem}

\title{Tensor product surfaces and graded syzygies}

\author{Matthew Weaver}
\address{School of Mathematical and Statistical Sciences, Arizona State University, Wexler Hall, Tempe AZ 85281}
\email{matthew.j.weaver@asu.edu}

\date{}

\newtheorem{thm}{Theorem}[section]
\newtheorem*{thm-nonum}{Theorem}

\newtheorem{prop}[thm]{Proposition}
\newtheorem{lemma}[thm]{Lemma}
\newtheorem{cor}[thm]{Corollary}
\numberwithin{equation}{section}

\theoremstyle{definition}
\newtheorem{rem}[thm]{Remark}
\newtheorem{set}[thm]{Setting}

\newtheorem{defn}[thm]{Definition}
\newtheorem{quest}[thm]{Question}
\newtheorem{ex}[thm]{Example}

\Crefname{thm}{Theorem}{Theorems}
\Crefname{section}{Section}{Sections}
\Crefname{ex}{Example}{Examples}

\def\Kos{\mathcal{K}}
\def\K{\mathbb{K}}

\def\m{\mathfrak{m}}

\def\O{\mathcal{O}}

\def\P{\mathbb{P}}
\def\q{\mathfrak{q}}

\def\S{\mathcal{S}}

\def\Z{\mathcal{Z}}

\usepackage{bm}

\def\f{\bm{f}}
\def\y{\bm{y}}

\def\dim{\mathop{\rm dim}}

\def\im{\mathop{\rm im}}
\def\hgt{\mathop{\rm ht}}
\def\rk{\mathop{\rm rank}}

\def\Span{\mathop{\rm Span}}

\def\bideg{\mathop{\rm bideg}}
\def\syz{\mathop{\rm syz}}

\def\Res{\mathrm{Res}}
\def\Syl{\mathrm{Syl}}
\def\Hom{{\rm Hom}}

\DeclareRobustCommand{\longerrightarrow}{
\DOTSB\relbar\joinrel\relbar\joinrel\relbar\joinrel\relbar\joinrel\rightarrow
}
\DeclareRobustCommand{\longestrightarrow}{
\DOTSB\relbar\joinrel\relbar\joinrel\relbar\joinrel\relbar\joinrel\relbar\joinrel\relbar\joinrel\rightarrow
}

\usepackage[all]{xy}

\usepackage{nicematrix}

\begin{document}

\begin{abstract}
Let $U\subseteq H^0(\O_{\P^1\times \P^1}(a,b))$ be a four-dimensional vector space and consider the rational map $\phi_U:\,\P^1\times \P^1 \dashrightarrow \P^3$ defined by its basis of bihomogeneous polynomials. The tensor product surface $X_U\subseteq \P^3$ is the closed image of $\phi_U$, and a fundamental problem in this setting is to determine its implicit equation. As these surfaces are ubiquitous within the field of geometric modeling and design, knowledge of their implicit equations is particularly advantageous, allowing for more effective and efficient computations. In this article, we expand upon work of Duarte and Schenck \cite{DS16} and the present author \cite{Weaver25} to solve this implicitization problem when the bigraded ideal $I_U$ admits a singly graded syzygy.
\end{abstract}

\maketitle


\section{Introduction}

Given a map between projective spaces $\phi:\,\P^m\dashrightarrow\P^n$, a natural problem which arises is to describe the closed image of $\phi$ as a subvariety of $\P^n$. As such a map is defined by homogeneous polynomials in the coordinates of $\P^m$, one must obtain the equations of $\im \phi$ in the coordinates of $\P^n$. The \textit{implicitization problem}, determining the implicit equations of a parameterized variety, has become a fundamental question studied to great length by algebraic geometers and commutative algebraists, using a variety of techniques.

In recent years, the implicitization problem has gained significant interest within the geometric modeling community for its applications to computer-aided geometric design (CAGD), see e.g. \cite{CGZ00,SC95,SGD97,SSQK94}. Parameterized curves and surfaces are ubiquitous in this area, yet often present computational challenges. Many fundamental tasks may be computationally intensive given a parametric form, but become trivial given an implicit model. As such, the development of computationally effective implicitization methods has become an active area of research, drawing upon ideas from algebraic geometry, commutative algebra, and computational methods within both realms.

In this context, \textit{tensor product surfaces} form a notable class of parameterized surfaces. These surfaces arise as the image of a rational map $\P^1\times \P^1\dashrightarrow \P^3$, and their parameterizations inherit a rich structure from the bigraded coordinate ring of $\P^1\times \P^1$. This bigraded structure, however, tends to complicate the implicitization problem, introducing additional algebraic phenomena not witnessed in the singly graded setting. To address this, a variety of techniques have been developed for the implicitization of these surfaces. Common elimination techniques relying on Gr\"obner bases and resultants are straightforward to implement, but tend to be computationally expensive, especially in the multigraded setting. In contrast, syzygy techniques and methods borrowed from the study of \textit{Rees algebras} tend to be much more effective and are hence preferable, see e.g. \cite{Botbol11,BDD09,BC05,BJ03,WG18,Weaver26}. One such method in particular is the \textit{approximation complex} $\Z$ developed by Herzog, Simis, and Vasconcelos \cite{HSV82,HSV83}. Whereas this complex is rarely acyclic, it detects useful syzygetic information and has been shown to be particularly effective for implicitization purposes \cite{Botbol11, BC05, Chardin06,DS16,SSV14,Weaver25}. We refer the reader to \cite{Cox03} for a concise overview of syzygy methods used for implicitization and \cite{Chardin06} for an introduction to the approximation complex $\Z$ and its applications to the implicitization problem.

In this paper, we consider tensor product surfaces which are \textit{basepoint free}, i.e. surfaces arising when the rational map above is regular. Let $R=\K[s,t,u,v]$ be a polynomial ring, for $\K$ an algebraically closed field, bigraded by setting $\bideg s,t =(1,0)$ and $\bideg u,v = (0,1)$. Notice that the global sections $H^0(\O_{\P^1\times \P^1}(a,b))$ may be identified with the bigraded components $R_{a,b}$ of $R$. For a subspace $U\subseteq R_{a,b}$ with basis $\{p_0,p_1,p_2,p_3\}$, we assume that $p_0,p_1,p_2,p_3$ have no common zeros on $\P^1\times \P^1$. As such, the rational map 
$$\phi_U:\,\P^1\times \P^1 \longrightarrow \P^3$$
defined by $\{p_0,p_1,p_2,p_3\}$ is regular, as its base locus is empty, i.e. $U$ is \textit{basepoint free}. Writing $I_U=(p_0,p_1,p_2,p_3)$ to denote the ideal generated by these bihomogeneous forms, notice that $U$ is free of basepoints if and only if $\sqrt{I_U} = (s,t)\cap (u,v)$. In this setting, it is known from \cite{BDD09} that the implicit equation of $X_U$ may be obtained from the approximation complex $\Z$ on the generators of $I_U$. As $\Z$ is constructed from the Koszul complex on $\{p_0,p_1,p_2,p_3\}$ (see \Cref{Prelim Section} for details), an understanding of the homological properties of $I_U$ is particularly useful in this setting.

In \cite{SSV14}, tensor product surfaces $X_U$ for $U\subset R_{2,1}$ are studied in this manner by determining all possible free resolutions of $I_U$. It is noted that the presence of a \textit{linear} syzygy, in bidegree $(0,1)$ or $(1,0)$, on $I_U$  imposes certain constraints and determines some of the differentials in a minimal resolution. This observation is further developed in \cite{DS16}, where it is shown that the implicit equation of $X_U$ can be completely determined from such a linear syzygy, via the approximation complex, without requiring the full syzygy module of $I_U$. More recently, in \cite{Weaver25} these ideas were extended to the case where $I_U$ admits a \textit{quadratic} syzygy in bidegree $(0,2)$ or $(2,0)$. There it is shown that, similarly, such a syzygy determines a particular submodule of $\syz(I_U)$ that is sufficient to determine the implicit equation of $X_U$.

In this paper, we continue this line of work from \cite{DS16,Weaver25} and consider the general setting that the bigraded ideal $I_U$ admits a singly graded syzygy in any degree. We show that this syzygy $S$ alone determines a submodule of $\syz(I_U)$ sufficient to determine the implicit equation of the surface $X_U$, via the approximation complex $\Z$. The key construction used throughout is a subspace $V \subseteq U$ associated to the syzygy $S$, whose basis may be taken as part of a generating set of $I_U$. With this, there are a handful of cases to consider, depending on the dimension of $V$, with each case exhibiting unique behavior. The main results of this article, \Cref{dim V = 2 - main theorem,dim V = 3 - main theorem,dim V = 4 - main theorem}
are summarized as follows.

\begin{thm-nonum}
Assume that $U\subseteq H^0(\O_{\P^1\times \P^1}(a,b))$ is basepoint free, and let $I_U$ denote the ideal of $U$. Assume that $I_U$ has a minimal syzygy $S$ in bidegree $(0,n)$ and $b\geq 2n-1$. Let $V$ be the subspace of $U$ associated to the syzygy $S$.
\begin{enumerate}
    \item[(i)] If $\dim V=2$, then $I_U$ has syzygies $S_1$ and $S_2$ in bidegree $(a,b-n)$ such that $\dim \langle S,S_1,S_2\rangle_{2a-1,b-1}=2ab$.
    \vspace{1mm}

    \item[(ii)] If $\dim V=3$, then $I_U$ has syzygies $S_1$ in bidegree $(a,b-n)$, $S_2$ in bidegree $(a,b-n+\mu)$, and $S_3$ in bidegree $(a,b-\mu)$ for some $\mu$, such that $\dim \langle S,S_1,S_2,S_3\rangle_{2a-1,b-1}=2ab$.
 \vspace{1mm}
 
    \item[(iii)] If $\dim V=4$, then $I_U$ has syzygies $S_1$ in bidegree $(a,b-n+\mu_1)$, $S_2$ in bidegree $(a,b-n+\mu_2)$, and $S_3$ in bidegree $(a,b-\mu_1-\mu_2)$ for some $\mu_1, \mu_2$, such that $\dim \langle S,S_1,S_2,S_3\rangle_{2a-1,b-1}=2ab$.
\end{enumerate}
In each case, the first differential $d_1$ of $\Z_{2a-1,b-1}$ is determined by these syzygies. Moreover, the determinant of a $2ab\times 2ab$ matrix representation of $d_1$ is a power of the implicit equation of $X_U$.
\end{thm-nonum}

By symmetry, a similar result holds if $a\geq 2n-1$ and $I_U$ has a syzygy in bidegree $(n,0)$. Moreover, we note that the additional syzygies on $I_U$ are constructed explicitly and their description is formulaic, hence these constructions may be easily implemented into a computer algebra system, such as \texttt{Macaulay2} \cite{Macaulay2}. In particular, the methods presented here yield more efficient computation of the implicit equation of $X_U$ as they do not require computation of the entire syzygy module $\syz(I_U)$. Lastly, we note that this result recovers the main result of \cite{DS16} when $n=1$, noting $\dim V=2$ in this case, and also recovers the main results of \cite{Weaver25} when $n=2$, noting that $\dim V=2$ or $\dim V=3$ (in which case $\mu =1$) in this setting.

We briefly describe how this paper is organized. In \Cref{Prelim Section}, we introduce the necessary preliminary material required for this article. We review the construction of the approximation complex $\Z$ and its relation to syzygies, and recall the techniques of \cite{Botbol11} using this complex for implicitization. In \Cref{Graded Syz Section}, we introduce the main setting of the paper for a bigraded tensor product surface admitting a singly graded syzygy $S$. The subspace $V\subseteq U$ associated to $S$ is introduced, and it is shown that there are three cases to consider, depending on its dimension. In Sections \ref{dim 2 section}, \ref{dim 3 section}, and \ref{dim 4 section}, we consider the cases that $\dim V =2,3,4$ and construct the additional syzygies above, noting that a basis of $V$ may be taken as part of a generating set of $I_U$. In \Cref{dim 2 section}, with this chosen generating set, the key observation is that the nonzero entries of $S$ form a regular sequence. Similarly, in \Cref{dim 3 section} we use that the nonzero entries of $S$ generate a perfect ideal of codimension two, allowing for the use of the Hilbert-Burch theorem. Lastly, in \Cref{dim 4 section} we use that, when $\dim V=4$, we have $V=U$ and hence a basis of $V$ generates $I_U$. In \Cref{Open questions section}, we conclude the article with several open questions and observations stemming from the results presented here.


\section{Preliminaries}\label{Prelim Section}

We briefly review the preliminary material needed for this paper. We begin with the construction of the \textit{approximation complex} $\Z$ \cite{HSV82,HSV83} associated to the ideal $I_U$. We then recall its applications to the implicitization of tensor product surfaces, following \cite{Botbol11}.

\subsection{Approximation Complex}

We recall the construction of the \textit{approximation complex} $\Z$, introduced by Herzog, Simis, and Vasconcelos in \cite{HSV82,HSV83}. Whereas this complex may be defined more generally, we consider the setting of a rational map $\P^m\dashrightarrow \P^n$ to better match the framework used for the duration of the article.

For such a rational map, consider the ideal of homogeneous forms $I=(f_0,\ldots,f_n) \subseteq R=\K[x_0,\ldots,x_m]$ defining it. Write $\Kos(\f)$
to denote the Koszul complex on the generating set $\f=f_0,\ldots,f_n$ with differentials $d_i^{\f}$. Writing $S=\K[y_0,\ldots,y_n]$ to denote the coordinate ring of $\P^n$, consider the Koszul complex $\Kos(\y)$ on the sequence $\y= y_0,\ldots,y_n$ with differentials $d_i^{\y}$. We now introduce the approximation complex $\Z$ as a hybrid complex from this data.

\begin{defn}
Writing $Z_i = \ker d_i^f$ to denote the cycles of $\Kos(\f)$, the approximation complex $\Z$ is the complex 
$$\Z\,:\,\,0 \rightarrow \Z_{n+1}\overset{d_{n+1}}{\longrightarrow}\Z_{n}\overset{d_{n}}{\longrightarrow} \cdots\cdots \longrightarrow\Z_{1}\overset{d_{1}}{\longrightarrow} \Z_0 $$
where $\Z_i = S\otimes_\K Z_i$ and $d_i = d_i^{\y}$.
\end{defn}

A direct computation shows that $d_{i-1}^{\f} d_{i}^{\y}+d_{i-1}^{\y} d_{i}^{\f} =0$. Hence for any $g\in \Z_i$, we see that $d_{i-1}^{\f} d_{i}^{\y} (g) =-d_{i-1}^{\y} d_{i}^{\f}(g) =0$, hence $d_i(g) \in \Z_{i-1}$ and so these maps are well defined. Moreover, it is clear that $\Z$ is a complex as its differentials agree with those of $\Kos(\y)$. Lastly, we note that the approximation complex $\Z$ depends only on the ideal $I$, and not the choice of generating set.

Now that $\Z$ has been shown to be a complex of modules over $S\otimes_\K R$, we note that it is rarely acyclic. However, within the study of Rees algebras, the homology of $\Z$ is often used to measure the deviation between the Rees ring of $I$ and the \textit{symmetric algebra} of $I$. Indeed, notice that $Z_0=R$ and $Z_1=\syz(I)$, the module of syzygies of $I$. Hence it follows that the first differential of $\Z$ maps $d_1\,:S\otimes_\K\syz(I) \rightarrow S\otimes_\K R$ by
\begin{equation}\label{d_1 syz map}
(a_0,\ldots,a_n)\longmapsto a_0y_0+\cdots+a_ny_n.
\end{equation}
for $(a_0,\ldots,a_n)\in \syz(I)$. With this, it follows that the zeroth homology of $\Z$ is $H_0(\Z) \cong \S(I)$, where $\S(I)$ is the symmetric algebra of $I$.

\begin{rem}\label{syzygies to approx comp}
With the description of the first differential of $\Z$ in (\ref{d_1 syz map}), we note that a representation of $d_1$ can be obtained from the first syzygy matrix of $I$. Indeed, if $R^p\overset{\varphi}{\rightarrow} R^{n+1} \rightarrow I\rightarrow 0$ is a free presentation with syzygy matrix $\varphi$, then $\im d_1 = (\ell_1,\ldots,\ell_p) \subseteq R[y_0,\ldots,y_n] \cong S\otimes_\K R$, where $[\ell_1\ldots \ell_p] = [y_0\ldots y_n]\cdot \varphi$.
\end{rem}

Following \Cref{syzygies to approx comp}, it suffices to understand the syzygies of $I$ to determine this differential. This observation will be particularly useful in the implicitization of tensor product surfaces.

\subsection{Multigraded Implicitization}

We now recall the applications of the approximation complex $\Z$ to the implicitization of tensor product surfaces. This complex has been used in multiple instances for implicitization \cite{BDD09,BC05,Chardin06}, however we refer to the methods developed in \cite{Botbol11} for their applications to the multigraded setting. Notice that $S\otimes_\K R$ is naturally bigraded. Hence 
for $\nu$ a fixed degree in the grading of $R$, the strand 
$$\Z_\nu\,: \quad \cdots \longrightarrow S\otimes_\K (Z_i)_\nu \overset{d_i}{\longrightarrow} S\otimes_\K (Z_{i-1})_\nu \overset{d_{i-1}}{\longrightarrow} \cdots.$$
is a graded complex of $S$-modules. If $R$ is bigraded, as in the setting of a tensor product surface parameterization, one may take a \textit{bigraded} strand $\Z_\nu$, for $\nu$ in the bigrading of $R$, of $S$-modules as well.

\begin{lemma}[{\cite[7.3]{Botbol11}}]\label{Botbol det Z}
Let $U=\{p_0,p_1,p_2,p_3\} \subseteq R_{a,b}$ and let $\phi_U:\,\P^1\times\P^1 \dashrightarrow \P^3$ denote the rational map defined by $U$. Assume that either $\phi_U$ has no basepoints or $\phi_U$ has finitely many basepoints that are locally a complete intersection. Let $\nu=(2a-1,b-1)$ (equivalently $\nu =(a-1,2b-1)$) and let $\Delta_\nu= \det \Z_\nu$. Then
$$\deg (\Delta_\nu) = 2ab - \dim (H_2)_{4a-1,3b-1}.$$
Moreover, the differential $d_1:\, (\Z_1)_\nu \rightarrow (\Z_0)_\nu$ has a matrix representation which is square of size $2ab\times 2ab$ if and only if $(H_2)_{4a-1,3b-1}=0$.
\end{lemma}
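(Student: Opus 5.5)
The plan is to regard the strand $\Z_\nu$ as a finite complex of free graded $S$-modules, where $\nu=(2a-1,b-1)$. It should be acyclic, and it should resolve a module supported exactly on the surface $X_U$. Then $\Delta_\nu$ is the determinant (MacRae invariant) of the complex. It agrees up to a unit with $F^{\deg \phi_U}$, where $F$ is the implicit equation. Its degree can be read off from the ranks and shifts of the free modules in the complex.

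The first step, which I expect to be the main obstacle, is acyclicity together with the identification of the support. Away from the irrelevant locus $V(B)$, with $B=(s,t)\cap(u,v)$, the hypothesis on the base locus gives the following: $I_U$ is locally a complete intersection, or locally the unit ideal. Hence $\Z$ is acyclic there and $H_0(\Z)=\S(I_U)$ agrees locally with the Rees algebra. So every $H_i(\Z)$ with $i>0$ is $B$-torsion, and so is the kernel of $\S(I_U)\to \mathcal R(I_U)$. I would then run the two spectral sequences of the double complex $C^\bullet_B(\Z)$. The goal is the vanishing of $H^j_B(\Z_i)_\nu$ for the relevant pairs $(i,j)$. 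This vanishing uses $Z_i\subseteq \Kos(\f)_i=R(-ia,-ib)^{\binom4i}$ and the known bigraded local cohomology of $R$: the group $H^2_B(R)$ lives in degrees $(\le -2,\ge 0)$ and $(\ge 0,\le -2)$, and $H^3_B(R)$ lives in degrees $(\le -2,\le -2)$. One must check that the degree $(2a-1,b-1)$, after the shifts by $(ia,ib)$, avoids these regions. This bookkeeping is exactly why $\nu=(2a-1,b-1)$ (or symmetrically $(a-1,2b-1)$) is chosen. It yields that $\Z_\nu$ is acyclic and that $H_0(\Z_\nu)=\S(I_U)_\nu$. The latter has rank $0$ over $S$ and is annihilated by a power of $F$, so $\det\Z_\nu$ is a power of $F$.

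Second, I compute the degree. The $S$-module $\Z_i$ in the strand is $S(-i)\otimes_\K (Z_i)_\nu$, because $d_i$ is linear in $\y$. Hence
$$\deg\Delta_\nu=\sum_i(-1)^{i+1}\, i\,\dim_\K (Z_i)_\nu .$$
To evaluate this sum I write $Z_i$ as an extension of $H_i(\f)$ by the boundaries $B_i$, and use $B_i\cong \Kos(\f)_{i+1}/Z_{i+1}$; note that $Z_4=0$. Telescoping then expresses every $\dim(Z_i)_\nu$ through the dimensions of the Koszul modules $R(-ia,-ib)_\nu^{\binom4i}$ and of the Koszul homologies $H_i$ in degree $\nu+(ia,ib)$. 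Since $R$ is a domain, $H_4=H_3=0$. The $H_1$ contribution should vanish in the relevant degree, again by the local cohomology vanishing from the first step. The only surviving correction is $H_2$ in degree $\nu+(2a,2b)=(4a-1,3b-1)$. The main term is an alternating sum of binomial products $\dim R_{\nu-(ia,ib)}$, and it should simplify to $2ab=(a,b)^2=\deg\phi_U\cdot\deg X_U$. This gives $\deg\Delta_\nu=2ab-\dim(H_2)_{4a-1,3b-1}$.

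Finally, for the square-matrix statement I note that $\dim(Z_0)_\nu=\dim R_{2a-1,b-1}=2ab$. The matrix of $d_1$ is therefore square exactly when $\dim(Z_1)_\nu=2ab$. By acyclicity, the complex collapses to $d_1$ precisely when $(Z_2)_\nu=(Z_3)_\nu=0$. The same telescoped count shows this happens iff $(H_2)_{4a-1,3b-1}=0$, and then $\Delta_\nu=\det d_1$.
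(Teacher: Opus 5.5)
The paper gives no proof of this lemma; it quotes Botbol [7.3]. Your overall frame is the right one: acyclicity of $\Z_\nu$ from the base-locus hypothesis, then $\deg\Delta_\nu=\sum_i(-1)^{i+1}i\,z_i$ with $z_i=\dim_\K(Z_i)_{\nu+i(a,b)}$, then the $z_i$ expressed through Koszul homology. The gap is in your second step. There the count is wrong, and as written it cannot produce the formula.

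For $\nu=(2a-1,b-1)$ one has $(\Kos(\f)_{i+1})_{\nu+i(a,b)}=R_{a-1,-1}^{\binom{4}{i+1}}=0$. So there are no boundaries in any relevant degree, and $z_i=\dim_\K(H_i)_{\nu+i(a,b)}$ for all $i\geq 1$. In particular $z_1=\dim_\K(H_1)_{3a-1,2b-1}$ is the whole space of syzygies with coefficients in $R_{2a-1,b-1}$, i.e.\ the number of columns of $d_1$. It is at least $2ab$, and your own step 3 needs it to equal $2ab$ in the square case. So your claim that the $H_1$ contribution vanishes is false. There is also no binomial main term. Indeed $\dim_\K R_{\nu-i(a,b)}=0$ for $i\geq 1$, and the one nonzero term, $\dim_\K R_\nu=2ab$, enters $\sum_i(-1)^{i+1}i\,z_i$ with coefficient $0$. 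Done your way, the count gives $\deg\Delta_\nu=z_1-2z_2$ with $z_1$ undetermined. If $H_1$ is dropped it gives the absurd value $-2\dim_\K(H_2)_{4a-1,3b-1}$.

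The missing step is to use generic exactness inside the degree computation, not only in step 3. Acyclicity together with $H_0(\Z_\nu)=\S(I_U)_\nu$ having $S$-rank $0$ gives $\sum_i(-1)^i z_i=0$. Adding this to the degree formula eliminates the unknown $z_1$:
\[
\deg\Delta_\nu=\sum_i(-1)^{i+1}(i-1)\,z_i=z_0-z_2+2z_3 .
\]
The three terms are as follows:
\begin{itemize}
\item $z_0=\dim_\K R_{2a-1,b-1}=2ab$. This is the source of $2ab$, not $\deg\phi_U\cdot\deg F$; by the next lemma that product equals $2ab-\sum e_x$ when there are base points.
\item $z_2=\dim_\K(H_2)_{4a-1,3b-1}$.
\item $z_3=\dim_\K(H_3)_{5a-1,4b-1}=0$, because $I_U$ has grade $2$. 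Being a domain only kills $H_4$.
\end{itemize}
This gives the degree formula. The same relation, $z_1=2ab+z_2$, gives the square criterion.

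Separately, in step 1 the vanishing of $H^j_B(Z_i)$ does not follow from $Z_i\subseteq\Kos(\f)_i$ and the local cohomology of $R$ alone. You need the sequences $0\to Z_i\to\Kos(\f)_i\to B_{i-1}\to 0$ and $0\to B_i\to Z_i\to H_i\to 0$, together with control of the support of the $H_i$. That is what Botbol's acyclicity results provide.
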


Here $(H_2)_{4a-1,3b-1}$ denotes the second homology module of the bigraded strand $\Z_{4a-1,3b-1}$. For our purposes we will only consider the setting where $U$ is free of basepoints, in which case $(H_2)_{4a-1,3b-1}=0$ and the determinant of $\Z_{2a-1,b-1}$ is simply the determinant of a square matrix representation of $d_1$ in bidegree $(2a-1,b-1)$. However, we refer the reader to \cite{Chardin06} for the general notion of the determinant of a complex, within the context of implicitization.

\begin{lemma}[{\cite[7.4]{Botbol11}}]\label{Botbol degree lemma}
With the conditions of \Cref{Botbol det Z}, suppose that the basepoints of $U$ (if any) have multiplicity $e_x$. One has
$$\deg (\phi_U)\deg(F) = 2ab -\sum e_x$$
where $F\in S=\K[y_0,y_1,y_2,y_3]$ is the implicit equation of $X_U$.
\end{lemma}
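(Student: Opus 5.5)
This lemma is quoted from \cite[7.4]{Botbol11}; we sketch how we would argue it. Set $\nu=(2a-1,b-1)$ and $\Delta_\nu=\det\Z_\nu$. The idea is to compute $\deg\Delta_\nu$ in two ways: once from \Cref{Botbol det Z}, and once by identifying $\Delta_\nu$ as a power of $F$.

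\textbf{Step 1: $\Delta_\nu$ is a power of $F$.} The strand $\Z_\nu$ is a graded complex of free $S$-modules. By the standard theory of determinants of complexes (\cite{Chardin06}), if $\Z_\nu$ is acyclic off a finite set of points, then
$$\Delta_\nu=\prod_{\mathfrak p}\mathfrak p^{\,\ell(H_0(\Z_\nu)_{\mathfrak p})}$$
up to a unit. Here the product runs over height-one primes $\mathfrak p$ of $S$, and $H_0(\Z_\nu)\cong\S(I)_\nu$. Over the locus where $\phi_U$ is defined and the basepoints are locally complete intersections, $\S(I)$ agrees with the Rees algebra up to torsion supported on the base locus. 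So, for $\nu$ in the stated range, the only height-one prime in the support of $H_0$ is $(F)$. We therefore get $\Delta_\nu=F^{\,e}$, where
$$e=\ell\bigl(H_0(\Z_\nu)_{(F)}\bigr).$$

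\textbf{Step 2: compute the exponent.} Localize at $(F)$ and pass to the generic point of $X_U$. The length $e$ counts the preimages of a general point of $X_U$, namely the degree of $\phi_U$ onto its image. We would verify this by projecting the graph of $\phi_U$ and using that $\nu$ lies in the region where the sheaf associated to $\S(I)_\nu$ restricted to the fibre has global sections equal to its length.

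\textbf{Step 3: the basepoint correction.} By \Cref{Botbol det Z},
$$\deg\Delta_\nu=2ab-\dim(H_2)_{4a-1,3b-1}.$$
In the locally complete intersection case, the dimension of $(H_2)_{4a-1,3b-1}$ equals $\sum e_x$. This is because $H_2$ of the approximation complex is dual to the local cohomology of $R/I$ at the basepoints, and each local contribution is the multiplicity $e_x$. Comparing Steps 1–3 gives
$$\deg(\phi_U)\deg F=2ab-\sum e_x.$$

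\textbf{Main obstacle.} We expect Step 2 to be the hardest part. It needs a careful vanishing of local cohomology of $\S(I)$ in degree $\nu$, so that the length at the generic point equals the degree of the map. The identification $\dim (H_2)_{4a-1,3b-1}=\sum e_x$ in Step 3 is also delicate.
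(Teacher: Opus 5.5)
\textbf{Step 3 is a genuine gap, and the whole numerical content of the lemma sits there.}

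Steps 1--2 sketch the known equality $\Delta_\nu=F^{\deg\phi_U}$, which is the first assertion of \Cref{Botbol implicit eqn}. Combined with \Cref{Botbol det Z}, they give only
\[
\deg(\phi_U)\deg F=2ab-\dim(H_2)_{4a-1,3b-1}.
\]
What remains is therefore exactly the identity $\dim(H_2)_{4a-1,3b-1}=\sum e_x$. In the paper this identity is the \emph{output} of \Cref{Botbol implicit eqn}, obtained by combining \Cref{Botbol det Z} with the very lemma you are proving. Using it as input is circular unless you prove it independently, and your one-line justification does not do that.

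The $H_2$ here is the Koszul homology $H_2(p_0,\dots,p_3;R)$, with the standard shifts $K_i=R(-i(a,b))^{\binom{4}{i}}$, taken in bidegree $(4a-1,3b-1)=\nu+2(a,b)$. There are no Koszul boundaries in that bidegree, so it is exactly the space of $2$-cycles with entries in $R_\nu$ that make up $(\Z_2)_\nu$. Since $I_U$ has grade $2$ (it lies in $(s,t)\cap(u,v)$ and has only finitely many basepoints), this module is a twist of $\mathrm{Ext}^2_R(R/I_U,R)$. Its support contains $V(s,t)\cup V(u,v)$, so it is nonzero even when $U$ is basepoint free. It is therefore not ``local cohomology at the basepoints'' with local pieces $e_x$. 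The claim that its $(4a-1,3b-1)$-component has dimension exactly $\sum e_x$ (for instance, that it vanishes when there are no basepoints, the case the paper uses) is special to this bidegree, and your sketch gives no argument for it.

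\textbf{The missing idea:} the lemma is the classical degree formula, an intersection-number statement that needs no approximation complex (cf.\ Fulton, \emph{Intersection Theory}, Prop.~4.4 and Ex.~4.3.4).
Pull back two general planes of $\P^3$ to general members $D_1,D_2$ of the linear system $|U|$. Away from the base points, $D_1\cap D_2$ is the preimage of the general line $L$ the planes cut out.
\begin{enumerate}
\item[(1)] By \Cref{Hartshorne lemma}, $D_1\cdot D_2=ab+ba=2ab$.
\item[(2)] At a basepoint $x$, the local equations $\theta_1,\theta_2$ of $D_1,D_2$ are general elements of $I_{U,x}$. They therefore generate a reduction of $I_{U,x}$ in the two-dimensional regular local ring $\O_{\P^1\times\P^1,x}$. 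Hence the local intersection number is $\ell\bigl(\O_{\P^1\times\P^1,x}/(\theta_1,\theta_2)\bigr)=e(I_{U,x})=e_x$.
\item[(3)] The rest of $D_1\cap D_2$ lies over the $\deg F$ points of $L\cap X_U$. By the projection formula it has length $\deg(\phi_U)\deg F$.
\end{enumerate}
This gives $\deg(\phi_U)\deg F=2ab-\sum e_x$ directly. Only afterwards do \Cref{Botbol det Z} and $\Delta_\nu=F^{\deg\phi_U}$ yield $\dim(H_2)_{4a-1,3b-1}=\sum e_x$, which is the reverse of your order of deduction. In the basepoint-free case used throughout the paper, the formula is simply $(a,b)^2=2ab$.
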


Combining \Cref{Botbol det Z,Botbol degree lemma}, we obtain our primary tool to determine the implicit equation of $X_U$, which will be used throughout the article.

\begin{thm}[{\cite[7.5]{Botbol11}}]\label{Botbol implicit eqn}
    With the assumptions of \Cref{Botbol det Z}, we have that $\Delta_\nu = F^{\deg \phi_U}$. In particular, from  \Cref{Botbol degree lemma} we have
    $$\deg \Delta_\nu =\deg (F^{\deg \phi_U}) = (\deg F)(\deg \phi_U) =  2ab -\sum e_x.$$
    Hence by \Cref{Botbol det Z}, it follows that $\dim (H_2)_{4a-1,3b-1} =\sum e_x$.
\end{thm}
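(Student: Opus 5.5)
The plan is to identify $\Delta_\nu$ with the divisor of the $S$-module $H_0(\Z_\nu)$, using the standard fact that the determinant of a complex of free modules computes the divisor of its homology in codimension one. Once $\Delta_\nu=F^{\deg\phi_U}$ is established, the remaining assertions follow formally from \Cref{Botbol degree lemma} and \Cref{Botbol det Z}.

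\textbf{Step 1 (codimension of the homology).} I would first show that for $\nu=(2a-1,b-1)$ the strand $\Z_\nu$ is a finite complex of free $S$-modules with $H_i(\Z_\nu)$, $i\ge 1$, supported in codimension at least $2$ in $\operatorname{Spec} S$. By Herzog--Simis--Vasconcelos, $\Z$ is acyclic locally at primes of $R$ where $I_U$ is locally a complete intersection (or the unit ideal), hence away from the irrelevant ideals. The hypothesis on the basepoints gives exactly this. It follows that the higher homology $H_i(\Z)$ is annihilated by a power of $(s,t)\cap(u,v)$, so it is controlled by local cohomology with respect to the irrelevant ideal. The choice $\nu=(2a-1,b-1)$ is the bigraded analogue of the regularity bound making these local cohomology contributions vanish, or have finite length, in degree $\nu$. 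This is the step I expect to be the main obstacle: it needs a bigraded Čech/spectral sequence argument comparing $\Z$ with the local cohomology of the cycles $Z_i$. The surviving $H_2$ contribution is precisely what appears as $(H_2)_{4a-1,3b-1}$ in \Cref{Botbol det Z}.

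\textbf{Step 2 (divisor of $H_0$).} Given Step 1, the theory of determinants of complexes (MacRae invariants, as in \cite{Chardin06}) yields
$$\operatorname{div}(\Delta_\nu)=\sum_{\hgt \mathfrak p=1}\operatorname{length}_{S_\mathfrak p}\bigl(H_0(\Z_\nu)_\mathfrak p\bigr)[\mathfrak p].$$
Here $H_0(\Z_\nu)=\S(I_U)_\nu$. Away from the basepoints the symmetric and Rees algebras agree, so $H_0(\Z_\nu)$ agrees in codimension one with $\mathcal R(I_U)_\nu$. Its support is the closed image $X_U=V(F)$, so the only height-one prime that occurs is $(F)$.

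\textbf{Step 3 (computing the length).} Localizing at $(F)$, I would identify $\mathcal R(I_U)_\nu$ with sections of $\O(\nu)$ on the graph of $\phi_U$ over the generic point of $X_U$. This graph is a finite scheme of degree $\deg\phi_U$ over the function field of $X_U$, and twisting by $\O(\nu)$ does not change the length at the generic point. Hence the length is $\deg\phi_U$, and $\Delta_\nu=cF^{\deg\phi_U}$ with $c\in\K^\times$.

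The degree formula then follows by combining this with \Cref{Botbol degree lemma}. Comparing it with the formula $\deg\Delta_\nu=2ab-\dim(H_2)_{4a-1,3b-1}$ of \Cref{Botbol det Z} gives $\dim(H_2)_{4a-1,3b-1}=\sum e_x$.
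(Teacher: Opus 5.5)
\textbf{Verdict: genuine gap.} Your final paragraph matches the paper exactly. The paper does not prove $\Delta_\nu=F^{\deg\phi_U}$; it imports it from \cite[7.5]{Botbol11}. The degree formula and $\dim(H_2)_{4a-1,3b-1}=\sum e_x$ are then read off formally from \Cref{Botbol degree lemma} and \Cref{Botbol det Z}.

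Your Steps 1--3 instead try to re-derive Botbol's equality through MacRae invariants. Step 1 carries essentially all of the content, namely the vanishing of the bigraded local cohomology of the Koszul cycles in degree $\nu$, and you only announce it. More seriously, Steps 2 and 3 justify statements that depend on $\nu$ using facts about sheaves that do not, and those justifications are wrong.

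\textbf{Step 2.} The kernel of $\S(I_U)\to\mathcal{R}(I_U)$ is $H^0_B(\S(I_U))$, where $B=(s,t)\cap(u,v)$. ``Sym and Rees agree away from the basepoints'' only says this kernel is $B$-torsion. It says nothing about the support in $\operatorname{Spec} S$ of its degree-$\nu$ strand, and that support can be all of $\operatorname{Spec} S$. At $\nu=(0,0)$, the $p_i$ are linearly independent, so $\Z_\nu=[\,0\to S\,]$ and the conclusion of your Step 1 holds trivially. Yet $H_0(\Z_\nu)=S$, while $\mathcal{R}(I_U)_{0,0}\cong\K[p_0t,\ldots,p_3t]\cong S/(F)$, so the complex is not even generically exact. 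What you need is $H^0_B(\S(I_U))_{2a-1,b-1}=0$, at least after localizing at height-one primes of $S$. This has to come out of the same \v{C}ech spectral-sequence analysis as Step 1, as a separate output from the vanishing of $H_i$ for $i\ge 1$. It is also what makes $H_0(\Z_\nu)$ torsion, without which your divisor formula for $\Delta_\nu$ does not apply.

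\textbf{Step 3.} Let $\kappa=\operatorname{Frac}(S/(F))$ and let $Y\subset\P^1_\kappa\times\P^1_\kappa$ be the generic fibre. Then $\mathcal{R}(I_U)_\nu\otimes_S S_{(F)}$ is a $\kappa$-vector space of dimension equal to the Hilbert function of $Y$ at $\nu$, that is, the dimension of the image of $\kappa\otimes R_\nu$ in $H^0(\O_Y(\nu))$. This is not automatically $h^0(\O_Y(\nu))=\deg\phi_U$. Twisting preserves $h^0$ but not the Hilbert function: at $\nu=(0,0)$ the length is $1$, whereas $\deg\phi_U=2$ in the paper's example.

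So you must show that $Y$ imposes independent conditions on forms of bidegree $(2a-1,b-1)$. This is true. $Y$ lies in the base locus of the net $\{\sum c_ip_i:\sum c_iy_i=0\}$, which is finite (the fibre plus the basepoints). Two general members of the net therefore meet in a complete intersection $Z$ of length $2ab$. Twist $0\to\O(-2a,-2b)\to\O(-a,-b)^2\to\mathcal{I}_Z\to 0$ by $\nu=(2a-1,b-1)$ and apply K\"unneth; all cohomology of $\O(a-1,-1)$ and of $\O(-1,-b-1)$ vanishes. Hence $H^0(\mathcal{I}_Z(\nu))=H^1(\mathcal{I}_Z(\nu))=0$, which gives $\kappa\otimes R_\nu\cong H^0(\O_Z(\nu))\twoheadrightarrow H^0(\O_Y(\nu))$.

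With these two additions, and Step 1 actually carried out, your route would give a self-contained proof of the cited result; as written it does not.
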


In the absence of basepoints, the differential $(d_1)_\nu:\, (\Z_1)_\nu \rightarrow (\Z_0)_\nu$ is a square $2ab\times 2ab$ matrix whose determinant is a power of the implicit equation $F$. Whereas the differential $d_1:\,\Z_1\rightarrow \Z_0$ is determined by the syzygy module of $I_U$ by \Cref{syzygies to approx comp}, we note that the bigraded strand $(d_1)_\nu:\, (\Z_1)_\nu \rightarrow (\Z_0)_\nu$ is often determined by far fewer syzygies of $I_U$.


\section{Graded syzygies of bigraded ideals}\label{Graded Syz Section}

With the necessary preliminaries established in \Cref{Prelim Section}, we now introduce the main setting for the duration of the paper. Although many of the conventions have been stated in the introduction, we briefly restate them here for convenience. Our primary setting is the following.

\begin{set}\label{General Setting}
Let $R=\K[s,t,u,v]$ with $\bideg s,t =(1,0)$ and $\bideg u,v = (0,1)$. Let $U \subseteq R_{a,b}$ be a subspace with basis $\{p_0,p_1,p_2,p_3\}$ and let $I_U = (p_0,p_1,p_2,p_3) \subseteq R$. Assume that $U$ is basepoint free and let $\phi_U:\, \P^1\times \P^1 \longrightarrow \P^3$ be the regular map defined by $U$, with image $X_U$. Assume that $I_U$ has a first syzygy $S$ of bidegree $(0,n)$ and that $b\geq 2n-1$. Moreover, assume that $S$ is a singly graded syzygy in minimal degree, i.e. that $I_U$ has no syzygy in bidegree $(0,m)$ for $m<n$.
\end{set}

Here we may safely assume that $S$ has minimal degree, in case there are multiple singly graded syzygies. Moreover, this minimality condition will be particularly useful in the following arguments.

To begin, we follow the approach of \cite{Weaver25} and construct a $\K$-vector space associated to the syzygy $S$. As $S$ consists of homogeneous polynomial entries in the subring $A=\K[u,v]$, we have
$$\sum_{i=0}^3 \big(\sum_{j=0}^n a_{ij} u^{n-j}v^j\big) p_i =0$$
for some $a_{ij}\in \K$. Expanding and rearranging, we obtain
\begin{equation}\label{Syzygy rearrangement}
    0=\sum_{i=0}^3 \big(\sum_{j=0}^n a_{ij} u^{n-j}v^j\big) p_i = \sum_{j=0}^n \big( \sum_{i=0}^3 a_{ij} p_i\big) u^{n-j}v^j= \sum_{j=0}^n f_j u^{n-j}v^j
\end{equation}
where we write $f_j = \sum_{i=0}^3 a_{ij} p_i$ for $0\leq j\leq n$.

Similar to the observations made in \cite{Weaver25}, the collection $\{f_0,\ldots,f_n\}$ will be key to determining the implicit equation of $X_U$. Notice that, as these polynomials are combinations of the generators of $I_U$, each $f_j$ is bihomogeneous of bidegree $(a,b)$. In particular, the set $\{f_0,\ldots,f_n\}$ spans a $\K$-subspace of $U$.

\begin{rem}\label{f0 and fn nonzero}
We note that the set $\{f_0,\ldots,f_n\}$ is non-vanishing. Indeed, since $\{p_0,p_1,p_2,p_3\}$ is a basis of $U$, we note that $f_j =0$ if and only if $a_{0j} = a_{1j} = a_{2j} = a_{3j}=0$. As these are the coefficients of the polynomial entries of $S$, it follows that at least one of $f_0,\ldots,f_n$ is nonzero. In particular, from the assumptions above, it follows that both $f_0 \neq 0$ and $f_n \neq 0$. Indeed, if $f_0 =0$, then (\ref{Syzygy rearrangement}) shows that 
$$0=\sum_{j=1}^n f_j u^{n-j}v^j = v\cdot\sum_{j=1}^n f_j u^{n-j}v^{j-1}.$$
However, since $R$ is a domain, we then have $\sum_{j=1}^n f_j u^{n-j}v^{j-1} =0$. Hence by rearranging as in (\ref{Syzygy rearrangement}), it follows that $I_U$ has a syzygy in bidegree $(0,n-1)$, which is a contradiction. Similarly, it follows that $f_n\neq 0$.
\end{rem}

Following the approach of \cite{Weaver25}, we consider the vector space $V = \Span_\K\{f_0,\ldots,f_n\}$. Note that this is a $\K$-subspace of $U =  \Span_\K\{p_0,p_1,p_2,p_3\}$.

\begin{prop}\label{Dimension of V}
With the assumptions of \Cref{General Setting} and $V$ the subspace spanned by $\{f_0,\ldots,f_n\}$, we have that $2\leq \dim V\leq 4$.
\end{prop}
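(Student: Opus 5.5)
The upper bound is immediate. $V=\Span_\K\{f_0,\ldots,f_n\}$ is a $\K$-subspace of $U=\Span_\K\{p_0,p_1,p_2,p_3\}$, and $\dim U=4$, so $\dim V\leq 4$. The content of the statement is therefore the lower bound $\dim V\geq 2$. By \Cref{f0 and fn nonzero} we already have $f_0\neq 0$, so $\dim V\geq 1$, and it remains to rule out $\dim V=1$.

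Suppose, for contradiction, that $\dim V=1$. Since $f_0\neq 0$, every $f_j$ is a scalar multiple of $f_0$, say $f_j=c_jf_0$ with $c_j\in\K$ and $c_0=1$. Substituting into (\ref{Syzygy rearrangement}) gives
$$0=\sum_{j=0}^n c_jf_0\,u^{n-j}v^j=f_0\cdot g, \qquad g=\sum_{j=0}^n c_j u^{n-j}v^j\in A=\K[u,v].$$
The coefficient of $u^n$ in $g$ is $c_0=1$, so $g\neq 0$. Because $R$ is a domain and $f_0\neq 0$, the product $f_0g$ cannot vanish, which is the desired contradiction.

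Here the case $n\geq 1$ is used implicitly, since a syzygy of bidegree $(0,0)$ would be a $\K$-linear relation among the basis $p_0,\ldots,p_3$, which is impossible. So $n\geq 1$ throughout, although the domain argument does not actually depend on this. I expect no real obstacle; the only care needed is to invoke $f_0\neq 0$ from \Cref{f0 and fn nonzero} to guarantee that the scalar polynomial $g$ is nonzero.
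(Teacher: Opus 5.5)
Your proposal is correct and follows essentially the same route as the paper: you get the upper bound from $V\subseteq U$, and you rule out $\dim V=1$ by writing each $f_j$ as a scalar multiple of $f_0\neq 0$ and reaching the impossible relation $f_0\cdot\sum_j c_j u^{n-j}v^j=0$ in the domain $R$. Your extra remark that $n\geq 1$ holds automatically is accurate but not needed.
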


\begin{proof}
The latter inequality is clear, noting that $V$ is a subspace of $U$. Following \Cref{f0 and fn nonzero}, we have that $V \neq 0$, and so it suffices to show that $\dim V \neq 1$. Suppose that $\dim V =1$ and recall that $f_0 \neq 0$ by \Cref{f0 and fn nonzero}. Thus $\{f_0\}$ is a basis for $V$, and so we have $f_j = b_jf_0$ for some $b_j \in \K$, for $1\leq j\leq n$. Thus by (\ref{Syzygy rearrangement}), we see
$$0=\sum_{j=0}^n f_j u^{n-j}v^j = f_0 \cdot \sum_{j=0}^n b_j u^{n-j}v^j$$
where we take $b_0 =1$. Since $R$ is a domain and $f_0 \neq 0$, we have $\sum_{j=0}^n b_j u^{n-j}v^j=0$. However, this is impossible as $\{u^n,u^{n-1}v,\ldots,v^n\}$ is $\K$-linearly independent and $b_0\neq 0$. 
\end{proof}

By \Cref{Dimension of V}, there are three possible values for $\dim V$. In the following sections, we consider the implicitization of $X_U$ from the syzygies of $I_U$ in each of these cases.

\begin{rem}\label{low n dimension remark}
We note the range of $\dim V$ in \Cref{Dimension of V} can be refined when $n$ is small. Indeed, since $V$ is spanned by $n+1$ polynomials, we have $2\leq \dim V\leq \min\{n+1,4\}$. In particular, when $n=1$ and $I_U$ has a linear syzygy, as in the setting of \cite{DS16}, then $\dim V=2$. If $n=2$, then $\dim V=2$ or $\dim V=3$, as noted in \cite{Weaver25}. As these cases have been previously studied, one may safely assume that $n\geq 3$ throughout. However, we do not impose this restriction, so as to extend and recover these previous results.
\end{rem}

\begin{rem}\label{first dim V generators}
We note that it is relatively simple to compute $\dim V$ from the assumptions in \Cref{General Setting}. With the coefficients of the polynomials in S in (\ref{Syzygy rearrangement}), write $A=(a_{ij})$ to denote the $4\times (n+1)$ transition matrix with
\begin{equation}\label{A transition matrix equation}
[f_0\ldots f_n] = [p_0\ldots p_3]\, A.
\end{equation}
Since $\{p_0,p_1,p_2,p_3\}$ is a basis of $U$, and hence $\K$-linearly independent, it follows that $\dim V =\rk A$, which is a straightforward linear algebra computation. With this, and the transition equation above, we also note that a basis of $V$ may be taken as part of a minimal generating set of $I_U$. Indeed, from the matrix equation above, this follows as $\dim V =\rk A$ is the largest size of a nonzero, and hence invertible, minor of $A$.
\end{rem}

To avoid cumbersome notation with the indices, write $\{f_0',\ldots,f_{\dim V-1}'\}$ to denote a chosen subset of $\{f_0,\ldots,f_n\}$ which forms a basis of $V$. As mentioned, these basis polynomials may be taken as minimal generators of $I_U$, hence after possibly rearranging we may take $f_0',\ldots,f_{\dim V-1}'$ as the first $\dim V$ generators of $I_U$. In particular, any syzygy on the subideal $(f_0',\ldots,f_{\dim V-1}')$ may be extended to a syzygy on $I_U$. 

\begin{rem}\label{B transition matrix and syzygies remark}
With $\{f_0',\ldots,f_{\dim V-1}'\}$ a basis of $V$, there exists a $(\dim V) \times(n+1)$ transition matrix $B$ with entries in $\K$ such that 
\begin{equation}\label{B transition matrix equation}
[f_0\ldots f_n] = [f_0'\ldots f_{ \dim V -1}']\, B. 
\end{equation}
With this, notice that if $C$ is a syzygy on $(f_0,\ldots,f_n)$, then $BC$ is a syzygy on $(f_0',\ldots, f_{\dim V -1}')$. In particular, from (\ref{Syzygy rearrangement}) we see that $[u^n\, u^{n-1}v\, \ldots \,v^n]^T$ is a syzygy on $(f_0,\ldots,f_n)$. Thus $B [u^n\,\, u^{n-1}v\,\, \ldots \,\,v^n]^T$ is a syzygy on $(f_0',\ldots, f_{\dim V -1}')$. Moreover, choosing $\{f_0',\ldots, f_{\dim V -1}'\}$ as the first $\dim V$ generators of $I_U$ as in \Cref{first dim V generators}, this syzygy is precisely the singly graded syzygy $S$ in \Cref{General Setting}.
\end{rem}


\section{Syzygies in the case $\dim V=2$}\label{dim 2 section}

We begin with the first case of \Cref{Dimension of V} and assume that $\dim V=2$ throughout. With the syzygy $S$ in bidegree $(0,n)$, we note this setting has been studied for $n=1$ in \cite{DS16} and for $n=2$ in \cite{Weaver25}, following the observation made in \Cref{low n dimension remark}.

With the assumptions of \Cref{General Setting} and the singly graded syzygy $S$, we may construct the set $\{f_0,\ldots,f_n\}$ as in (\ref{Syzygy rearrangement}). As we assume that $\dim V=2$, there are $f_0', f_1'$ among this set such that $\{f_0',f_1'\}$ is a basis for $V$. Following \Cref{first dim V generators}, after possibly rearranging, we take $I_U = (f_0',f_1',p_2,p_3)$ as our chosen generating set for the duration of this section. As our goal is to obtain syzygies on $I_U$ sufficient to produce the implicit equation, we begin by describing the syzygetic behavior of the subideal generated by $f_0',f_1'$.

By \Cref{B transition matrix and syzygies remark}, there exists a $2\times (n+1)$ transition matrix $B$ with entries in $\K$ such that  
\begin{equation}\label{dim V = 2 - basis transition equation}
[f_0'\, \,f_1']\, B= [f_0\,\ldots\,f_n].
\end{equation}
Furthermore, recall from (\ref{Syzygy rearrangement}) that $[u^n\,\, u^{n-1}v\,\, \ldots \,\,v^n]^T$ is a syzygy on $(f_0,\ldots,f_n)$, hence  
\begin{equation}\label{dim V=2 - g_0,g_1 product equation}
    B\, [u^n\, u^{n-1}v\, \ldots \,v^n]^T = \begin{bmatrix}
    g_0\\
    g_1
\end{bmatrix}
\end{equation}
is a syzygy on $(f_0',f_1')$, for some $g_0,g_1\in \K[u,v]$. Moreover, from \Cref{B transition matrix and syzygies remark} and the chosen generating set $I_U = (f_0',f_1',p_2,p_3)$, the syzygy $S$ in bidegree $(0,n)$ in \Cref{General Setting} is precisely
\begin{equation}\label{dim V =2 - S defn}
S=\begin{bmatrix}
    g_0\\
    g_1\\
    0\\
    0
\end{bmatrix}_.
\end{equation}

\begin{prop}\label{dim V = 2 - g_0 g_1 regular sequence}
With the assumptions of \Cref{General Setting}, assume that $\dim V =2$. With (\ref{dim V=2 - g_0,g_1 product equation}), we have that $g_0, g_1$ is an $R$-regular sequence.    
\end{prop}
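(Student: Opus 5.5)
Since $g_0,g_1$ are homogeneous of degree $n$ in $A=\K[u,v]$ and $R=A[s,t]$ is a flat (indeed free) extension of $A$, it suffices to show that $g_0,g_1$ is a regular sequence in $A$. In the two-variable polynomial ring $A$, this holds exactly when $g_0\neq 0$ and $g_0,g_1$ have no common factor of positive degree, i.e. $\gcd(g_0,g_1)=1$. So the plan is to prove that both $g_i$ are nonzero and coprime, and then pass from $A$ to $R$ by flatness. Alternatively, we can note directly that $\gcd$ in $A$ and in $R$ agree, so $g_1$ is a nonzerodivisor modulo $g_0$ in the UFD $R$.

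\textbf{Nonvanishing.} Suppose $g_0=0$. Then $g_1f_1'=0$ from the syzygy relation $g_0f_0'+g_1f_1'=0$. Since $R$ is a domain and $f_1'\neq 0$, this forces $g_1=0$, so $S=0$, which is impossible. We must also check that $B[u^n\,\ldots\,v^n]^T\neq 0$. This holds because $B$ has rank $2$ and the monomials $u^{n-j}v^j$ are $\K$-linearly independent, so each row of $B$, being nonzero, gives a nonzero polynomial. The same argument gives $g_1\neq 0$.

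\textbf{Coprimality.} Suppose $h=\gcd(g_0,g_1)$ has positive degree $d\geq 1$, and write $g_i=hg_i'$ with $g_i'\in A$ homogeneous of degree $n-d$. Then $h(g_0'f_0'+g_1'f_1')=0$, and since $R$ is a domain, $g_0'f_0'+g_1'f_1'=0$. Hence $[g_0'\ g_1'\ 0\ 0]^T$ is a nonzero syzygy on $I_U$ of bidegree $(0,n-d)$ with $n-d<n$. This contradicts the minimality of $n$ in \Cref{General Setting}. Note that $n-d\geq 0$, and the case $n-d=0$ would give a $\K$-linear dependence between $f_0'$ and $f_1'$, which is also excluded because they form a basis of $V$.

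The only point requiring care is the passage from "coprime in $A$" to "regular sequence in $R$". Concretely, suppose $g_1r=g_0r'$ in $R$. Since $R$ is a UFD and $g_0,g_1$ share no irreducible factor in $R$ (irreducible factors of elements of $A$ stay in $A$, and coprimality in $A$ persists in $A[s,t]$), it follows that $g_0\mid r$. Also $g_0$ is a nonzerodivisor because $R$ is a domain, and $(g_0,g_1)\neq R$ because both are homogeneous of positive degree. The degree-$0$ case $n=0$ is vacuous, since a syzygy in bidegree $(0,0)$ would contradict the linear independence of the $p_i$. This completes the plan; no step should be a serious obstacle.
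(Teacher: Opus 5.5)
Your proof is correct and follows the paper's argument. You get $g_0,g_1\neq 0$ from $\rk B=2$ and the $\K$-linear independence of the monomials $u^{n-j}v^j$, then obtain coprimality from the minimality of $n$ by factoring a common divisor out of $S$; your added justification of the passage from coprimality in $\K[u,v]$ to a regular sequence in $R$ (and of the $d=n$ case) just fills in details the paper leaves implicit.
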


\begin{proof}
Since $\dim V =2$ and $\{f_0',f_1'\}$ is a basis, it follows from (\ref{dim V = 2 - basis transition equation}) that $B$ has rank 2. With this, and since $\{u^n,u^{n-1}v,\ldots,v^n\}$ is $\K$-linearly independent, it follows from (\ref{dim V=2 - g_0,g_1 product equation}) that both $g_0$ and $g_1$ are nonzero. Thus $g_0, g_1$ is a regular sequence if and only if $g_0$ and $g_1$ have no nontrivial common factor. However, if $g_0$ and $g_1$ do share such a factor, it may be factored from $S$ in (\ref{dim V =2 - S defn}) to produce a singly graded syzygy of strictly smaller degree, which contradicts the assumptions of \Cref{General Setting}.    
\end{proof}

Notice that, since $[g_0\,\,g_1]^T$ is a syzygy on $(f_0',f_1')$, it follows that $[f_0'\,\,f_1']^T$ is a syzygy on $(g_0,g_1)$. However, by \Cref{dim V = 2 - g_0 g_1 regular sequence}, this ideal is generated by a regular sequence, hence $[f_0'\,\,f_1']^T$ must be a multiple of its Koszul syzygy. Thus we have
\begin{equation}\label{dim V=2 - f_0' f_1' matrix equation}
    \left[
\begin{array}{c}
     f_0' \\
     f_1'
\end{array}
    \right] = \left[
\begin{array}{c}
     g_1 \\
     -g_0
\end{array}
    \right]\alpha
\end{equation}
for some bihomogeneous $\alpha\in R_{a,b-n}$. In particular, the syzygy $S$ in (\ref{dim V =2 - S defn}) is a \textit{reduced} Koszul syzygy on $I_U$.

Before we construct the necessary additional syzygies on $I_U$, we state a brief containment lemma for homogeneous ideals in two variables. Whereas this is likely well known to experts, we offer a brief proof using the classical Sylvester resultant in the homogeneous setting.

\begin{lemma}\label{Sylvester containment lemma}
Let $f$ and $g$ be homogeneous polynomials in $\K[x,y]$ with $\deg f = m$ and $\deg g =n$. If $\gcd (f,g) = 1$, then there is a containment $(x,y)^{m+n-1} \subseteq (f,g)$.
\end{lemma}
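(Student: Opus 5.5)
We will prove the containment by showing that the $\K$-linear map
$$\Phi:\ \K[x,y]_{n-1}\oplus \K[x,y]_{m-1}\longrightarrow \K[x,y]_{m+n-1},\qquad (h,k)\longmapsto hf+kg,$$
is surjective. This suffices, because $(x,y)^{m+n-1}$ is generated by the monomials of degree $m+n-1$. Every such monomial lies in $\K[x,y]_{m+n-1}$, so if each of them lies in $\im \Phi\subseteq (f,g)$, the containment follows. We may assume $m,n\geq 1$. If, say, $m=0$, then $f$ is a nonzero constant, $(f,g)=\K[x,y]$, and there is nothing to prove.

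The plan is a dimension count. The source has dimension $n+m$, and the target $\K[x,y]_{m+n-1}$ also has dimension $m+n$. So $\Phi$ is a square linear map. In the monomial bases, its matrix is precisely (the transpose of) the homogeneous Sylvester matrix $\Syl(f,g)$, whose determinant is the resultant $\Res(f,g)$. It therefore suffices to show that $\Phi$ is injective, or equivalently that $\Res(f,g)\neq 0$.

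Injectivity is where the hypothesis $\gcd(f,g)=1$ enters, and we argue directly via unique factorization rather than quoting resultant theory. Suppose $hf+kg=0$ with $\deg h\le n-1$ and $\deg k\le m-1$. Then $hf=-kg$. Since $\K[x,y]$ is a UFD and $f,g$ are coprime, $g$ divides $h$. But $\deg h\le n-1<n=\deg g$, and both are homogeneous, so $h=0$. Since $g\neq 0$ and $\K[x,y]$ is a domain, this forces $k=0$. Thus $\Phi$ is injective, hence bijective, and in particular surjective onto $\K[x,y]_{m+n-1}$.

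There is no real obstacle here. The only point needing care is the bookkeeping of degrees: the multipliers must be taken in degrees $n-1$ and $m-1$ so that the source and target dimensions agree, and one must handle the degenerate case where $m$ or $n$ is zero separately, as above. Phrasing $\Phi$ as the Sylvester matrix is optional, but it matches the paper's framing: $\Res(f,g)\neq 0$ exactly when $f$ and $g$ have no common zero on $\P^1$, which over algebraically closed $\K$ is equivalent to $\gcd(f,g)=1$.
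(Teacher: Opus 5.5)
Your proof is correct and is essentially the paper's argument. The paper also observes that the square Sylvester matrix in degree $m+n-1$ is invertible, and multiplies by its inverse to place every monomial of degree $m+n-1$ in $(f)(x,y)^{n-1}+(g)(x,y)^{m-1}$. The only differences are that the paper quotes $\Res_{xy}(f,g)\neq 0$ from $\gcd(f,g)=1$, whereas you prove the equivalent injectivity directly by unique factorization, and that you also dispose of the degenerate case $m=0$ or $n=0$.
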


\begin{proof}
Writing $f= \sum_{i=0}^m a_ix^{m-i}y^i$ and $g = \sum_{i=0}^n b_ix^{n-i}y^i$ for $a_i,b_i \in \K$, we may form the $(m+n)\times (m+n)$ homogeneous \textit{Sylvester matrix} of $f$ and $g$, with respect to the indeterminates $x,y$:
\[
\arraycolsep=2pt\def\arraystretch{0.8}
\Syl_{xy}(f,g)=
\left[
\begin{array}{ccccccc}
a_0 & a_{1} & \cdots & a_m &        &        &        \\
    & a_0     & a_{1}& \cdots & a_m &        &        \\
    &         & \ddots &        &     & \ddots &        \\
    &         &        & a_0    & a_{1} & \cdots & a_m \\[1ex]
b_0 & b_{1} & \cdots & b_n &        &        &        \\
    & b_0     & b_{1}& \cdots & b_n &        &        \\
    &         & \ddots &        &     & \ddots &        \\
    &         &        & b_0    & b_{1} & \cdots & b_n
\end{array}
\right]
\begin{array}{l}
\left.\rule{0pt}{0.95cm}\right\}\;\text{$n$}\\[6pt]
\left.\rule{0pt}{0.95cm}\right\}\;\text{$m$}
\end{array}
\]
With this, notice that 
\[
\arraycolsep=2pt\def\arraystretch{0.8}
\Syl_{xy}(f,g) \left[\begin{array}{c}
    x^{m+n-1}\\
    x^{m+n-2}y\\
    \vdots \\[1ex]
    y^{m+n-1}
\end{array}\right]= \left[\begin{array}{c}
    x^{n-1}f\\
    x^{n-2}y\,f\\
    \vdots\\
    y^{n-1}f\\[1ex]
    \hline\\[-2ex]
    x^{m-1}g\\
    x^{m-2}y\,g\\
    \vdots\\
    y^{m-1}g
\end{array}
\right]
\]
and recall that, since $\gcd (f,g) =1$, the \textit{resultant} of these polynomials, $\Res_{xy}(f,g) =  \det \Syl_{xy}(f,g)$, is nonzero. As $\det \Syl_{xy}(f,g) \neq 0$, this matrix is invertible as it consists of entries in $\K$. Multiplying by this inverse in the matrix equation above shows that 
$$(x,y)^{m+n-1} \subseteq (f)(x,y)^{n-1}+(g)(x,y)^{m-1} \subseteq (f,g)$$
as claimed.
\end{proof}

\begin{rem}\label{power max ideal in (g0,g1)}
Notice that by \Cref{dim V = 2 - g_0 g_1 regular sequence} and \Cref{Sylvester containment lemma} we have $(u,v)^{2n-1}\subseteq (g_0,g_1)$, which will be essential to the proof of this section's main result. We note that if $n=1$, the transition matrix $B$ in (\ref{dim V=2 - g_0,g_1 product equation}) is the identity matrix.  Hence $g_0 =u$ and $g_1=v$, and so the containment $(u,v)^{2n-1}\subseteq (g_0,g_1)$ is clear in this case and the setting of \cite{DS16}. For the case $n=2$, the containment $(u,v)^{2n-1}\subseteq (g_0,g_1)$ was made explicit in \cite{Weaver25}, depending on the choice of basis. In particular, the equations in \cite[4.1]{Weaver25} are precisely the result from multiplying by the inverse of $\Syl_{uv}(g_0,g_1)$.
\end{rem}

We now construct an additional pair of syzygies which, with $S$ in (\ref{dim V =2 - S defn}), will determine the differential in the approximation complex $\Z_{2a-1,b-1}$, and hence yield the implicit equation following \Cref{Botbol implicit eqn}.

\begin{thm}\label{dim V = 2 - main theorem}
With the assumptions of \Cref{General Setting}, assume that $\dim V =2$. The ideal $I_U$ has two additional syzygies $S_1, S_2$ of bidegree $(a,b-n)$ such that $\dim \langle S,S_1,S_2\rangle_{2a-1,b-1}=2ab$. 
\end{thm}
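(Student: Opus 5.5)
The plan is to follow the construction of \cite{DS16}, using \Cref{Sylvester containment lemma} in place of the linear syzygy. By (\ref{dim V=2 - f_0' f_1' matrix equation}) we have $f_0'=g_1\alpha$ and $f_1'=-g_0\alpha$ with $0\neq\alpha\in R_{a,b-n}$. Here $\alpha\neq 0$ because $f_0'\neq 0$.

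\textbf{Step 1: write $p_2,p_3$ in terms of $g_0,g_1$.} Expand $p_2=\sum_{k=0}^a s^{a-k}t^k q_k$ with each $q_k\in\K[u,v]_b$. Since $b\geq 2n-1$, \Cref{power max ideal in (g0,g1)} gives
$$q_k\in (u,v)^b\subseteq (u,v)^{2n-1}\subseteq (g_0,g_1).$$
Because $g_0,g_1$ are homogeneous of degree $n$ in $\K[u,v]$, we can write $q_k=h_{0k}g_0+h_{1k}g_1$ with $h_{ik}\in\K[u,v]_{b-n}$. Summing over $k$ gives
$$p_2=h_0g_0+h_1g_1,\qquad h_0,h_1\in R_{a,b-n},$$
and in the same way $p_3=k_0g_0+k_1g_1$ with $k_0,k_1\in R_{a,b-n}$. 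I expect this to be the only point that needs care. The degree bookkeeping has to show that the coefficients can be chosen bihomogeneous of bidegree $(a,b-n)$, and this is exactly where the hypothesis $b\geq 2n-1$ enters.

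\textbf{Step 2: define the syzygies.} Multiplying by $\alpha$ gives
$$\alpha p_2=h_0(\alpha g_0)+h_1(\alpha g_1)=h_1f_0'-h_0f_1'.$$
Hence, with respect to the generating set $I_U=(f_0',f_1',p_2,p_3)$,
$$S_1=[\,h_1\;\; -h_0\;\; -\alpha\;\; 0\,]^T,\qquad S_2=[\,k_1\;\; -k_0\;\; 0\;\; -\alpha\,]^T$$
are syzygies of bidegree $(a,b-n)$.

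\textbf{Step 3: count dimensions in bidegree $(2a-1,b-1)$.} We have
$$\langle S,S_1,S_2\rangle_{2a-1,b-1}=R_{2a-1,b-1-n}\,S+R_{a-1,n-1}\,S_1+R_{a-1,n-1}\,S_2.$$
Multiplication by $S$ is injective since $g_0\neq 0$, so the first summand has dimension $2a(b-n)$. This is interpreted as $0$ when $b-n=0$, which is consistent because then $R_{2a-1,-1}=0$.

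Each of the other two summands has dimension $\dim R_{a-1,n-1}=an$, since $\alpha\neq 0$ makes multiplication by $S_i$ injective. So it suffices to show that the sum is direct, because then the total is $2a(b-n)+2an=2ab$.

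Suppose $cS+e_1S_1+e_2S_2=0$. The third coordinate gives $-\alpha e_1=0$ and the fourth gives $-\alpha e_2=0$. Since $R$ is a domain, $e_1=e_2=0$, and then $cS=0$ forces $c=0$.

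Combining this with \Cref{Botbol det Z} and \Cref{Botbol implicit eqn}, these syzygies span $(\Z_1)_{2a-1,b-1}$ modulo the kernel. They therefore determine the square $2ab\times 2ab$ representation of $d_1$.
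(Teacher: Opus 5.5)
Your proposal is correct and follows the paper's proof essentially step for step. You use \Cref{Sylvester containment lemma} together with $b\geq 2n-1$ to write $p_2,p_3\in(g_0,g_1)$, build $S_1,S_2$ with $-\alpha$ in the third and fourth coordinates, get independence from the triangular shape of those coordinates, and count $2a(b-n)+an+an=2ab$. The differences are cosmetic: your $h_0$ is the paper's $-q_0$, so your $S_1,S_2$ coincide with the paper's, and you spell out why the coefficients can be taken bihomogeneous of bidegree $(a,b-n)$.
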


\begin{proof}
We follow the approach taken in the proofs of \cite[2.2]{DS16} and \cite[4.3]{Weaver25}. Recall that $I_U = (f_0',f_1',p_2,p_3)$ following \Cref{first dim V generators}, and note that $f_0'=\alpha g_1$ and $f_1'=-\alpha g_0$ by (\ref{dim V=2 - f_0' f_1' matrix equation}). As noted, by \Cref{Sylvester containment lemma} it follows that $(u,v)^{2n-1}\subseteq (g_0,g_1)$, hence $p_2,p_3 \in (g_0,g_1)$ as $I_U$ is generated in bidegree $(a,b)$ and $b\geq 2n-1$. Thus we may write
\begin{equation}\label{p2,p3 decomp}
\left\{\begin{array}{rl}
     p_2 &= q_1g_1-q_0g_0  \\
     p_3 &= r_1g_1-r_0g_0 
\end{array}
\right.
\end{equation}
for some $q_0,q_1,r_0,r_1 \in R_{a,b-n}$. With this, notice that $q_1 f_0'+q_0f_1' - \alpha p_2 =0$, and also $r_1 f_0'+r_0f_1' -\alpha p_3 =0$. Hence both
\begin{equation}\label{dim V = 2 - defn of S1 and S2}
S_1=\begin{bmatrix}
    q_1\\
    q_0\\
-\alpha\\
    0
\end{bmatrix}\qquad\text{and} \qquad S_2=\begin{bmatrix}
    r_1\\
    r_0\\
0\\
    -\alpha
\end{bmatrix}
\end{equation}
are syzygies of $I_U$, with entries in bidegree $(a,b-n)$. 

Combining the syzygies $S$, $S_1$, and $S_2$, the syzygy module of $I_U$ contains the image of
\begin{equation}\label{Dim 2 M syzygy matrix}
M=\begin{bmatrix}
g_0 & q_1 & r_1\\
g_1 & q_0 & r_0\\
0 & -\alpha & 0\\
0 & 0 &-\alpha
\end{bmatrix}_.
\end{equation}
As the submatrix of $M$ consisting of its last three rows is upper triangular, the columns of $M$ span a free $R$-module. As these columns are precisely $S$, $S_1$, and $S_2$, the claim will follow once it has been shown that $M_{2a-1,b-1}$ consists of $2ab$ linearly independent columns, where the independence follows from this observation.

As the bidegrees of the entries of $S$, $S_1$, and $S_2$ are known, we need only count monomials in certain bidegrees to determine how many columns each syzygy contributes to $M_{2a-1,b-1}$. We also note that the number of columns contributed to $M_{2a-1,b-1}$ and a matrix representation of $d_1$ in $\Z_{2a-1,b-1}$ by each syzygy agree. As $S$ has entries in bidegree $(0,n)$, it yields
$$h^0(\O_{\P^1\times \P^1}(2a-1,b-n-1)) = 2a(b-n)$$
columns of $M_{2a-1,b-1}$. Similarly, as both $S_1$ and $S_2$ are syzygies of bidegree $(a,b-n)$, they each give rise to
$$h^0(\O_{\P^1\times \P^1}(a-1,n-1)) = an$$
columns of $M_{2a-1,b-1}$. Summing these figures, we see that $\dim \langle S,S_1,S_2\rangle_{2a-1,b-1} = 2ab$, again noting the independence follows as $\{S,S_1,S_2\}$ spans a free $R$-module.
\end{proof}

\begin{cor}
With the assumptions of \Cref{dim V = 2 - main theorem}, the first differential $d_1$ of the bigraded strand $\Z_{2a-1,b-1}$ of the approximation complex $\Z$ on the generators of $I_U$ is determined by the syzygies $\{S,S_1,S_2\}$. 
\end{cor}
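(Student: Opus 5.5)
The plan is to combine \Cref{syzygies to approx comp} with the dimension count in \Cref{dim V = 2 - main theorem}. Let $N\subseteq \syz(I_U)$ be the submodule generated by $S,S_1,S_2$, that is, the image of $M$ in (\ref{Dim 2 M syzygy matrix}). First I will describe the bigraded strand concretely. Since $S=\K[y_0,\ldots,y_3]$ is concentrated in $R$-bidegree $(0,0)$, the strand $(\Z_1)_{2a-1,b-1}$ is $S\otimes_\K \syz(I_U)_{2a-1,b-1}$. It is therefore a free $S$-module whose rank is $\dim_\K \syz(I_U)_{2a-1,b-1}$. Likewise $(\Z_0)_{2a-1,b-1}=S\otimes_\K R_{2a-1,b-1}$ is free of rank $h^0(\O_{\P^1\times\P^1}(2a-1,b-1))=2ab$. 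A matrix representation of $(d_1)_{2a-1,b-1}$ is then obtained by choosing a $\K$-basis $\{\sigma_k\}$ of $\syz(I_U)_{2a-1,b-1}$. Each $\sigma_k$ is sent to $\sum_i (\sigma_k)_i y_i$, and its coefficients are expanded in the monomial basis of $R_{2a-1,b-1}$.

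Next I will show that $N_{2a-1,b-1}=\syz(I_U)_{2a-1,b-1}$. The proof of \Cref{dim V = 2 - main theorem} gives $\dim_\K N_{2a-1,b-1}=2ab$. Since $U$ is basepoint free, $(H_2)_{4a-1,3b-1}=0$, so by \Cref{Botbol det Z} the matrix of $(d_1)_{2a-1,b-1}$ is square of size $2ab\times 2ab$. This means $\dim_\K \syz(I_U)_{2a-1,b-1}=2ab$. Because $N_{2a-1,b-1}\subseteq \syz(I_U)_{2a-1,b-1}$ and both spaces have dimension $2ab$, they are equal.

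It follows that a $\K$-basis of $\syz(I_U)_{2a-1,b-1}$ can be taken to be the products $mS$, $m'S_1$, $m'S_2$. Here $m$ runs over the monomials of $R_{2a-1,b-n-1}$ and $m'$ over the monomials of $R_{a-1,n-1}$, and these products are linearly independent by the freeness of the columns of $M$. The columns of the matrix of $(d_1)_{2a-1,b-1}$ are then the coefficient vectors of $m\,[y_0\,y_1\,y_2\,y_3]S$ and $m'\,[y_0\,y_1\,y_2\,y_3]S_j$. So $d_1$ on this strand is determined entirely by $\{S,S_1,S_2\}$, and \Cref{Botbol implicit eqn} shows that its determinant equals $F^{\deg\phi_U}$.

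The main point to watch is the identification of the rank of $(\Z_1)_{2a-1,b-1}$ with $2ab$. This rests on reading \Cref{Botbol det Z} as giving equality of the ranks of the source and target, rather than only saying that some presentation of the map is square. If that reading fails, I would instead argue directly that $\dim \syz(I_U)_{2a-1,b-1}=2ab$. For this I would use the exact sequence $0\to\syz(I_U)\to R(-a,-b)^4\to R$ and the vanishing $(I_U)_{3a-1,2b-1}=R_{3a-1,2b-1}$ for basepoint-free $U$ in this degree. That approach would need a regularity argument, which is why I prefer to cite \Cref{Botbol det Z}.
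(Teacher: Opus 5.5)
Your proposal is correct and follows the paper's argument. The paper's one-line proof combines the count $\dim \langle S,S_1,S_2\rangle_{2a-1,b-1}=2ab$ from \Cref{dim V = 2 - main theorem} with the fact from \Cref{Botbol det Z} that, for basepoint-free $U$, a matrix representation of $(d_1)_{2a-1,b-1}$ is square of size $2ab\times 2ab$. Your step that $N_{2a-1,b-1}=\syz(I_U)_{2a-1,b-1}$ by comparing dimensions simply makes explicit what the paper leaves implicit.
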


\begin{proof}
This follows from \Cref{dim V = 2 - main theorem}, \Cref{Botbol det Z}, and \Cref{Botbol implicit eqn}, noting that $\dim \langle S,S_1,S_2\rangle_{2a-1,b-1}$ is precisely the number of columns contributed by these syzygies to a matrix representation of $d_1$.
\end{proof}

In particular, following \Cref{Botbol implicit eqn}, the determinant of the $2ab\times 2ab$ matrix representation of this differential is a power of the implicit equation of $X_U$. We refer the reader to examples \cite[1.3]{DS16} and \cite[4.5]{Weaver25} for a detailed illustration of this process, in the case that $n=1,2$. Moreover, we also offer a complete example, in the setting that $\dim V=4$ at the end of \Cref{dim 4 section}.

\begin{rem}
 We note that \Cref{dim V = 2 - main theorem} agrees with \cite[2.2]{DS16} and \cite[4.3]{Weaver25} if $n=1,2$. It is also worth mentioning that, as stated, the conjecture posed in \cite[6.4]{Weaver25} is not quite correct. Indeed, one requires that $b\geq 2n-1$ (and not $b\geq n+1$) to have the containment $(u,v)^{2n-1}\subseteq (g_0,g_1)$, which was essential to the proof above. Moreover, we note that when $n=1$, it is possible to have $b=1$, differing slightly from the setting of \cite{DS16}. Interestingly, when $n=b=1$, the syzygy $S$ is not necessary to describe the differential of $\Z_{2a-1,b-1}= \Z_{2a-1,0}$, and the two syzygies $S_1,S_2$ in bidegree $(a,0)$ are sufficient. 
\end{rem}

The case that $\dim V=2$ leads to relatively simple syzygies on $I_U$. Indeed, as noted, $S$ may be taken to be a reduced Koszul syzygy and the construction of $S_1$ and $S_2$ is relatively straightforward. Much of this is due to the fact that $g_0, g_1$ is a regular sequence, following \Cref{dim V = 2 - g_0 g_1 regular sequence}. Whereas this will fail to be the case in the following sections, we will continue to use the syzygetic behavior of the ideal of entries of $S$ when $\dim V=3,4$.

We also note that we employ a slightly different sign convention here, compared to the previous work of \cite{DS16,Weaver25}, the purpose of which will become more apparent in the following sections. Notice that the equations of (\ref{p2,p3 decomp}) may be written as the matrix products
\[
p_2= [g_1\,\, -g_0] \begin{bmatrix}
    q_1\\q_0
\end{bmatrix}_, \quad\quad\quad 
p_3= [g_1\,\, -g_0] \begin{bmatrix}
    r_1\\r_0
\end{bmatrix}_,
\]
involving the transposed syzygy in (\ref{dim V=2 - f_0' f_1' matrix equation}). Moreover, we note that these columns appear in the syzygies $S_1$ and $S_2$ in (\ref{dim V = 2 - defn of S1 and S2}). While not imperative in this setting, it will be necessary to express certain polynomials as matrix products involving syzygies when $\dim V$ is larger, in the following sections.


\section{Syzygies in the case $\dim V=3$}\label{dim 3 section}

We now consider the second case of \Cref{Dimension of V} when the vector space $V$ associated to the syzygy $S$ in \Cref{General Setting} has dimension $3$. We note this setting was briefly studied for $n=2$ in \cite{Weaver25}. Moreover, following \Cref{low n dimension remark}, $\dim V =3$ only if $n\geq 2$. In particular, we aim to recover and extend this previous work.

With the assumptions of \Cref{General Setting} and the syzygy $S$, we may form the set $\{f_0,\ldots,f_n\}$ as in (\ref{Syzygy rearrangement}). As $\dim V=3$, there is a subset $\{f_0',f_1',f_2'\}$ among this set forming a basis for $V$. By \Cref{first dim V generators}, and after possibly rearranging, we may assume that $I_U = (f_0',f_1',f_2',p_3)$. As noted in \Cref{B transition matrix and syzygies remark}, there exists a $3\times (n+1)$ transition matrix $B$ with entries in $\K$ such that  
\begin{equation}\label{dim 3 transition matrix equation}
[f_0'\, \,f_1'\,\,f_2']\, B= [f_0\,\ldots\,f_n].
\end{equation}
Additionally, recall from (\ref{Syzygy rearrangement}) that  $[u^n\, u^{n-1}v\, \ldots \,v^n]^T$ is a syzygy on $(f_0,\ldots,f_n)$, hence
\begin{equation}\label{dim 3 - syzygy on subideal}
B\, [u^n\, u^{n-1}v\, \ldots \,v^n]^T = \begin{bmatrix}
    g_0\\
    g_1\\
    g_2
\end{bmatrix}    
\end{equation}
 is a syzygy on $(f_0',f_1',f_2')$, for some $g_0,g_1,g_2 \in \K[u,v]$. Thus by \Cref{B transition matrix and syzygies remark} and writing $I_U = (f_0',f_1',f_2',p_3)$, the syzygy $S$ in bidegree $(0,n)$ in \Cref{General Setting} is precisely
\begin{equation}\label{dim 3 -- S defn}
    S=\begin{bmatrix}
    g_0\\
    g_1\\
    g_2\\
    0
\end{bmatrix}_.    
\end{equation}

Since the syzygy in (\ref{dim 3 - syzygy on subideal}) consists of three polynomials in $\K[u,v]$, it is impossible for these entries to form a regular sequence, as in the previous section. However, this ideal of entries does admit a prescribed syzygetic structure.

\begin{prop}\label{dim V = 3 - (g_0 g_1 g_2) Artinian so HB G2P}
    With the assumptions of \Cref{General Setting}, assume that $\dim V =3$. With (\ref{dim 3 - syzygy on subideal}), the ideal of entries $(g_0, g_1, g_2)$ is an Artinian ideal of $A=\K[u,v]$ with minimal free resolution
    \begin{equation}\label{dim 3 - res of (g0,g1,g2)}
 0 \longrightarrow \begin{array}{c}
 A(-n-\mu)\\
 \oplus \\
A(-2n+\mu)
\end{array} \overset{\psi}{\longrightarrow} \,
A^3(-n)\overset{[g_0\,\,g_1\,\,g_2]}{\longerrightarrow} \,
A
\end{equation}
for some $\mu\geq 1$, and we may assume $\mu\leq n-\mu$. Moreover, for $i=0,1,2$, we have $g_i = (-1)^i\det \psi_i$ where $\psi_i$ denotes the submatrix obtained by deleting row $i$ of $\psi$.
\end{prop}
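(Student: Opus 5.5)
We first show that $g_0,g_1,g_2$ are $\K$-linearly independent and have no common factor, and then apply Hilbert--Burch. Since $\dim V=3$, the $3\times(n+1)$ matrix $B$ in (\ref{dim 3 transition matrix equation}) has rank $3$. The monomials $u^n,\ldots,v^n$ are $\K$-linearly independent, so (\ref{dim 3 - syzygy on subideal}) shows that $g_0,g_1,g_2$ are $\K$-linearly independent forms of degree $n$. In particular each $g_i$ is nonzero.

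Next, suppose $h=\gcd(g_0,g_1,g_2)$ has positive degree. Then $S/h$ is a syzygy on $I_U$ in bidegree $(0,n-\deg h)$, which contradicts the minimality in \Cref{General Setting}, exactly as in \Cref{dim V = 2 - g_0 g_1 regular sequence}. Since $A=\K[u,v]$ is a UFD of dimension two, an ideal whose generators have no common factor has height two. Hence $(g_0,g_1,g_2)$ is $(u,v)$-primary, i.e.\ Artinian.

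Now let $J=(g_0,g_1,g_2)$. Because $\dim A=2$ and $J$ has height two, $A/J$ is Cohen--Macaulay of codimension two, so $J$ is perfect and has projective dimension two by Auslander--Buchsbaum. The $g_i$ are linearly independent of the same degree $n$, so they minimally generate $J$. The minimal resolution therefore has the form
$$0\to F_2\overset{\psi}{\to} A^3(-n)\to A\to A/J\to 0,$$
and the rank count gives $\operatorname{rk}F_2=2$. Write $F_2=A(-d_1)\oplus A(-d_2)$. The Hilbert--Burch theorem then gives a unit $c\in\K$ (for degree reasons) with $g_i=c(-1)^i\det\psi_i$. Rescaling one column of $\psi$ lets us take $c=1$.

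It remains to pin down the twists. The maximal minors of $\psi$ have degree $(d_1-n)+(d_2-n)$, and this must equal $n$, so $d_1+d_2=3n$. Set $\mu=d_1-n$; then $d_2=2n-\mu$. Minimality forces no nonzero constant entries in $\psi$, so $\mu\geq 1$ and $n-\mu\geq 1$. Reordering the two columns, we may assume $\mu\le n-\mu$.

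The step needing the most care is the Artinian claim, namely that height two follows from having no common factor. This uses only that $A$ is a two-dimensional UFD: if $J$ were contained in a height-one prime $(h)$, then $h$ would divide every $g_i$. A second point to watch is confirming that minimality excludes degree-zero syzygies, which follows from the linear independence of the $g_i$. This gives $\mu\ge1$ and $n-\mu\ge1$, since a degree-$0$ column of $\psi$ would be a $\K$-linear relation among the $g_i$.
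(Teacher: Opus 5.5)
Your proof is correct and takes essentially the same route as the paper. The rank of $B$ gives that the $g_i$ are nonzero and $\K$-linearly independent, minimality of $n$ rules out a common factor so the ideal has height two, and Hilbert--Burch together with minimal generation gives the resolution; you also make explicit the computation $d_1+d_2=3n$ and $1\leq\mu\leq n-1$, which the paper leaves implicit.

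One small correction: the Hilbert--Burch multiplier $c$ is a unit because $(g_0,g_1,g_2)=c\cdot I_2(\psi)\subseteq(c)$ has height two, not ``for degree reasons.'' Your later count $\deg\det\psi_i=n$ already presupposes that $c$ is constant. Alternatively, you can obtain $d_1+d_2=3n$ independently from the Hilbert series, as in \Cref{HB degree shift lemma}.
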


\begin{proof}
For the initial claim, we repeat the argument in the proof of \Cref{dim V = 2 - g_0 g_1 regular sequence}. As $\dim V =3$ and $\{f_0',f_1', f_2'\}$ is a basis, the matrix $B$ in (\ref{dim 3 transition matrix equation}) has rank $3$. Moreover, since $\{u^n,u^{n-1}v,\ldots,v^n\}$ is $\K$-linearly independent, from (\ref{dim 3 - syzygy on subideal}) it follows that $g_i\neq 0$ for $i=0,1,2$. Hence it suffices to show that $\hgt (g_0,g_1,g_2)\neq 1$, namely that $g_0,g_1,g_2$ have no common factor. However, any nontrivial shared factor could be factored from $S$ in (\ref{dim 3 -- S defn}) to produce a syzygy of smaller degree, which contradicts the assumptions of \Cref{General Setting}. Thus $\hgt (g_0,g_1,g_2) =2$, and so this is an Artinian ideal of $A$.

Now that $(g_0,g_1,g_2)$ is seen to be a Cohen-Macaulay ideal of codimension two, by the Hilbert-Burch theorem \cite[20.15]{Eisenbud} it must have a resolution with the shape of (\ref{dim 3 - res of (g0,g1,g2)}), and we also have $g_i = (-1)^i\det \psi_i$ for $i=0,1,2$. The last item which must be verified is that the resolution in (\ref{dim 3 - res of (g0,g1,g2)}) is minimal, for which we need only show that $(g_0,g_1,g_2)$ is minimally generated. Since $g_0,g_1,g_2$ are homogeneous of the same degree, it suffices to show that $\{g_0,g_1,g_2\}$ is $\K$-linearly independent. However, this follows easily from (\ref{dim 3 - syzygy on subideal}) as $\{u^n, u^{n-1}v, \ldots ,v^n\}$ is $\K$-linearly independent and $\rk B=3$.
\end{proof}

With (\ref{dim 3 - res of (g0,g1,g2)}), we note that $\psi$ is the syzygy matrix of $(g_0,g_1,g_2)$, with columns $\psi= [C_1\,|\, C_2]$ consisting of entries of degree $\mu$ and $n-\mu$, respectively.

\begin{rem}\label{dim 3 f' decomp}
Since $[g_0\, \,g_1\,\,g_2]^T$ is a syzygy on $(f_0',f_1',f_2')$, we note that $[f_0'\, \,f_1'\,\,f_2']^T$ is a syzygy on the ideal $(g_0,g_1,g_2)$. As such, $[f_0'\, \,f_1'\,\,f_2']^T$ belongs to the image of the syzygy matrix $\psi$, in \Cref{dim V = 3 - (g_0 g_1 g_2) Artinian so HB G2P}. Thus from (\ref{dim 3 - res of (g0,g1,g2)}) and noting that $f_0',f_1',f_2' \in R_{a,b}$, we have
\begin{equation}\label{dim 3 - f alpha decomposition}
\left[\begin{array}{c}
     f_0'  \\[0.5ex]
     f_1'\\[0.5ex]
     f_2'
\end{array}\right] = \psi \left[\begin{array}{c}
    \alpha_1\\
    \alpha_2
\end{array}\right]
\end{equation}
for some bihomogeneous $\alpha_1 \in R_{a,b-\mu}$ and $\alpha_2\in R_{a,b-n+\mu}$.
\end{rem}

With the resolution of $(g_0,g_1,g_2)$ in (\ref{dim 3 - res of (g0,g1,g2)}), write $\psi = [C_1\,|\,C_2]$ once more to denote the  columns of this syzygy matrix. As noted in \Cref{dim 3 f' decomp}, the resolution in (\ref{dim 3 - res of (g0,g1,g2)}) is minimal, hence neither $C_1$ nor $C_2$ can have unit entries. Writing $I(C_1)$ and $I(C_2)$ to denote the ideal of entries of these columns, recall from \Cref{dim V = 3 - (g_0 g_1 g_2) Artinian so HB G2P} that $(g_0,g_1,g_2)$ is determinantal and is hence contained in both $I(C_1)$ and $I(C_2)$. Hence both $I(C_1)$ and $I(C_2)$ are proper ideals of $A=\K[u,v]$ with maximal codimension.

\begin{rem}\label{I(C1) and I(C2) HB resolutions}
 From the discussion above, both $I(C_1)$ and $I(C_2)$ admit Hilbert-Burch resolutions, and hence may be realized as determinantal ideals of almost-square matrices. Letting $\varphi_1$ and $\varphi_2$ denote the $3\times 2$ Hilbert-Burch syzygy matrices of $I(C_1)$ and $I(C_2)$, we have free resolutions 
 \begin{equation}\label{res I(C1) and I(C2)}
 0 \longrightarrow \begin{array}{c}
 A(-\mu-\nu_1)\\
 \oplus \\
A(-2\mu+\nu_1)
\end{array} \overset{\varphi_1}{\longrightarrow} \,
A^3(-\mu)\overset{C_1^T}{\longrightarrow} \,
A,\quad\quad 
0 \longrightarrow \begin{array}{c}
 A(-n+\mu-\nu_2)\\
 \oplus \\
A(-2n+2\mu+\nu_2)
\end{array} \overset{\varphi_2}{\longrightarrow} \,
A^3(-n+\mu)\overset{C_2^T}{\longrightarrow} \,
A, 
\end{equation}
for some $\nu_1\leq \mu$ and $\nu_2\leq n-\mu$. Moreover, the entries of $C_1$ and $C_2$ are precisely the signed minors of $\varphi_1$ and $\varphi_2$, with the same conventions of \Cref{dim V = 3 - (g_0 g_1 g_2) Artinian so HB G2P}. 
\end{rem}

With this observation, we begin the search for additional syzygies on $I_U$. First however, we note that $\alpha_1$ and $\alpha_2$ may be written in terms of the syzygy matrices in (\ref{res I(C1) and I(C2)}).

\begin{prop}\label{dim 3 alpha decomp}
With the assumptions of \Cref{General Setting}, assume that $\dim V=3$. There exist bihomogeneous polynomials $q_0\in R_{a,b-n-\nu_1}$ and $q_1 \in R_{a,b-\mu-n+\nu_1}$ such that $\alpha_1 = C_2^T\varphi_1 [q_0\,\,q_1]^T$. Moreover, there exist $r_0\in R_{a,b-n-\nu_2}$ and $r_1\in R_{a, b-2n+\mu+\nu_2}$ such that $\alpha_2 = C_1^T\varphi_2 [r_0\,\,r_1]^T$.
\end{prop}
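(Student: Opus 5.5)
The plan is to show that $\alpha_1$ lies in the extension to $R$ of an ideal of $A=\K[u,v]$ generated by two coprime forms of small total degree, and then to apply \Cref{Sylvester containment lemma}, using $b\geq 2n-1$ for the degree bound. The same argument, with the roles of $C_1$ and $C_2$ swapped, will handle $\alpha_2$.

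Write $[h_0\,\,h_1]=C_2^T\varphi_1$, a row of two forms in $A$. The columns of $\varphi_1$ have degrees $\nu_1$ and $\mu-\nu_1$, and $C_2$ has degree $n-\mu$. Hence $\deg h_0=n-\mu+\nu_1$, $\deg h_1=n-\nu_1$, and $\deg h_0+\deg h_1=2n-\mu$.

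The key step, and the one I expect to be the main obstacle, is to show that $h_0,h_1$ are nonzero with $\gcd(h_0,h_1)=1$. I would argue by contradiction. Suppose $h_0,h_1$ both vanish at some point $P\in\P^1$ (if either is identically zero, take $P$ general). Then $C_2(P)$ is orthogonal to both columns of $\varphi_1(P)$. Three facts about $\varphi_1(P)$ and $C_1(P)$ follow from earlier results:
\begin{itemize}
\item $C_1(P)\neq 0$, since $I(C_1)$ is Artinian by \Cref{I(C1) and I(C2) HB resolutions}.
\item $\varphi_1(P)$ has rank $2$, since its signed $2\times 2$ minors are the entries of $C_1$.
\item $C_1^T\varphi_1=0$, so $C_1(P)$ is orthogonal to the columns of $\varphi_1(P)$.
\end{itemize}
The orthogonal complement of the column space of $\varphi_1(P)$ in $\K^3$ is therefore the line spanned by $C_1(P)$. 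So $C_2(P)$ is proportional to $C_1(P)$, and every $2\times 2$ minor of $\psi(P)=[C_1(P)\,|\,C_2(P)]$ vanishes. By \Cref{dim V = 3 - (g_0 g_1 g_2) Artinian so HB G2P}, this gives $g_0(P)=g_1(P)=g_2(P)=0$, contradicting that $(g_0,g_1,g_2)$ is Artinian.

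With coprimality in hand, \Cref{Sylvester containment lemma} gives $(u,v)^{2n-\mu-1}\subseteq(h_0,h_1)$ in $A$. Now $\alpha_1\in R_{a,b-\mu}$, and $b-\mu\geq 2n-\mu-1$ because $b\geq 2n-1$. Writing $\alpha_1$ as a $\K[s,t]_a$-combination of monomials $u^iv^{b-\mu-i}$, each monomial lies in $(h_0,h_1)$. So $\alpha_1=h_0q_0+h_1q_1=C_2^T\varphi_1[q_0\,\,q_1]^T$, and comparing bidegrees gives $q_0\in R_{a,b-n-\nu_1}$ and $q_1\in R_{a,b-\mu-n+\nu_1}$.

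For $\alpha_2$, set $[k_0\,\,k_1]=C_1^T\varphi_2$. These have degrees $\mu+\nu_2$ and $n-\nu_2$, with sum $n+\mu$. The same orthogonality argument, with $C_1,C_2,\varphi_1$ replaced by $C_2,C_1,\varphi_2$, shows $\gcd(k_0,k_1)=1$. Then $(u,v)^{n+\mu-1}\subseteq(k_0,k_1)$, and $\alpha_2\in R_{a,b-n+\mu}$ with $b-n+\mu\geq n+\mu-1$. This yields $\alpha_2=C_1^T\varphi_2[r_0\,\,r_1]^T$ with $r_0\in R_{a,b-n-\nu_2}$ and $r_1\in R_{a,b-2n+\mu+\nu_2}$.
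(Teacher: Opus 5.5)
Your proposal is correct. Outside the coprimality step it matches the paper's proof: the same factorization $[h_0\,\,h_1]=C_2^T\varphi_1$, the same degree count $\deg h_0+\deg h_1=2n-\mu$, the same use of \Cref{Sylvester containment lemma} with $b\geq 2n-1$, and the same bidegree bookkeeping for $q_0,q_1,r_0,r_1$.

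The difference is in how you prove $\gcd(h_0,h_1)=1$. The paper argues globally. It writes $h_i=h_i'G$ with $G=\gcd(h_0,h_1)$ and observes that $\varphi_1[h_1'\,\,-h_0']^T$ is a syzygy of both $C_1^T$ and $C_2^T$, hence lies in $\ker\psi^T$. It then dualizes the Hilbert--Burch resolution of $(g_0,g_1,g_2)$ and applies the Buchsbaum--Eisenbud criterion to identify $\ker\psi^T$ with the span of $[g_0\,\,g_1\,\,g_2]^T$. Since that vector has degree $n-\deg G$ while the $g_i$ have degree $n$, it follows that $\deg G=0$.

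You instead work at a hypothetical common zero $P\in\P^1$ and use linear algebra over $\K$ to force $C_2(P)\in\K\, C_1(P)$, hence $g_0(P)=g_1(P)=g_2(P)=0$. Your route is more elementary: it needs only that $I(C_1)$ and $(g_0,g_1,g_2)$ have no zeros on $\P^1$ and the Hilbert--Burch minor identities, with no dualization or acyclicity criterion. It also covers the degenerate case $C_2^T\varphi_1=0$, which the paper's gcd factorization does not address explicitly. It relies on $\K$ being algebraically closed, which is part of the setting; the paper itself uses essentially your pointwise idea, via reduction modulo a linear form, in \Cref{C_i^Tphi_j Artinian so HB}. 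The paper's degree-count argument, for its part, is the template it reuses for \Cref{dim 4 - alpha decomp}.

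One small repair is needed. The parenthetical ``if either is identically zero, take $P$ general'' fails when exactly one of $h_0,h_1$ vanishes identically, because a general $P$ will not be a root of the other. In that case take $P$ to be a root of the nonzero form. Such a root exists because $\deg h_0\geq n-\mu\geq 1$ and $\deg h_1\geq n-\mu\geq 1$, and likewise $\deg k_0,\deg k_1\geq \mu\geq 1$. These positive degrees are also what make ``no common zero on $\P^1$'' equivalent to ``both nonzero and coprime,'' which is the hypothesis \Cref{Sylvester containment lemma} requires.
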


\begin{proof}
We first note that these degrees are nonnegative, which follows as $n\geq 2$ by \Cref{low n dimension remark}, $b\geq 2n-1$ by assumption in \Cref{General Setting}, and since $\nu_1\leq \mu$, $\nu_2\leq n-\mu$, and $1\leq \mu \leq n-1$ by \Cref{I(C1) and I(C2) HB resolutions} and \Cref{dim V = 3 - (g_0 g_1 g_2) Artinian so HB G2P}. We prove the assertion for $\alpha_1$ as the claim for $\alpha_2$ follows similarly.

We first show that the entries of $C_2^T\varphi_1$ are coprime. Recall that the entries of these matrices are homogeneous polynomials in $A=\K[u,v]$. From (\ref{dim 3 - res of (g0,g1,g2)}) we have that $C_2$ consists of entries in degree $n-\mu$ and by (\ref{res I(C1) and I(C2)}), $\varphi_1$ consists of entries in degree $\nu_1$ and $\mu-\nu_1$ in the first and second column, respectively. With this, we may write $C_2^T\varphi_1 = [h_0\,\,h_1]$ where $h_0, h_1$ are homogeneous polynomials in $A$ with $\deg h_0 = n-\mu+\nu_1$ and $\deg h_1= n- \nu_1$. To show that $h_0$ and $h_1$ are coprime, write $G= \gcd (h_0,h_1)$ with $d=\deg G$. As such, there exist homogeneous $h_0',h_1'\in A$ such that $h_0 = h_0'G$ and $h_1=h_1'G$.

With this observation, notice that
$$ 0 = [h_0\,\,h_1]\begin{bmatrix}
    h_1'\\
    -h_0'
\end{bmatrix} = C_2^T\varphi_1 \begin{bmatrix}
    h_1'\\
    -h_0'
\end{bmatrix}$$
and so it follows that $\varphi_1 [h_1'\,\,-h_0']^T$ is a syzygy on $C_2^T$. However, recall that $\varphi_1$ is the syzygy matrix of $C_1^T$ following \Cref{I(C1) and I(C2) HB resolutions}, hence $\varphi_1 [h_1'\,\,-h_0']^T$ is a syzygy on $C_1^T$ as well. As $C_1$ and $C_2$ are the columns of $\psi$, it follows that $\varphi_1 [h_1'\,\,-h_0']^T \in \ker \psi^T$. Applying the functor $\Hom_A(-,A)$ and dualizing (\ref{dim 3 - res of (g0,g1,g2)}), one obtains the complex
\begin{equation}\label{dim 3 - dualized res of (g0,g1,g2)}
 0 \longrightarrow 
 A
 \overset{[g_0\,\,g_1\,\,g_2]^T}{\longerrightarrow}
 A^3(n)
 \overset{\psi^T}{\longrightarrow}
 \begin{array}{c}
 A(n+\mu)\\
 \oplus \\
A(2n-\mu)
\end{array}
\end{equation}
which remains exact by the Buchsbaum-Eisenbud acyclicity criterion \cite[Cor. 1]{BE73} and \Cref{dim V = 3 - (g_0 g_1 g_2) Artinian so HB G2P}. Hence $\ker \psi^T = \im [g_0\,\,g_1\,\,g_2]^T$, and so $\varphi_1 [h_1'\,\,-h_0']^T$ is a multiple of $[g_0\,\,g_1\,\,g_2]^T$. However, the entries of $\varphi_1 [h_1'\,\,-h_0']^T$ have degree $n-d$, where $d=\deg G$, and $g_0,g_1,g_2$ have degree $n$. Thus it follows that $d=0$, i.e. $G= \gcd(h_0,h_1)$ is a unit. Hence the entries of $C_2^T\varphi_1 = [h_0\,\,h_1]$ are indeed coprime.

With this, we verify the original assertion for $\alpha_1$. Writing $C_2^T\varphi_1 = [h_0\,\,h_1]$ once more, recall that $h_0, h_1$ are homogeneous polynomials in $A$ with $\deg h_0 = n-\mu+\nu_1$ and $\deg h_1= n- \nu_1$. Hence by \Cref{Sylvester containment lemma}, we have $(u,v)^{2n-\mu-1} \subseteq (h_0,h_1)$ in $A$, since $\gcd(h_0,h_1)=1$. Thus in $R$, since $\alpha_1 \in R_{a,b-\mu}$ and $b\geq 2n-1$, we have 
$$\alpha_1 \in (u,v)^{b-\mu} \subseteq (u,v)^{2n-\mu-1} \subseteq (h_0,h_1).$$
Hence we may write $\alpha_1 = q_0h_0+q_1h_1 = C_2^T\varphi_1 [q_0\,\,q_1]^T$ for some $q_0\in R_{a,b-n-\nu_1}$ and $q_1 \in R_{a,b-\mu-n+\nu_1}$. A similar argument verifies the claim for $\alpha_2$. 
\end{proof}

With this, we will show that the subideal $J=(f_0',f_1',f_2')\subseteq I_U$ is a perfect $R$-ideal of grade two, and hence has a Hilbert-Burch resolution. In particular, $\syz(J)$ then has only one additional generator, in addition to (\ref{dim 3 - syzygy on subideal}), which may be extended to a syzygy on $I_U$. First however, we state a couple of necessary lemmas, the first of which will be particularly useful for tensor product surfaces free of basepoints.

\begin{lemma}[{\cite[V.1.4.3]{Hartshorne}}]\label{Hartshorne lemma}
With $R=\K[s,t,u,v]$ as in \Cref{General Setting}, let $f\in R_{a,b}$ and $g\in R_{c,d}$ be bihomogeneous polynomials with $\gcd(f,g)=1$. The curves $V(f)$ and $V(g)$ in $\P^1\times \P^1$ intersect at $ad+bc$ points, counting multiplicity.
\end{lemma}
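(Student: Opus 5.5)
The plan is to compute the length of the zero-dimensional scheme $Z=V(f)\cap V(g)$ as an Euler characteristic, using the Koszul complex of the pair $f,g$ on $\P^1\times\P^1$. We regard $f$ and $g$ as global sections of $\O_{\P^1\times\P^1}(a,b)$ and $\O_{\P^1\times\P^1}(c,d)$, writing $\O(p,q)$ for $\O_{\P^1\times\P^1}(p,q)$. The number of intersection points counted with multiplicity is then $\sum_{x\in Z}\dim_\K \O_{Z,x}=h^0(\O_Z)=\chi(\O_Z)$.

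\textbf{Step 1: $Z$ is finite and locally a complete intersection.} Since $\gcd(f,g)=1$ in $R$, the curves $V(f)$ and $V(g)$ share no common component. Indeed, an irreducible component of $\P^1\times\P^1$ of codimension one is cut out by an irreducible bihomogeneous polynomial, because the bihomogeneous class group is generated by the rulings and $R$ is a UFD. A common component would therefore give a common factor of $f$ and $g$. Hence $Z$ is a finite set.

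At each point $x$, the local ring $\O_{x}$ is a two-dimensional regular local ring, hence a UFD. The local equations of $f$ and $g$ have no common factor there, since any common factor would define a common curve through $x$. In a two-dimensional UFD, two coprime nonunits form a regular sequence. Consequently the Koszul complex
$$0\to \O(-a-c,-b-d)\xrightarrow{\left[\begin{smallmatrix} g\\ -f\end{smallmatrix}\right]} \O(-a,-b)\oplus\O(-c,-d)\xrightarrow{[f\;\,g]}\O\to\O_Z\to 0$$
is exact as a sequence of sheaves.

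\textbf{Step 2: Euler characteristic count.} Additivity of $\chi$ on this exact sequence gives
$$\chi(\O_Z)=\chi(\O)-\chi(\O(-a,-b))-\chi(\O(-c,-d))+\chi(\O(-a-c,-b-d)).$$
We use the K\"unneth formula together with $\chi(\O_{\P^1}(p))=p+1$, which holds for all integers $p$. This yields $\chi(\O(p,q))=(p+1)(q+1)$, so
$$\chi(\O_Z)=1-(1-a)(1-b)-(1-c)(1-d)+(1-a-c)(1-b-d).$$
Expanding the right-hand side, every term cancels except $ad+bc$.

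\textbf{Main obstacle.} The only step requiring care is Step 1, namely passing from global coprimality in $R$ to local regularity of $f,g$ at every point, which is what makes the Koszul complex exact. The argument given there, through the absence of a common component and the UFD property of regular local rings, should suffice.

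An alternative route is to work in $\mathrm{Pic}(\P^1\times\P^1)=\mathbb{Z}L_1\oplus\mathbb{Z}L_2$ with $L_1^2=L_2^2=0$ and $L_1\cdot L_2=1$. Here $V(f)\sim aL_1+bL_2$, so $V(f)\cdot V(g)=ad+bc$. The no-common-component condition then identifies this intersection number with the local count.
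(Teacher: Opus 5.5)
Your proof is correct, but its main argument is not the route the paper relies on. The paper gives no proof of its own; it cites Hartshorne's Example V.1.4.3, and that is essentially your closing ``alternative route''. There one uses the intersection pairing on $\mathrm{Pic}(\P^1\times\P^1)=\mathbb{Z}L_1\oplus\mathbb{Z}L_2$ and reads off $L_1^2=L_2^2=0$ and $L_1\cdot L_2=1$ from the rulings: lines in the same family are disjoint, and lines in opposite families meet transversally in one point. Bilinearity then gives $V(f)\cdot V(g)=ad+bc$. Finally, Hartshorne's Proposition V.1.4 identifies this number with $\sum_x \dim_\K \O_x/(f_x,g_x)$ when the curves share no component.

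Your main argument bypasses the existence and uniqueness theorem for the pairing, including its Bertini-type moving lemmas. The Koszul resolution of $\O_Z$ together with K\"unneth computes the total length directly. Your Step 2 is exactly the Euler-characteristic expression $\chi(\O)-\chi(L^{-1})-\chi(M^{-1})+\chi(L^{-1}\otimes M^{-1})$ for the intersection number, evaluated on $\P^1\times\P^1$.

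What your version buys is self-containedness: it needs only line-bundle cohomology on $\P^1\times\P^1$ and local algebra. The Pic route is shorter once the general theory is available, and it gives bilinearity and invariance under linear equivalence for free. Its passage to local multiplicities, however, rests on the same input as your Step 1: no common component implies a local regular sequence.

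A few small points of hygiene:
\begin{itemize}
\item At points $x\notin Z$ one of $f_x,g_x$ is a unit, so your ``coprime nonunits'' clause does not apply there. The Koszul complex is then split exact and $\O_{Z,x}=0$, so exactness still holds.
\item Two coprime elements of any UFD that generate a proper ideal form a regular sequence, so ``two-dimensional'' is not needed.
\item What does the work in Step 1 is not that the class group is generated by the rulings. It is that the bihomogeneous ideal of an irreducible curve is a height-one prime of the UFD $R$, hence principal with an irreducible bihomogeneous generator $h$. Then $V(h)\subseteq V(f)$ forces $h\mid f$.
\end{itemize}
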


As $U$ is basepoint-free, \Cref{Hartshorne lemma} implies that the only proper two-generated ideals containing $I_U$ belong to either the subring $\K[u,v]$ or $\K[s,t]$. Hence \Cref{Hartshorne lemma} will prove useful for the duration of the article. The second lemma we require is less general, however it will be used multiple times throughout.

\begin{lemma}\label{Linear algebra lemma - minors of minors}
Let $A=(a_{ij})$ be a $3\times 2$ matrix with entries in a ring $R$. Write $\delta_i = (-1)^i\det A_i$ where $A_i$ denotes the submatrix of $A$ obtained by deleting row $i$, for $i=0,1,2$. For a $3\times 1$ column $D$ with entries in $R$, consider the $3\times 2$ matrix
\[
B=\left[\begin{array}{c|c}
    & \delta_0  \\
    D&\delta_1 \\
    &\delta_2
\end{array}\right]_.
\]
Similarly, writing $\Delta_i = (-1)^i\det B_i$, we have
\[
\Delta_i = \det \left[\begin{array}{cc}
    a_{i0} & a_{i1} \\[0.5ex]
    D^TC_1 & D^TC_2 
\end{array}\right]
\]
for $i=0,1,2$, where $C_1$ and $C_2$ denote the columns of $A=[C_1\,|\,C_2]$.
\end{lemma}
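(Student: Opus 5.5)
This is a direct computation, and I would organize it around a single identity: the cross product of two columns in $R^3$ can be expressed through the $2\times 2$ minors. With the notation of the statement, $(\delta_0,\delta_1,\delta_2)$ is, up to the sign convention, the cross product $C_1\times C_2$ of the two columns of $A$. More precisely, with $\delta_i=(-1)^i\det A_i$ and rows indexed $0,1,2$, we have $\delta = C_1 \times C_2$ after the usual identification. Likewise $(\Delta_0,\Delta_1,\Delta_2)$ is $D\times \delta = D\times(C_1\times C_2)$. The plan is to apply the vector triple product (BAC--CAB) identity
$$D\times(C_1\times C_2) = (D^TC_2)\,C_1 - (D^TC_1)\,C_2,$$
which holds over any commutative ring because it is a polynomial identity in the entries. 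Reading off the $i$-th coordinate then gives
$$\Delta_i = a_{i0}\,(D^TC_2) - a_{i1}\,(D^TC_1) = \det\begin{bmatrix} a_{i0} & a_{i1}\\ D^TC_1 & D^TC_2\end{bmatrix},$$
which is exactly the claim.

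The steps, in order, are these.
\begin{enumerate}
\item Write out $\delta_0=a_{10}a_{21}-a_{20}a_{11}$, $\delta_1=-(a_{00}a_{21}-a_{20}a_{01})$, and $\delta_2=a_{00}a_{11}-a_{10}a_{01}$, and confirm that this vector is $C_1\times C_2$ in the standard coordinate convention.
\item Check that the same sign convention applied to $B=[D\,|\,\delta]$ produces $\Delta=D\times\delta$.
\item Apply the triple product identity.
\end{enumerate}

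The only real obstacle is getting the signs right in steps 1 and 2. The convention $(-1)^i\det(\text{delete row } i)$, with rows $0,1,2$, needs to match the standard cross product: $\delta_0=\det$ of rows $1,2$, $\delta_1=-\det$ of rows $0,2$ (that is, $\det$ of rows $2,0$), and $\delta_2=\det$ of rows $0,1$. This is indeed the cyclic pattern of $C_1\times C_2$. If any doubt remains, I would fall back on directly expanding a single coordinate, say $\Delta_0=D_1\delta_2-D_2\delta_1$, and verifying that it equals $a_{00}D^TC_2-a_{01}D^TC_1$. In that expansion the $D_0$ terms cancel, and the remaining terms match after expanding $\delta_1$ and $\delta_2$. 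The cases $i=1,2$ then follow by cyclic symmetry of the indices.
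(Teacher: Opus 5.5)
Your proof is correct and is essentially the paper's argument in vector language. The paper rewrites the target determinant as $D^T A[-a_{i1}\,\,a_{i0}]^T = D^T(a_{i0}C_2 - a_{i1}C_1)$ and notes that the entries of $A[-a_{i1}\,\,a_{i0}]^T$ are the $2\times 2$ minors of $A$ through row $i$, which is the coordinatewise content of the identity $D\times(C_1\times C_2) = (D^TC_2)\,C_1 - (D^TC_1)\,C_2$ that you invoke; your check that $\delta = C_1\times C_2$ under the $(-1)^i\det A_i$ convention, which the paper leaves implicit, is right.
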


\begin{proof}
Notice that this determinant agrees with the product $D^T A[-a_{i1}\,\,a_{i0}]^T$. In particular, the product $A[-a_{i1}\,\,a_{i0}]^T$ has $0$ as its $i$th entry and the entries in position $j\neq i$ are precisely the minors involving the $i$th and $j$th rows of $A$. Hence it follows that $D^T A[-a_{i1}\,\,a_{i0}]^T$ agrees with the $i$th signed minor of $B$.
\end{proof}

With \Cref{Hartshorne lemma,Linear algebra lemma - minors of minors}, we may now describe the syzygies of the subideal $J=(f_0',f_1',f_2')$.

\begin{prop}\label{J g2p}
The $R$-ideal $J=(f_0',f_1',f_2')$ is perfect of grade two with syzygy matrix
\begin{equation}\label{HB subideal}
\Theta=\left[
\begin{array}{c|c}
     & g_0  \\
 \varphi_1\begin{bmatrix}
     q_0\\
     q_1
 \end{bmatrix} - \varphi_2\begin{bmatrix}
     r_0\\
     r_1
 \end{bmatrix}    & g_1\\
     &g_2
\end{array}
\right]
\end{equation}
for $\varphi_1$ and $\varphi_2$ in \Cref{I(C1) and I(C2) HB resolutions} and $q_0,q_1,r_0,r_1$  in \Cref{dim 3 alpha decomp}.    
\end{prop}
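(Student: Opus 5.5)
The plan is to recognize $J$ as the ideal of maximal minors of the $3\times 2$ matrix $\Theta$, and then apply the converse direction of the Hilbert--Burch theorem \cite[20.15]{Eisenbud}. That direction says that if the ideal of $2\times 2$ minors of a $3\times 2$ matrix has grade two, then it is perfect of grade two, resolved by that matrix. So there are two things to verify: the signed minors of $\Theta$ are exactly $f_0',f_1',f_2'$, and $\operatorname{grade} J = 2$.

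\emph{The minors.} I would apply \Cref{Linear algebra lemma - minors of minors} with $A=\psi=[C_1\,|\,C_2]$, whose signed minors are $\delta_i=g_i$ by \Cref{dim V = 3 - (g_0 g_1 g_2) Artinian so HB G2P}, and with $D=\varphi_1[q_0\,\,q_1]^T-\varphi_2[r_0\,\,r_1]^T$. The lemma gives
\[
\Delta_i = \psi_{i0}\,(D^TC_2) - \psi_{i1}\,(D^TC_1).
\]
Now $C_1^T\varphi_1=0$ and $C_2^T\varphi_2=0$ by \Cref{I(C1) and I(C2) HB resolutions}. Combining this with \Cref{dim 3 alpha decomp}, I expect
\[
D^TC_2 = [q_0\,\,q_1]\,\varphi_1^TC_2=\alpha_1, \qquad D^TC_1=-[r_0\,\,r_1]\,\varphi_2^TC_1=-\alpha_2 .
\]
Hence $\Delta_i=\psi_{i0}\alpha_1+\psi_{i1}\alpha_2$, which equals $f_i'$ by (\ref{dim 3 - f alpha decomposition}). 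So $J=I_2(\Theta)$ with the correct signs. It follows, by the same computation or by the Laplace expansion of a $3\times3$ determinant with a repeated column, that both columns of $\Theta$ are syzygies on $J$. Concretely, $[f_0'\,f_1'\,f_2']D=\alpha_1C_1^TD+\alpha_2C_2^TD=-\alpha_1\alpha_2+\alpha_2\alpha_1=0$.

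\emph{The grade.} Since $J$ is generated by maximal minors of a $3\times 2$ matrix, $\operatorname{grade} J\leq 2$. It therefore suffices to show that $f_0',f_1',f_2'$ have no nontrivial common factor. Suppose $h$ is a common factor of positive bidegree $(c,d)$. Then $I_U=(f_0',f_1',f_2',p_3)\subseteq (h,p_3)$, and there are two cases.
\begin{itemize}
\item If $h\mid p_3$, then every generator of $I_U$ vanishes on the curve $V(h)\subseteq\P^1\times\P^1$, so $U$ has basepoints.
\item Otherwise $\gcd(h,p_3)=1$, and \Cref{Hartshorne lemma} gives $ad+bc$ intersection points of $V(h)$ and $V(p_3)$ counted with multiplicity. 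Here $ad+bc>0$ because $b\geq 2n-1\geq 3$, $(c,d)\neq(0,0)$, and $a\geq 1$ in the relevant case. These points are common zeros of all four generators.
\end{itemize}
In either case this contradicts basepoint freeness, so $\operatorname{grade} J=2$ and Hilbert--Burch concludes.

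The only step I expect to need care is the sign and transpose bookkeeping in the minor computation, since it must reproduce $f_i'$ exactly and not up to sign. I would also check the degenerate case $a=0$ in the grade argument separately; there all forms lie in $\K[u,v]$, and the common-factor argument should go through directly there as well.
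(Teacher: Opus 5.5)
Your proposal is correct and follows essentially the paper's proof. It uses the same converse Hilbert--Burch reduction, the same minor computation via \Cref{Linear algebra lemma - minors of minors} (you apply it once to the combined first column, where the paper splits $\Theta$ into $P$ and $Q$ and adds by multilinearity), and the same common-factor argument with \Cref{Hartshorne lemma} for the grade. Two small corrections: the dichotomy ``$h\mid p_3$ or $\gcd(h,p_3)=1$'' is only valid for $h$ irreducible, so take an irreducible common factor (or argue as the paper does that a common factor of $h$ and $p_3$ would divide all of $I_U$). Also, no separate $a=0$ check is needed, since the setting ($\sqrt{I_U}=(s,t)\cap(u,v)$, image a surface) forces $a\geq 1$. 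Your expectation that the argument goes through at $a=0$ is in fact unfounded: there a common factor $h\in\K[u,v]$ coprime to $p_3$ creates no basepoints.
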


\begin{proof}
By the Hilbert-Burch theorem \cite[20.15]{Eisenbud}, it suffices to show that the generators of $J$ agree with the signed minors of $\Theta$, and also that $\hgt J = 2$. For the first assertion, we employ \Cref{Linear algebra lemma - minors of minors}. Consider the matrices
\[
P=\left[
\begin{array}{c|c}
     & g_0  \\
 \varphi_1\begin{bmatrix}
     q_0\\
     q_1
 \end{bmatrix}     & g_1\\
     &g_2
\end{array}
\right]
\quad\quad\text{and}\quad\quad
Q=\left[
\begin{array}{c|c}
     & g_0  \\
 \varphi_2\begin{bmatrix}
     r_0\\
     r_1
 \end{bmatrix}    & g_1\\
     &g_2
\end{array}
\right]
\]
and write $\Delta^P_i= (-1)^i\det P_i$ and $\Delta^Q_i = (-1)^i\det Q_i$ to denote their signed minors for $i=0,1,2$, following the previous convention. Recall from \Cref{dim V = 3 - (g_0 g_1 g_2) Artinian so HB G2P} that $g_0,g_1,g_2$ are the signed minors of $\psi = [C_1\,|\,C_2]$, hence by \Cref{Linear algebra lemma - minors of minors} it follows that
\[
\Delta^P_i = \det \left[
\begin{array}{cc}
    C_{1i}& C_{2i} \\[1ex]
[q_0 \,\, q_1]\varphi_1^T C_1 & [q_0\,\,q_1]\varphi_1^T C_2
\end{array}
\right]
\]
where $C_{1i}$ and $C_{2i}$ are the $i$th entries of $C_1$ and $C_2$. However, recall that $\varphi_1$ is the syzygy matrix on $C_1^T$ by \Cref{I(C1) and I(C2) HB resolutions}, hence $[q_0 \,\, q_1]\varphi_1^T C_1 =0$. Thus $\Delta^P_i = C_{1i} \cdot [q_0\,\,q_1]\varphi_1^T C_2 = C_{1i} \alpha_1$, following \Cref{dim 3 alpha decomp}. 

Similarly, the signed minors of $Q$ are 
\[
\Delta^Q_i = \det \left[
\begin{array}{cc}
    C_{1i}& C_{2i} \\[1ex]
[r_0 \,\, r_1]\varphi_2^T C_1 & [r_0\,\,r_1]\varphi_2^T C_2
\end{array}
\right]
\]
by \Cref{Linear algebra lemma - minors of minors}. As $\varphi_2$ is the syzygy matrix of $C_2^T$, we have $[r_0\,\,r_1]\varphi_2^T C_2=0$. Thus we have $\Delta^Q_i = -C_{2i} \cdot [r_0 \,\, r_1]\varphi_2^T C_1 = - C_{2i} \alpha_2$ by \Cref{dim 3 alpha decomp}. By multilinearity of determinants, the signed minors of $\Theta$ are precisely $\Delta^\Theta_i = C_{1i} \alpha_1 +C_{2i} \alpha_2 = f_i'$ for $i=0,1,2$, by \Cref{dim 3 f' decomp}.

As the generators of $J$ agree with the signed minors of $\Theta$, the claim will follow once it has been shown that $\hgt J \geq 2$. As $J$ is nonzero, it is enough to show that $\hgt J \neq 1$. Suppose otherwise that $\hgt J=1$, and so $f_0',f_1',f_2'$ have a non-unit common factor $h$. As $I_U = (f_0',f_1',f_2',p_3)\subseteq (h,p_3)$, it is clear that $h$ and $p_3$ have no common factor, as $ \hgt I_U =2$. Moreover, since $h$ is bihomogeneous, we may write $\bideg h= (c,d)$ for either $c\geq 1$ or $d\geq 1$. From this ideal containment, we have $V(h,p_3)\subseteq V(I_U)$, and so by \Cref{Hartshorne lemma} it follows that $V(I_U)$ contains $ad+bc >0$ points. However, this is a contradiction since $I_U$ is basepoint free. 
\end{proof}

\begin{cor}\label{dim 3 - alpha1 alpha2 reg sequence}
The polynomials $\alpha_1, \alpha_2$ form an $R$-regular sequence.   
\end{cor}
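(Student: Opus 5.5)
We need to show $\alpha_1,\alpha_2$ form an $R$-regular sequence. Since $R$ is a domain and both are nonzero, this is equivalent to $\gcd(\alpha_1,\alpha_2)=1$. The plan is to argue by contradiction, exactly as in the last paragraph of the proof of \Cref{J g2p}, using the basepoint-free hypothesis through \Cref{Hartshorne lemma}.

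First we check that $\alpha_1$ and $\alpha_2$ are nonzero. By \Cref{dim 3 f' decomp}, $[f_0'\,\,f_1'\,\,f_2']^T = C_1\alpha_1 + C_2\alpha_2$.
\begin{itemize}
\item If $\alpha_2=0$, then $f_i' = C_{1i}\alpha_1$ for $i=0,1,2$. In the proof of \Cref{J g2p} these were computed as the signed minors of $\Theta$, and $\alpha_1$ would then be a common factor of $f_0',f_1',f_2'$. This contradicts $\hgt J=2$ from \Cref{J g2p}, since $\alpha_1\in R_{a,b-\mu}$ is a nonunit (here $a\geq 1$, as otherwise $U$ has basepoints). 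Note that $f_i'\neq 0$ forces $\alpha_1\neq 0$.
\item The case $\alpha_1=0$ is symmetric, using $\alpha_2\in R_{a,b-n+\mu}$.
\end{itemize}

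Now suppose $h$ is a nonunit bihomogeneous common factor of $\alpha_1$ and $\alpha_2$. Then every $f_i' = C_{1i}\alpha_1 + C_{2i}\alpha_2$ is divisible by $h$, so $J\subseteq (h)$. Once more this contradicts $\hgt J = 2$ from \Cref{J g2p}. (Equivalently, we could rerun the basepoint argument: $I_U\subseteq (h,p_3)$ with $\gcd(h,p_3)=1$, and \Cref{Hartshorne lemma} would produce $ad+bc>0$ basepoints, where $(c,d)=\bideg h$.) Hence $\gcd(\alpha_1,\alpha_2)=1$. In the UFD $R$ this gives that $\alpha_2$ is a nonzerodivisor modulo $\alpha_1$, so $\alpha_1,\alpha_2$ is a regular sequence. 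Both elements are bihomogeneous of positive bidegree, so they lie in the irrelevant maximal ideal and $(\alpha_1,\alpha_2)\neq R$.

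The only point needing any care is the nonvanishing of $\alpha_1$ and $\alpha_2$, which relies on the bidegree of $\alpha_i$ being nontrivial so that it is a nonunit. The coprimality step itself is immediate from \Cref{J g2p}.
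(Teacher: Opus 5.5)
Your argument is correct and is essentially the paper's proof. The paper observes that $J\subseteq(\alpha_1,\alpha_2)$ by \Cref{dim 3 f' decomp}, so $\hgt(\alpha_1,\alpha_2)\geq \hgt J=2$ by \Cref{J g2p}; your common-factor argument in the UFD $R$ is just this height comparison unpacked, and your separate nonvanishing step is redundant, since $\alpha_2=0$ would already force $J\subseteq(\alpha_1)$, an ideal of height at most one.
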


\begin{proof}
Notice that $J\subseteq (\alpha_1, \alpha_2)$, following \Cref{dim 3 f' decomp}. Hence by \Cref{J g2p}, we have $\hgt (\alpha_1,\alpha_2) = 2$.
\end{proof}

Following \Cref{J g2p}, the syzygy module of $J=(f_0',f_1',f_2')$ is spanned by the columns of $\Theta$ in (\ref{HB subideal}). Moreover, we note that these columns may be extended to syzygies $S$, as in (\ref{dim 3 -- S defn}), and $S_1$ on $I_U=(f_0',f_1',f_2',p_3)$. As such, in order to produce additional nontrivial syzygies on $I_U$, we must involve the last generator $p_3$.

\begin{rem}\label{dim 3 -- new syzygies remark}
Recall that $p_3 \in R_{a,b}$ and $b\geq 2n-1$ by assumption. Thus from the proof of \Cref{dim 3 alpha decomp} and applying \Cref{Sylvester containment lemma}, we may write
\begin{equation}\label{dim 3 - p_3 decomp}
p_3= C_2^T\varphi_1 \begin{bmatrix}
    m_0\\m_1
\end{bmatrix}= C_1^T\varphi_2 \begin{bmatrix}
    n_0\\n_1
\end{bmatrix}    
\end{equation}
for some bihomogeneous $m_0 \in R_{a,b-n+\mu-\nu_1}$, $m_1\in R_{a,b-n+\nu_1}$ and $n_0\in R_{a,b-\mu-\nu_2}$, $n_1\in R_{a,b-n+\nu_2}$. With this, we have the following syzygies on $I_U=(f_0',f_1',f_2',p_3)$:
\begin{equation}\label{dim 3 - Syzygies S_2 and S_3}
S_2=\left[
\begin{array}{c}
     \varphi_1\begin{bmatrix}
         m_0\\m_1
     \end{bmatrix}  \\[2ex]
      \hline
      -\alpha_2
\end{array}
\right]
\quad\quad\text{and}\quad\quad S_3=\left[
\begin{array}{c}
     \varphi_2\begin{bmatrix}
         n_0\\n_1
     \end{bmatrix}  \\[2ex]
      \hline
      -\alpha_1
\end{array}
\right]
\end{equation}
consisting of bihomogeneous entries in $R_{a,b-n+\mu}$ and $R_{a,b-\mu}$ respectively.
\end{rem}

\begin{proof}
A brief computation shows that $S_2$ and $S_3$ are syzygies on $I_U = (f_0',f_1',f_2',p_3)$. From \Cref{dim 3 f' decomp} we have
$$[f_0'\,\,f_1'\,\,f_2'\,\,p_3] \, S_2 = [\alpha_1\,\,\alpha_2]\psi^T \varphi_1\begin{bmatrix}
       m_0\\m_1
    \end{bmatrix} - \alpha_2 p_3.$$
However, the columns of $\psi$ are $C_1$ and $C_2$, and $\varphi_1$ is the syzygy matrix on the entries of $C_1$ by \Cref{I(C1) and I(C2) HB resolutions}. Hence $\varphi_1$ annihilates the first row of $\psi^T$ and so
$$[\alpha_1\,\,\alpha_2]\psi^T \varphi_1\begin{bmatrix}
       m_0\\m_1
    \end{bmatrix} - \alpha_2 p_3 = \alpha_2 C_2^T\varphi_1\begin{bmatrix}
        m_0\\m_1
   \end{bmatrix} - \alpha_2 p_3 =0$$
following (\ref{dim 3 - p_3 decomp}). Thus $S_2$ is a syzygy on $I_U$, and a similar computation shows that $[f_0'\,\,f_1'\,\,f_2'\,\,p_3] \, S_3=0$ as well.
\end{proof}

With \Cref{dim 3 -- new syzygies remark}, we claim that the syzygies obtained so far are sufficient to determine the bigraded strand $\Z_{2a-1,b-1}$ of the approximation complex, and hence the implicit equation of the surface $X_U$ following \Cref{Botbol implicit eqn}. Before we verify this claim, we state a brief lemma from linear algebra.

\begin{lemma}\label{linear algebra lemma - alternating matrix of minors}
Let $A$ be an $n\times 2$ matrix with entries in a ring. The product
$$B=A\begin{bmatrix}
    0&1\\
    -1&0
\end{bmatrix}A^T$$
is an $n\times n$ alternating matrix with entries $B_{ij} = \det \left[\begin{array}{c}
    \,R_i\,\\
    \hline
    R_j
\end{array}\right]$ where $R_i$, $R_j$ are the $i$th and $j$th rows of $A$. 
\end{lemma}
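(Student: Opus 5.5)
The plan is a direct entrywise computation. Write $J = \begin{bmatrix} 0&1\\ -1&0\end{bmatrix}$ and let $A$ have rows $R_i = [a_{i0}\;\, a_{i1}]$ for $i=1,\ldots,n$.

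First, I will show that $B$ is alternating, using only the matrix structure. Since $J^T = -J$, we get
$$B^T = (AJA^T)^T = AJ^TA^T = -AJA^T = -B,$$
so $B$ is skew-symmetric. This alone does not give zero diagonal entries over a ring of characteristic $2$, so the diagonal will be handled by the entrywise formula below.

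Second, I will compute a general entry. By definition, $B_{ij} = R_i\, J\, R_j^T$. Applying $J$ to $R_j^T = [a_{j0}\;\, a_{j1}]^T$ gives $[a_{j1}\;\, -a_{j0}]^T$. Multiplying on the left by $R_i$ then gives
$$B_{ij} = a_{i0}a_{j1} - a_{i1}a_{j0}.$$
This is exactly the determinant of the $2\times 2$ matrix whose first row is $R_i$ and whose second row is $R_j$, as claimed. Taking $i=j$ gives a determinant with two equal rows, so $B_{ii}=0$ and $B$ is genuinely alternating, not merely skew-symmetric.

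There is no real obstacle here. The only point needing care is the sign convention, namely that $J$ acting on $R_j^T$ produces $[a_{j1}\;\, -a_{j0}]^T$ rather than its negative, so that the result comes out as $\det[R_i; R_j]$ and not $\det[R_j; R_i]$. Commutativity of the ring is used implicitly in identifying $a_{i0}a_{j1} - a_{i1}a_{j0}$ with the determinant.
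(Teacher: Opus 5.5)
Your proof is correct and is essentially the paper's argument: the paper computes $AJ$ (the columns of $A$ swapped, with the new first column negated) and pairs its $i$th row with the $j$th column of $A^T$, while you group the product as $R_i(JR_j^T)$; both give the entry $a_{i0}a_{j1}-a_{i1}a_{j0}$. Your additional check that $B^T=-B$ and $B_{ii}=0$ spells out the ``alternating'' claim, which the paper leaves implicit in the minor formula.
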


\begin{proof}
Multiplying $A$ with the $2\times 2$ alternating matrix above results in the $n\times 2$ matrix with the columns of $A$ reversed, and the first negated. Thus the product of the $i$th row of this matrix and the $j$th column of $A^T$ is precisely the $2\times 2$ minor of $A$ above.    
\end{proof}

\begin{thm}\label{dim V = 3 - main theorem}
With the assumptions of \Cref{General Setting}, assume that $\dim V=3$. The ideal $I_U$ has syzygies $S$ in bidegree $(0,n)$, $S_1$ in bidegree $(a,b-n)$, $S_2$ in bidegree $(a,b-n+\mu)$ and $S_3$ in bidegree $(a,b-\mu)$ such that $\dim \langle S,S_1,S_2,S_3\rangle_{2a-1,b-1} = 2ab$.    
\end{thm}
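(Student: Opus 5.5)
The plan is to take $S_1$ from the Hilbert--Burch matrix $\Theta$ of \Cref{J g2p} and $S_2,S_3$ from \Cref{dim 3 -- new syzygies remark}, and then do two things: count the columns these contribute in bidegree $(2a-1,b-1)$, and show that no nontrivial relation among them survives in that bidegree.

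\emph{Construction.} Let $S_1$ be the first column of $\Theta$, namely $\varphi_1[q_0\,\,q_1]^T-\varphi_2[r_0\,\,r_1]^T$, extended by a zero in the fourth position. Since $[f_0'\,\,f_1'\,\,f_2']\,\Theta=0$, this is a syzygy on $I_U=(f_0',f_1',f_2',p_3)$. Its bidegree comes from \Cref{I(C1) and I(C2) HB resolutions} and \Cref{dim 3 alpha decomp}. The entries of $\varphi_1$ have degrees $\nu_1$ and $\mu-\nu_1$ in $u,v$. Since $q_0\in R_{a,b-n-\nu_1}$ and $q_1\in R_{a,b-\mu-n+\nu_1}$, the product $\varphi_1[q_0\,\,q_1]^T$ lies in bidegree $(a,b-n)$, and similarly for the $\varphi_2$ term. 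The syzygy $S$ has bidegree $(0,n)$, and $S_2,S_3$ have bidegrees $(a,b-n+\mu)$ and $(a,b-\mu)$ by \Cref{dim 3 -- new syzygies remark}.

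\emph{Counting.} Exactly as in the proof of \Cref{dim V = 2 - main theorem}, a syzygy of bidegree $(c,d)$ contributes $h^0(\O_{\P^1\times\P^1}(2a-1-c,\,b-1-d))$ columns to the strand in bidegree $(2a-1,b-1)$.
\begin{itemize}
\item $S$ contributes $2a(b-n)$.
\item $S_1$ contributes $h^0(\O(a-1,n-1))=an$.
\item $S_2$ contributes $h^0(\O(a-1,n-\mu-1))=a(n-\mu)$.
\item $S_3$ contributes $h^0(\O(a-1,\mu-1))=a\mu$.
\end{itemize}
These are nonnegative since $1\le\mu\le n-1$. The total is $2ab-2an+an+an=2ab$.

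\emph{Independence.} This is the main point. Four syzygies on four generators cannot span a free module, because $\syz(I_U)$ has rank $3$, so unlike the $\dim V=2$ case I cannot argue that the columns span a free module. Instead I would show that every relation has coefficients of too large a bidegree to appear in $(2a-1,b-1)$.

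Suppose $c_0S+c_1S_1+c_2S_2+c_3S_3=0$ with $c_i$ bihomogeneous of the appropriate bidegrees, so that each term lies in bidegree $(2a-1,b-1)$. The fourth coordinate gives $-c_2\alpha_2-c_3\alpha_1=0$. By \Cref{dim 3 - alpha1 alpha2 reg sequence}, $\alpha_1,\alpha_2$ is a regular sequence, so $c_2=e\alpha_1$ for some $e$. Here $c_2$ has bidegree $(a-1,n-\mu-1)$, while $\alpha_1\in R_{a,b-\mu}$ has $s,t$-degree $a$. This forces $c_2=0$, and then $c_3=0$ as well, since $\alpha_1\neq0$.

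The remaining relation $c_0S+c_1S_1=0$ involves only the first three coordinates, that is, the two columns of $\Theta$. By \Cref{J g2p} and the Hilbert--Burch theorem, $\Theta$ is injective, since $J$ is perfect of grade two. Hence $c_0=c_1=0$.

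Therefore the $2ab$ columns counted above are linearly independent, which gives $\dim\langle S,S_1,S_2,S_3\rangle_{2a-1,b-1}=2ab$. The only delicate step is the degree comparison that kills $c_2$ and $c_3$: it uses that the $s,t$-degree $a-1$ of the coefficient is strictly less than the $s,t$-degree $a$ of $\alpha_1$.
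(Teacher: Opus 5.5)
Your proof is correct. The construction and the column count coincide with the paper's, but your independence argument takes a genuinely different and shorter route.

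The paper computes the whole kernel of $M=[S\,|\,S_1\,|\,S_2\,|\,S_3]$ in four steps:
\begin{itemize}
\item It writes down an explicit element $N$ with entries $(q_0m_1-q_1m_0)+(r_0n_1-r_1n_0)$, $p_3$, $-\alpha_1$, $\alpha_2$.
\item It checks $MN=0$ using the alternating matrices of minors of $\varphi_1,\varphi_2$ and the decompositions of $\alpha_1,\alpha_2,p_3$.
\item It proves, via the Buchsbaum--Eisenbud criterion with $\rk M=3$ and $\hgt(\alpha_1,\alpha_2)=2$, that $N$ generates $\ker M$.
\item It observes that this generator lives in bidegree $(2a,2b-n)$, so $\ker M$ vanishes in the $(2a-1,b-1)$ strand.
\end{itemize}

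You never need that generator. The fourth row of $M$ is zero in the first two columns. Combined with the regular sequence $\alpha_1,\alpha_2$ from \Cref{dim 3 - alpha1 alpha2 reg sequence}, this forces $(c_2,c_3)$ to be a multiple of $(\alpha_1,-\alpha_2)$. Comparing $s,t$-degrees ($a-1<a$) makes that multiple zero. Injectivity of $\Theta$ then disposes of $c_0,c_1$; this only needs one nonzero $2\times 2$ minor, such as any $f_i'$. Your route avoids the sign-sensitive verification of $MN=0$ and the rank computation for $M$.

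What the paper's route buys is the full exact complex of relations among $S,S_1,S_2,S_3$. The paper later suggests this complex may appear inside a free resolution of $I_U$. It also serves as the template the paper reuses for $\dim V=4$. There $M$ has no such zero row, but your idea still adapts: applying $\psi^T$ to a relation turns it into a Koszul-type relation between $(c_1,c_2,c_3)$ and $(\alpha_1,\alpha_2,\alpha_3)$, which the same degree argument kills.
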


\begin{proof}
As noted, the columns of $\Theta$ in \Cref{J g2p} are syzygies on $J=(f_0',f_1',f_2')$ and may be extended to syzygies $S$ in (\ref{dim 3 -- S defn}) and $S_1$ on $I_U$ by adding zero entries. With this, consider the matrix
\[
M=\left[
\begin{array}{c|c|c|c}
g_0 & & & \\
g_1&  \varphi_1\begin{bmatrix}
     q_0\\
     q_1
 \end{bmatrix} - \varphi_2\begin{bmatrix}
     r_0\\
     r_1
 \end{bmatrix} & \varphi_1\begin{bmatrix}
         m_0\\m_1
     \end{bmatrix}  & \varphi_2\begin{bmatrix}
         n_0\\n_1
     \end{bmatrix}  \\
 g_2 & & & \\[1ex]    
      \hline
 0&  0&  -\alpha_2 & -\alpha_1
\end{array}
\right]
\]
with columns $\{S,S_1,S_2,S_3\}$ for $S_2$ and $S_3$ in \Cref{dim 3 -- new syzygies remark}. To verify the assertion, we must show that $M_{2a-1,b-1}$ consists of $2ab$ linearly independent columns. The number of columns will follow from a counting argument similar to the proof of \Cref{dim V = 2 - main theorem}, and it suffices to show that $M_{2a-1,b-1}$ is injective for the independence.

We note that $M$ itself is not injective, and we claim its kernel is spanned by
\[
N=\left[
\begin{array}{c}
     (q_0m_1-q_1m_0) + (r_0n_1-r_1n_0) \\
      p_3\\
      -\alpha_1\\
      \alpha_2
\end{array}
\right]_.
\]
We first verify that $N\in \ker M$, which follows from a computation. Clearly, $N$ multiplies the last row of $M$ to $0$. Hence by writing $M'$ to denote the submatrix obtained by deleting this row, it suffices to show that $M'N=0$. From \Cref{dim 3 alpha decomp} and (\ref{dim 3 - p_3 decomp}) we note that
\begin{equation}
\begin{aligned}\label{N in kernel computation}
&\,\,\varphi_1\begin{bmatrix}
     q_0\\
     q_1
 \end{bmatrix}p_3 - \varphi_2\begin{bmatrix}
     r_0\\
     r_1
 \end{bmatrix}p_3 -\varphi_1\begin{bmatrix}
         m_0\\m_1
     \end{bmatrix} \alpha_1 +\varphi_2\begin{bmatrix}
         n_0\\n_1
     \end{bmatrix}\alpha_2\\[1ex]
     =&\,\, \varphi_1\left(\begin{bmatrix}
     q_0\\
     q_1
 \end{bmatrix}[m_0\,\,m_1] - \begin{bmatrix}
         m_0\\m_1
     \end{bmatrix} [q_0\,\,q_1]\right)\varphi_1^TC_2 - \varphi_2\left(\begin{bmatrix}
     r_0\\
     r_1
 \end{bmatrix}[n_0\,\,n_1] - \begin{bmatrix}
         n_0\\n_1
     \end{bmatrix}[r_0\,\,r_1]\right)\varphi_2^TC_1 \\[1ex]
 =&\,\, -(q_1m_0 - q_0m_1) \cdot \varphi_1\begin{bmatrix}
     0&1\\
     -1&0
 \end{bmatrix}\varphi_1^TC_2 + (r_1n_0 - r_0n_1)\cdot \varphi_2\begin{bmatrix}
     0&1\\
     -1&0
 \end{bmatrix}\varphi_2^TC_1. \\[1ex]
\end{aligned}
\end{equation}
From \Cref{linear algebra lemma - alternating matrix of minors}, we note that 
\[
\varphi_1\begin{bmatrix}
     0&1\\
     -1&0
 \end{bmatrix}\varphi_1^T
 \quad\quad\text{and}\quad\quad
 \varphi_2\begin{bmatrix}
     0&1\\
     -1&0
 \end{bmatrix}\varphi_2^T
\]
are the $3\times 3$ alternating matrices consisting of the minors of $\varphi_1$ and $\varphi_2$, respectively. However, recall that these minors are precisely the (signed) entries of $C_1$ and $C_2$ by \Cref{I(C1) and I(C2) HB resolutions}. Hence by multiplying by $C_2^T$ and $C_1^T$, the entries of these products are precisely the signed minors of $\psi = [C_1\,|\,C_2]$. Thus by \Cref{dim V = 3 - (g_0 g_1 g_2) Artinian so HB G2P}, it follows that 
\begin{equation}\label{Continuation of N inkernel computation}
\varphi_1\begin{bmatrix}
     0&1\\
     -1&0
 \end{bmatrix}\varphi_1^TC_2 = -\begin{bmatrix}
     g_0\\
      g_1\\
      g_2
  \end{bmatrix}
 \quad\quad\text{and}\quad\quad
 \varphi_2\begin{bmatrix}
     0&1\\
     -1&0
 \end{bmatrix}\varphi_2^TC_1 = \begin{bmatrix}
    g_0\\
      g_1\\
      g_2
  \end{bmatrix}
\end{equation}
keeping track of the signs throughout. With this, combining (\ref{N in kernel computation}) and (\ref{Continuation of N inkernel computation}) shows that $M'N=0$, hence $N\in \ker M$ as claimed.

To verify that $\ker M$ is generated by $N$, notice that since $MN=0$, we have the following complex of bigraded $R$-modules and bihomogeneous maps
\begin{equation}\label{dim 3 - MN complex}
 0 \rightarrow R(-2a,-2b+n) \overset{N}{\longrightarrow} \begin{array}{c}
 R(0,-n)\\
 \oplus \\
R(-a,-b+n)\\
 \oplus \\
R(-a,-b+n-\mu)\\
\oplus\\
R(-a,-b+\mu)
\end{array} \overset{M}{\longrightarrow} \,
R^4
\end{equation}
with $M$ and $N$ as above. Moreover, we claim that this complex is exact. However, this follows easily by the Buchsbaum-Eisenbud acyclicity criterion \cite[Cor. 1]{BE73}. Indeed, notice that $\Theta$ from \Cref{J g2p} is a submatrix of $M$, and so it follows easily that $\rk M =3$. Moreover, the ideal of entries of $N$ has codimension at least two by \Cref{dim 3 - alpha1 alpha2 reg sequence}. Hence (\ref{dim 3 - MN complex}) is exact, and so it follows that $\ker M$ is spanned by $N$.

Since (\ref{dim 3 - MN complex}) is a bigraded free exact sequence, we may consider its strand in bidegree $(2a-1,b-1)$. From degree considerations in each component of the bigrading in (\ref{dim 3 - MN complex}), and hence the bidegrees of the entries in $N$, it follows that $N_{2a-1,b-1} =0$ and so $M_{2a-1,b-1}$ is injective, as claimed. Now that the columns of $M_{2a-1,b-1}$ have been shown to be linearly independent, we need only count them to verify the original assertion. As the bidegrees of the entries of each syzygy are known and recorded in (\ref{dim 3 - MN complex}), we proceed as in the proof of \Cref{dim V = 2 - main theorem}. 

As $S$ has entries in bidegree $(0,n)$, it contributes
$$h^0(\O_{\P^1\times \P^1}(2a-1,b-n-1)) = 2a(b-n)$$
columns to $M_{2a-1,b-1}$. From \Cref{J g2p} and \Cref{dim 3 alpha decomp}, the syzygy $S_1$ has entries in bidegree $(a,b-n)$, and hence yields
$$h^0(\O_{\P^1\times \P^1}(a-1,n-1)) = an$$
columns of $M_{2a-1,b-1}$. Lastly, by \Cref{dim 3 -- new syzygies remark} the syzygies $S_2$ and $S_3$ consist of entries in bidegree $(a,b-n+\mu)$ and $(a,b-\mu)$, respectively. Hence $S_2$ and $S_3$ give rise to
$$h^0(\O_{\P^1\times \P^1}(a-1,n-\mu-1)) = a(n-\mu)\quad\quad\text{and}\quad\quad h^0(\O_{\P^1\times \P^1}(a-1,\mu-1)) = a\mu$$
columns of $M_{2a-1,b-1}$. Summing these figures, we see that $\dim \langle S,S_1,S_2,S_3\rangle_{2a-1,b-1} = 2ab$.
\end{proof}

\begin{cor}
With the assumptions of \Cref{dim V = 3 - main theorem}, the first differential $d_1$ of the bigraded strand $\Z_{2a-1,b-1}$ of the approximation complex $\Z$ on the generators of $I_U$ is determined by the syzygies $\{S,S_1,S_2,S_3\}$. 
\end{cor}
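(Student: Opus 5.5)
The corollary follows by combining \Cref{dim V = 3 - main theorem} with \Cref{syzygies to approx comp}, \Cref{Botbol det Z}, and \Cref{Botbol implicit eqn}, in the same way as the corresponding corollary for $\dim V=2$. The plan is to identify the image of the strand $(d_1)_{2a-1,b-1}$ with the span of the columns that $S,S_1,S_2,S_3$ contribute in that bidegree, and then compare dimensions.

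First, by \Cref{syzygies to approx comp}, each syzygy $T=(T_0,\ldots,T_3)$ of $I_U$ of bidegree $(c,d)$ contributes columns $m\cdot(T_0y_0+\cdots+T_3y_3)$ to a matrix representation of $(d_1)_{2a-1,b-1}$. Here $m$ ranges over the monomials of bidegree $(2a-1-c,\,b-1-d)$. These columns are exactly the columns of $M_{2a-1,b-1}$ from the proof of \Cref{dim V = 3 - main theorem}, read as linear forms in $y_0,\ldots,y_3$. Linear independence over $\K$ of the vectors $m\cdot T$ in $R^4_{2a-1,b-1}$ is equivalent to linear independence over $S$ of the corresponding elements of $S\otimes_\K R_{2a-1,b-1}$. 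So the columns coming from $S,S_1,S_2,S_3$ give $2ab$ independent elements of $(\Z_1)_{2a-1,b-1}$, by \Cref{dim V = 3 - main theorem}.

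Second, since $U$ is basepoint free, $(H_2)_{4a-1,3b-1}=0$, so \Cref{Botbol det Z} says $(d_1)_{2a-1,b-1}$ has a square $2ab\times 2ab$ matrix representation. Equivalently, $(Z_1)_{2a-1,b-1}=\syz(I_U)_{2a-1,b-1}$ has $\K$-dimension $2ab$. Then $\langle S,S_1,S_2,S_3\rangle_{2a-1,b-1}$ is a $2ab$-dimensional subspace of this $2ab$-dimensional space, so the two coincide. Hence these four syzygies span all of $(Z_1)_{2a-1,b-1}$ and determine $d_1$ on that strand.

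Finally, \Cref{Botbol implicit eqn} shows that the determinant of the resulting $2ab\times 2ab$ matrix is $F^{\deg\phi_U}$. The only point requiring care is the dimension count $\dim_\K \syz(I_U)_{2a-1,b-1}=2ab$. I would extract it from the "square if and only if" clause of \Cref{Botbol det Z}, reading "square" as meaning the source and target strands have equal rank $2ab$ with the target $R_{2a-1,b-1}$ of dimension $2ab$.
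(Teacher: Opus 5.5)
Your proof is correct and follows the paper's approach. The paper's one-line proof cites \Cref{dim V = 3 - main theorem}, \Cref{Botbol det Z} and \Cref{Botbol implicit eqn} and observes that $\dim\langle S,S_1,S_2,S_3\rangle_{2a-1,b-1}=2ab$ is the number of columns these syzygies contribute; your comparison of a $2ab$-dimensional subspace with the $2ab$-dimensional space $(Z_1)_{2a-1,b-1}$ just spells out what that remark leaves implicit.

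One small correction: $\K$-independence of the vectors $m\cdot T$ is equivalent to $S$-independence of the basis elements $1\otimes mT$ of the free source module $S\otimes_\K (Z_1)_{2a-1,b-1}$, not of their images in $S\otimes_\K R_{2a-1,b-1}$. For the images, independence is not automatic in general; it holds here only because the determinant is nonzero. Your dimension count uses only $\K$-dimensions, so the argument is unaffected.
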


\begin{proof}
This follows from \Cref{dim V = 3 - main theorem}, \Cref{Botbol det Z}, and \Cref{Botbol implicit eqn}, noting that $\dim \langle S,S_1,S_2,S_3\rangle_{2a-1,b-1}$ is precisely the number of columns contributed by these syzygies to a matrix representation of $d_1$.
\end{proof}

With this and \Cref{Botbol implicit eqn}, the determinant of the $2ab\times 2ab$ matrix representation of this differential is a power of the implicit equation of $X_U$.

\begin{rem}
We note that \Cref{dim V = 3 - main theorem} agrees with \cite[5.4]{Weaver25} when $n=2$. Note that if $n=2$, the transition matrix $B$ in (\ref{dim 3 transition matrix equation}) is the identity matrix, hence by (\ref{dim 3 - syzygy on subideal}) we have $[g_0\,\,g_1\,\,g_2]^T = [u^2\,\,uv\,\,v^2]^T$ with syzygy matrix
\[
\psi = \left[
\begin{array}{cc}
-v  & 0 \\
u  & -v\\
0  & u
\end{array}
\right]_.
\]
Writing $C_1$ and $C_2$ for the columns of $\psi$, the syzygy matrices of the entries of these columns are
\[
\begin{array}{cc}
   \varphi_1=\left[
   \begin{array}{ccc}
   0&u\\
   0&v\\
   1&0
   \end{array}
   \right]\quad\quad \text{and}\quad\quad
   &\varphi_2=\left[
   \begin{array}{cc}
   1&0\\
   0&u\\
   0&v\\
   \end{array}
   \right]_.
\end{array}
\]
With this, $C_2^T\varphi_1 =[u\,\,-v^2]$ and $C_1^T\varphi_2=[-v\,\,u^2]$, and $\alpha_1, \alpha_2$ may be written in terms of these matrices by \Cref{dim 3 alpha decomp}, which yields expressions similar to those in \cite[5.2]{Weaver25}. Similarly, $p_3$ may be written as in (\ref{dim 3 - p_3 decomp}) which is similar to its description in the proof of \cite[5.4]{Weaver25}. In particular, the syzygies of \Cref{dim V = 3 - main theorem} agree with those in \cite[5.4]{Weaver25} when $n=2$.
\end{rem}


\section{Syzygies in the case $\dim V=4$}\label{dim 4 section}

We now consider the final case of \Cref{Dimension of V} when the vector space $V$ associated to the singly graded syzygy $S$ has dimension $4$. Recall from \Cref{low n dimension remark} that $n\geq 3$ in this setting, hence this setting is untouched compared to the earlier work in \cite{DS16,Weaver25}. Many of the constructions here are analogous to and extend those developed in the previous sections. However, since $\dim V$ is maximal, we note that the constructions here are more involved than in the previous cases.

With the assumptions of \Cref{General Setting} throughout, we may construct the set $\{f_0,\ldots,f_n\}$ from $S$ as in (\ref{Syzygy rearrangement}). As we assume $\dim V=4$, there is a subset $\{f_0',f_1',f_2',f_3'\}$ among this set which forms a basis for $V$. We note that $V=U$ in this case, and so by \Cref{first dim V generators} we may take $I_U = (f_0',f_1',f_2',f_3')$. Moreover, by \Cref{B transition matrix and syzygies remark} there exists a $4\times (n+1)$ transition matrix $B$ with entries in $\K$ such that 
$$[f_0'\, \,f_1'\,\,f_2'\,\,f_3']\, B= [f_0\,\ldots\,f_n].$$
As before, from (\ref{Syzygy rearrangement}) we have that $[u^n\, u^{n-1}v\, \ldots \,v^n]^T$ is a syzygy on $(f_0,\ldots,f_n)$, hence
\begin{equation}\label{dim 4 - single graded syzygy}
S=B\, [u^n\, u^{n-1}v\, \ldots \,v^n]^T = \begin{bmatrix}
    g_0\\
    g_1\\
    g_2\\
    g_3
\end{bmatrix}    
\end{equation}
 is a syzygy on $I_U=(f_0',f_1',f_2',f_3')$, and is thus the singly graded syzygy in \Cref{General Setting}, with respect to this generating set, by \Cref{B transition matrix and syzygies remark}.

 \begin{prop}\label{dim V = 4 - (g_0 g_1 g_2 g_3) Artinian so HB G2P}
 With the assumptions of \Cref{General Setting}, assume that $\dim V =4$. With (\ref{dim 4 - single graded syzygy}), the ideal of entries $(g_0, g_1, g_2,g_3)$ is an Artinian ideal of $A=\K[u,v]$ with minimal free resolution
\begin{equation}\label{dim 4 - res of (g0,g1,g2,g_3)}
 0 \longrightarrow \begin{array}{c}
 A(-n-\mu_1)\\
 \oplus \\
 A(-n-\mu_2)\\
 \oplus\\
A(-2n+\mu_1+\mu_2)
\end{array} \overset{\psi}{\longrightarrow} \,
A^4(-n)\overset{[g_0\,\,g_1\,\,g_2\,\,g_3]}{\longestrightarrow} \,
A
\end{equation}
for some $\mu_1,\mu_2 \geq 1$, and we may assume $\mu_1 \leq \mu_2\leq n-\mu_1-\mu_2$. Moreover, for $0\leq i\leq 3$, we have $g_i = (-1)^i\det \psi_i$ where $\psi_i$ denotes the submatrix obtained by deleting row $i$ of $\psi$.
\end{prop}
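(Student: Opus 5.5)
I would follow the proof of \Cref{dim V = 3 - (g_0 g_1 g_2) Artinian so HB G2P} almost verbatim, with the Hilbert--Burch theorem applied to a four-generated ideal instead of a three-generated one.

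First, I show that each $g_i$ is nonzero and that the four are $\K$-linearly independent. Since $\{f_0',\ldots,f_3'\}$ is a basis of $V$, the transition matrix $B$ has rank $4$. The monomials $u^n,u^{n-1}v,\ldots,v^n$ are $\K$-linearly independent, so (\ref{dim 4 - single graded syzygy}) shows that $g_0,\ldots,g_3$ are $\K$-linearly independent. In particular each is nonzero, and, being forms of the same degree $n$, they minimally generate $(g_0,g_1,g_2,g_3)$.

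Next I show that $\hgt(g_0,\ldots,g_3)=2$. If the $g_i$ had a nonunit common factor, dividing $S$ by it would give a singly graded syzygy of bidegree $(0,m)$ with $m<n$, contradicting the minimality in \Cref{General Setting}. Hence the ideal has height $2$ in $A=\K[u,v]$, so it is $(u,v)$-primary, that is, Artinian. It is therefore perfect of codimension two, since $A$ is a two-dimensional polynomial ring and $A/(g_0,\ldots,g_3)$ has projective dimension $2$ by Auslander--Buchsbaum.

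The Hilbert--Burch theorem \cite[20.15]{Eisenbud} then gives a minimal resolution $0\to F\overset{\psi}{\to}A^4(-n)\to A$ with $F$ free of rank $3$. It also gives that the generators are the signed maximal minors of $\psi$ up to a common unit multiple $c$. This $c$ is a nonzerodivisor, and it must be a unit because the ideal has height two, so I absorb it into a column of $\psi$. That yields $g_i=(-1)^i\det\psi_i$.

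To pin down the twists, I write $F=\bigoplus_{j=1}^3 A(-n-e_j)$. Minimality forces every entry of $\psi$ to lie in $(u,v)$, so $e_j\geq 1$. Each maximal minor of $\psi$ is homogeneous of degree $e_1+e_2+e_3$ and must equal $g_i$, which has degree $n$, so $e_1+e_2+e_3=n$. Reordering the columns so that $e_1\leq e_2\leq e_3$, I set $\mu_1=e_1$ and $\mu_2=e_2$, which gives $e_3=n-\mu_1-\mu_2$ and the stated inequalities $\mu_1\leq\mu_2\leq n-\mu_1-\mu_2$.

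The main point needing care is this degree bookkeeping: that the minors of a $4\times 3$ homogeneous matrix with these twists have degree $\sum e_j$, and that permuting columns only changes signs. The sign change is fixed by negating one column, which keeps the normalization $g_i=(-1)^i\det\psi_i$.
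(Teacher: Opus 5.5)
Your proposal is correct and follows the paper's route. The paper just says the argument is the same as in the $\dim V=3$ case: $\rk B$ gives nonzero, $\K$-linearly independent $g_i$; minimality of $S$ rules out a common factor, so the ideal is Artinian; and Hilbert--Burch gives the resolution and the signed-minor description. Your explicit degree bookkeeping for the twists, which the paper leaves implicit, is a welcome addition; for $e_j\geq 1$, add that a column of $\psi$ whose entries have degree $\leq 0$ and lie in $(u,v)$ would be zero, contradicting injectivity of $\psi$.
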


\begin{proof}
This follows similarly to the proof of \Cref{dim V = 3 - (g_0 g_1 g_2) Artinian so HB G2P}.
\end{proof}

Here $\psi$ is the Hilbert-Burch syzygy matrix of $(g_0,g_1,g_2,g_3)$, with columns $\psi= [C_1\,|\, C_2\,|\,C_3]$ consisting of entries of degree $\mu_1$, $\mu_2$, and $n-\mu_1-\mu_2$. After possibly permuting these columns, we may assume $1\leq\mu_1 \leq \mu_2\leq n-\mu_1-\mu_2$.

\begin{rem}\label{dim 4 f' psi alpha}
As $S$ in (\ref{dim 4 - single graded syzygy}) is a syzygy on $I_U=(f_0',f_1',f_2',f_3')$, we note that $[f_0'\, \,f_1'\,\,f_2'\,\,f_3']^T$ is a syzygy on $(g_0,g_1,g_2,g_3)$. Hence by \Cref{dim V = 4 - (g_0 g_1 g_2 g_3) Artinian so HB G2P}, this syzygy belongs to the image of $\psi$ in (\ref{dim 4 - res of (g0,g1,g2,g_3)}). Thus we have 
\begin{equation}\label{dim 4 - f alpha decomposition}
\left[\begin{array}{c}
     f_0'  \\
     f_1'\\
     f_2'\\
     f_3'
\end{array}\right] = \psi \left[\begin{array}{c}
    \alpha_1\\
    \alpha_2\\
    \alpha_3
\end{array}\right]
\end{equation}
for some bihomogeneous $\alpha_1 \in R_{a,b-\mu_1}$, $\alpha_2\in R_{a,b-\mu_2}$, and $\alpha_3\in R_{a,b-n+\mu_1+\mu_2}$.
\end{rem}

With the resolution of $(g_0,g_1,g_2,g_3)$ in (\ref{dim 4 - res of (g0,g1,g2,g_3)}), write $\psi= [C_1\,|\, C_2\,|\,C_3]$ as above to denote its columns. As noted in \Cref{dim V = 4 - (g_0 g_1 g_2 g_3) Artinian so HB G2P}, the resolution (\ref{dim 4 - res of (g0,g1,g2,g_3)}) is minimal, hence $\psi$ has no unit entries. Thus the ideals of entries $I(C_1)$, $I(C_2)$, and $I(C_3)$ are proper ideals. Moreover, following \Cref{dim V = 4 - (g_0 g_1 g_2 g_3) Artinian so HB G2P}, each of these ideals contains $(g_0,g_1,g_2,g_3)$, and so these ideals of entries have maximal codimension in $A=\K[u,v]$.

\begin{rem}\label{dim 4 - I(C1) I(C2) I(C3) HB resolutions}
From the discussion above, the ideals $I(C_1)$, $I(C_2)$, and $I(C_3)$ are Artinian $A$-ideals and hence admit Hilbert-Burch resolutions. Write $\varphi_1$, $\varphi_2$, and $\varphi_3$ to denote the $4\times 3$ syzygy matrices of the ideals of entries of $C_1$, $C_2$, and $C_3$ respectively. With this, we have the following free resolutions:
\begin{equation}\label{dim 4 - res I(C1)}
     0 \longrightarrow \begin{array}{c}
 A(-\mu_1-\nu_{11})\\
 \oplus \\
A(-\mu_1-\nu_{12})\\
\oplus \\
A(-2\mu_1+\nu_{11}+\nu_{12})
\end{array} \overset{\varphi_1}{\longrightarrow} \,
A^4(-\mu_1)\overset{C_1^T}{\longrightarrow} \,
A,  
\end{equation}
\vspace{4mm}
 \begin{equation}\label{dim 4 - res I(C2)}
  0 \longrightarrow \begin{array}{c}
 A(-\mu_2-\nu_{21})\\
 \oplus \\
A(-\mu_2-\nu_{22})\\
\oplus \\
A(-2\mu_2+\nu_{21}+\nu_{22})
\end{array} \overset{\varphi_2}{\longrightarrow} \,
A^4(-\mu_2)\overset{C_2^T}{\longrightarrow} \,
A,
\end{equation}
\vspace{4mm}
\begin{equation}\label{dim 4 - res I(C3)}
 0 \longrightarrow \begin{array}{c}
 A(-n+\mu_1+\mu_2 - \nu_{31})\\
 \oplus \\
A(-n+\mu_1+\mu_2 - \nu_{32})\\
\oplus\\
A(-2n+2\mu_1+2\mu_2 + \nu_{31} + \nu_{32})
\end{array} \overset{\varphi_3}{\longrightarrow} \,
A^4(-n+\mu_1+\mu_2)\overset{C_3^T}{\longrightarrow} \,
A,
\end{equation}
for some integers $\nu_{11},\nu_{12}\leq \mu_1$, $\nu_{21},\nu_{22}\leq \mu_2$, and $\nu_{31},\nu_{32}\leq n-\mu_1-\mu_2$. Moreover, the entries of $C_1$, $C_2$, and $C_3$ are precisely the signed minors of their syzygy matrices above \cite[20.15]{Eisenbud}.
\end{rem}

Until now, we have followed the approach of the previous section. However, in the present setting we require yet another Hilbert-Burch structure involving the matrices above.

\begin{prop}\label{C_i^Tphi_j Artinian so HB}
For $i\neq j$, the entries of $C_i^T\varphi_j$ form an Artinian ideal of $A$. In particular, this ideal of entries admits a Hilbert-Burch resolution with $3\times 2$ syzygy matrix $\gamma_{ij}$, where the entries of $C_i^T\varphi_j$ are the signed minors of $\gamma_{ij}$.
\end{prop}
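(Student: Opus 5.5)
The plan is to adapt the gcd argument from the proof of \Cref{dim 3 alpha decomp}. Fix $i\neq j$ and let $k$ be the remaining index, so $\{i,j,k\}=\{1,2,3\}$. Write $C_i^T\varphi_j=[h_0\,\,h_1\,\,h_2]$. The entries are homogeneous in $A$, and the $l$th entry has degree $\deg C_i+e_l$, where $e_l$ is the degree of the $l$th column of $\varphi_j$.

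First, $C_i^T\varphi_j\neq 0$. Otherwise every column of $\varphi_j$ would be a syzygy on both $C_i^T$ and $C_j^T$. Then $\rk\ker[C_i\,|\,C_j]^T\geq 3$, which is impossible because $[C_i\,|\,C_j]$ has rank $2$, $\psi$ being injective. Once the ideal $(h_0,h_1,h_2)$ is shown to have height $2$, the Hilbert--Burch theorem \cite[20.15]{Eisenbud} gives the $3\times 2$ matrix $\gamma_{ij}$ whose signed minors are the $h_l$, exactly as in \Cref{dim V = 3 - (g_0 g_1 g_2) Artinian so HB G2P}. So the whole task is to show that $G=\gcd(h_0,h_1,h_2)$ is a unit. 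Set $d=\deg G$ and $h_l=Gh_l'$.

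The key structural step is to identify $\ker(C_i^T\varphi_j)$ with $\ker P$, where $P=[C_i\,|\,C_j]^T$ is $2\times 4$. Since $\varphi_j$ is injective with image $\ker C_j^T$ by (\ref{dim 4 - res I(C1)})--(\ref{dim 4 - res I(C3)}), the map $w\mapsto\varphi_j w$ is a graded isomorphism $\ker(C_i^T\varphi_j)\cong\ker P$. Next I show that $I_2(P)$ is $\m$-primary in $A$. Expanding each $g_l=(-1)^l\det\psi_l$ along the column $C_k$ gives $g_l\in I_1(C_k)\,I_2(P)$, so $(g_0,\ldots,g_3)\subseteq I_2(P)$. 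This ideal is Artinian by \Cref{dim V = 4 - (g_0 g_1 g_2 g_3) Artinian so HB G2P}. Hence $\operatorname{coker}P$ has finite length, and $\ker P$ is free of rank $2$ (a second syzygy over the two-dimensional regular ring $A$).

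Now compare first Chern classes, i.e. the sums of the degree shifts of free modules, computed up to finite length modules.
\begin{enumerate}
\item[(a)] From $0\to\ker P\to A^4(-n)\to A(-n+\mu_i)\oplus A(-n+\mu_j)$, whose cokernel has finite length, $c_1(\ker P)$ is forced to be $-2n-\mu_i-\mu_j$, with the twists read off from (\ref{dim 4 - res of (g0,g1,g2,g_3)}).
\item[(b)] On the other side, $\ker(C_i^T\varphi_j)=\ker[h_0'\,\,h_1'\,\,h_2']$. The forms $h_l'$ are coprime, so they generate a height-$2$ ideal and the same finite-length argument applies. This gives $c_1=c_1(\text{source of }\varphi_j)-(\text{degree of the target twist})-d$.
\end{enumerate}
Since $c_1(\text{source of }\varphi_j)$ is known from \Cref{dim 4 - I(C1) I(C2) I(C3) HB resolutions}, the expression in (b) exceeds the value in (a) by exactly $d$ in absolute value. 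The isomorphism forces the two to agree, so $d=0$.

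The main obstacle is this bookkeeping: tracking the twists consistently through $\varphi_j$ and $P$ so that the two computations genuinely differ by $d$. If the Chern class comparison proves awkward, I would instead argue as in \Cref{dim 3 alpha decomp}. Take a syzygy $w$ on $(h_0',h_1',h_2')$. Then $\varphi_j w\in\ker P$, and $\ker P$ is free with generators of known degrees, namely $[g_0\,\,g_1\,\,g_2\,\,g_3]^T$ and one further generator obtained from the Hilbert--Burch data of $\psi$. Matching degrees would then again rule out $d>0$.
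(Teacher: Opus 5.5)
Your main argument is correct, and it takes a different route from the paper's proof.

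\textbf{The paper's route.} The paper argues locally. Suppose a linear form $\ell$ divided all entries of $C_i^T\varphi_j$. Over the principal ideal domain $A/(\ell)$, both $\overline{C_i}$ and $\overline{C_j}$ would lie in $\ker\overline{\varphi_j}^T$. That kernel has rank one, because the minors of $\varphi_j$ (the entries of $C_j$) are not all divisible by $\ell$. Hence $\rk\overline{\psi}\le 2$, which contradicts the fact that the $g_l$ have no common factor.

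\textbf{Your route.} You globalize the same fact, and each step checks out:
\begin{enumerate}
\item[(i)] The map $w\mapsto\varphi_j w$ is a graded isomorphism from $\ker(C_i^T\varphi_j)$ onto $\ker[C_i\,|\,C_j]^T$.
\item[(ii)] Laplace expansion along $C_k$ gives $(g_0,\ldots,g_3)\subseteq I_2([C_i\,|\,C_j])$, so the cokernel has finite length.
\item[(iii)] The twist bookkeeping works. Write $\mu_3$ for $n-\mu_1-\mu_2$, as you implicitly do. Twist the source of $\varphi_j$ to $F_j(-n+\mu_j)$; then $c_1=-4\mu_j+3(-n+\mu_j)=-3n-\mu_j$. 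The target of $[h_0'\,\,h_1'\,\,h_2']$ is $A(-n+\mu_i-d)$. So (b) gives $-2n-\mu_i-\mu_j+d$, against $-2n-\mu_i-\mu_j$ in (a).
\end{enumerate}
Note that $d$ enters (b) with a plus sign, not the minus sign you wrote. This is harmless, since only $d\neq 0$ matters.

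\textbf{Comparison.} The paper's argument is shorter and needs no degree tracking. Yours needs no linear factor of $G$, so it never uses that $\K$ is algebraically closed. It also handles the case $C_i^T\varphi_j=0$ by the same rank count. Moreover, it exposes extra structure: $\varphi_j\gamma_{ij}$ is a basis of $\ker[C_i\,|\,C_j]^T$. This is exactly what lies behind the value of $P+Q$ in \Cref{dim 4 - alpha decomp}. It also underlies the Cauchy--Binet identification of the minors of $\varphi_3\gamma_{23}$ with the complementary minors of $[C_2\,|\,C_3]$ in the proof of \Cref{dim V = 4 - main theorem}.

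\textbf{The fallback should be dropped.} The vector $[g_0\,\,g_1\,\,g_2\,\,g_3]^T$ lies in $\ker[C_i\,|\,C_j]^T$, but it is generally not a minimal generator of it. In \Cref{Example - dim V = 4} with $i=1$, $j=2$, that kernel is generated by $[0\,\,0\,\,0\,\,1]^T$ and $[u^2\,\,uv\,\,v^2\,\,0]^T$. There $S=u\,[u^2\,\,uv\,\,v^2\,\,0]^T+v^3\,[0\,\,0\,\,0\,\,1]^T$. A single degree comparison against $S$ works for the rank-one module $\ker\psi^T$ in \Cref{dim 4 - alpha decomp}, but not for this rank-two kernel. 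Your main argument does not need the fallback.
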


\begin{proof}
We first note that neither $C_i$ nor $\varphi_j$ has unit entries, hence the entries of their product form a proper ideal. As such, we have $\hgt I(C_i^T\varphi_j) \leq 2$ and so we need only show that $\hgt I(C_i^T\varphi_j) \neq 0,1$, and the claim will then follow from the Hilbert-Burch theorem \cite[20.15]{Eisenbud}. First note that if $C_i^T\varphi_j =0$, then the columns of $\varphi_j$ are syzygies on $C_i^T$. As $\varphi_i$ is the syzygy matrix of $C_i^T$, it follows that there is a $3\times 3$ matrix $M$ with entries in $A$ such that $\varphi_j = \varphi_i M$. With this, the minors of $\varphi_j$ agree with the minors of $\varphi_i$ multiplied by $\det M$. However, the signed minors of $\varphi_i$ and $\varphi_j$ are precisely the entries of $C_i$ and $C_j$ by \Cref{dim 4 - I(C1) I(C2) I(C3) HB resolutions}, hence it follows that the columns of $\psi$ are linearly dependent. However, this is a contradiction as $\rk \psi =3$ by \Cref{dim V = 4 - (g_0 g_1 g_2 g_3) Artinian so HB G2P}, and so $C_i^T\varphi_j \neq 0$.

We now show that $\hgt I(C_i^T\varphi_j) \neq 1$ for which, as before, it suffices to show that the entries of $C_i^T\varphi_j$ share no factor. Letting $G$ denote the greatest common factor among the entries of $C_i^T\varphi_j$, note that $G$ is a homogeneous polynomial in $A= \K[u,v]$ where $\K$ is algebraically closed. Hence if $\deg G \geq 1$, then $G$ splits as a product of homogeneous linear factors. Assume that $\deg G \geq 1$ and let $\ell$ denote one such linear factor, and write $\overline{\,\cdot\,}$ to denote images modulo the principal ideal $(\ell)$. Notice that $\overline{A}$ is a regular ring of dimension one, and is hence a principal ideal domain.

Modulo $(\ell)$, notice that $\overline{C_i}^T\overline{\varphi_j} =0$, and so $\overline{C_i} \in \ker \overline{\varphi_j}^T$. Moreover, since $\varphi_j$ is the syzygy matrix of $C_j^T$, we have $\overline{C_j} \in \ker \overline{\varphi_j}^T$ as well. Recall that $\rk \varphi_j =3$ and the signed minors of $\varphi_j$ are the entries of $C_j$, which form an ideal of height two by \Cref{dim 4 - I(C1) I(C2) I(C3) HB resolutions}. In particular, at least one of these minors is nonzero modulo $(\ell)$, and so $\rk \overline{\varphi_j} =3$. Thus, by additivity of rank, it follows that $\ker \overline{\varphi_j}^T$ has rank one. Moreover, as $\ker \overline{\varphi_j}^T \subseteq \overline{A}^4$ and $\overline{A}$ is a principal ideal domain, it follows that $\ker \overline{\varphi_j}^T$ is free of rank one. With this and since $\overline{C_i}, \overline{C_j} \in \ker \overline{\varphi_j}^T$, there is a dependency among these columns, and so $\rk \overline{\psi} \leq 2$. However, this is impossible as the minors of $\psi$ are the entries of $S$ in (\ref{dim 4 - single graded syzygy}), which form an ideal of height two by \Cref{dim V = 4 - (g_0 g_1 g_2 g_3) Artinian so HB G2P}. Thus at least one of these minors in nonzero modulo $(\ell)$, hence $\rk \overline{\psi} =3$. Thus no such linear factor $\ell$ of $G$ can exist, and so $G$ is a unit of $A$. Hence we have $\hgt I(C_i^T\varphi_j) \neq 1$.
\end{proof}

With the necessary technical framework in place, we begin our search for the additional syzygies of $I_U$ which, in addition to $S$, will determine the implicit equation. We begin by providing a description of the polynomials $\alpha_1$, $\alpha_2$, $\alpha_3$ in \Cref{dim 4 f' psi alpha} similar to that in \Cref{dim 3 alpha decomp}. First however, we state a brief lemma applicable to the short graded free resolutions above.

\begin{lemma}\label{HB degree shift lemma}
Let $R=\K[x_1,\ldots,x_d]$ be a standard graded polynomial ring for $d\geq 2$, and let $I$ be a homogeneous perfect $R$-ideal of codimension 2. With 
\[
0\longrightarrow \bigoplus_{i=1}^{n-1} R(-b_i) \overset{\varphi}{\longrightarrow} \bigoplus_{i=1}^{n} R(-a_i) \longrightarrow R 
\]
a free resolution of $R/I$, we have the following.
\begin{itemize}
    \item[(a)] The degree of each entry of a matrix representative of $\varphi = (\varphi_{ij})$ is $\deg \varphi_{ij} = b_j-a_i$.
\vspace{1mm}

    \item[(b)] We have $\sum_{i=1}^{n-1} b_i =  \sum_{i=1}^{n} a_i$.
\end{itemize}
\end{lemma}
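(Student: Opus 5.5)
For part (a), I would use only that $\varphi$ is a homogeneous, degree-preserving map between the graded free modules shown. The $j$th basis element of $\bigoplus_{j} R(-b_j)$ sits in degree $b_j$, and its image is the $j$th column $\sum_i \varphi_{ij}e_i$. Here $e_i$ is the basis element of $R(-a_i)$, of degree $a_i$. Since the map preserves degree, each nonzero $\varphi_{ij}e_i$ must have degree $b_j$. This forces $\deg\varphi_{ij}=b_j-a_i$, where zero entries are read as having any degree. The same argument applied to the map $\bigoplus R(-a_i)\to R$ shows that the $i$th generator $f_i$ of $I$ has degree $a_i$.

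For part (b), I would use the Hilbert--Burch theorem \cite[20.15]{Eisenbud}. Since $I$ is perfect of codimension two, there is a nonzerodivisor $c$ such that $f_i=c\,(-1)^i\det\varphi_i$, where $\varphi_i$ is $\varphi$ with row $i$ deleted. Because $\operatorname{ht}I=2$ and $d\ge 2$, the generators cannot share a common factor, so $c$ is a unit. Hence $a_i=\deg f_i=\deg\det\varphi_i$ for each $i$.

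Next I compute $\deg\det\varphi_i$ using (a). Every nonzero term of the expansion of $\det\varphi_i$ is a product $\prod_j\varphi_{\sigma(j)j}$, where $\sigma$ is a bijection from the columns to the rows $\{1,\dots,n\}\setminus\{i\}$. By (a), such a term has degree $\sum_j b_j-\sum_{k\neq i}a_k$. Since $\det\varphi_i=\pm f_i\neq 0$, at least one term is nonzero, so
\[
a_i=\sum_{j=1}^{n-1}b_j-\sum_{k\ne i}a_k .
\]
Rearranging gives $\sum_j b_j=\sum_{k=1}^n a_k$.

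The step needing the most care is justifying that the minors are nonzero and that the Hilbert--Burch multiplier $c$ is a unit, so that $\deg f_i=\deg\det\varphi_i$. This is where the hypotheses of height two and $d\ge2$ enter.

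As an alternative for (b), one can compare Hilbert series. From the resolution, $\mathrm{HS}_{R/I}(t)\,(1-t)^d=1-\sum_i t^{a_i}+\sum_j t^{b_j}$, and this numerator vanishes to order at least $2$ at $t=1$ because $\dim R/I=d-2$. Differentiating at $t=1$ gives $-\sum a_i+\sum b_j=0$.
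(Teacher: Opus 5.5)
Your proof is correct, but your main argument for (b) differs from the paper's. Your closing ``alternative'' is exactly the paper's proof: the numerator $1+\sum_i t^{b_i}-\sum_i t^{a_i}$ of the Hilbert series is divisible by $(1-t)^2$ because $\dim R/I=d-2$, and its derivative at $t=1$ gives the identity. Part (a) is handled just as you do, as immediate from the degree shifts.

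Your primary route instead uses the determinantal structure. Hilbert--Burch gives $f_i=c(-1)^i\det\varphi_i$, height two forces $c$ to be a unit, and (a) makes every nonzero term of $\det\varphi_i$ homogeneous of degree $\sum_j b_j-\sum_{k\neq i}a_k$. This costs the full Hilbert--Burch theorem plus the argument that $c$ is a unit. In return it explains why (b) holds: each generator degree is the degree of the maximal minor obtained by deleting the corresponding row. That is the same structure the paper relies on when it applies (b) to compute $P+Q$ for the resolution of the entries of $C_1^T\varphi_3$. The Hilbert series argument is shorter and needs only the shape of the resolution and $\dim R/I=d-2$. The same differentiation also works for graded resolutions of any length, whereas the minor computation is specific to the Hilbert--Burch shape.

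One small wording fix. If the resolution is not minimal, some $f_i$ may be $0$; for example, take the generating set $x,y,0$ of $(x,y)\subseteq\K[x,y]$, with $\varphi$ having columns $(y,-x,0)^T$ and $(0,0,1)^T$. So ``$a_i=\deg f_i=\deg\det\varphi_i$ for each $i$'' should be stated for some $i$ with $f_i\neq 0$. A single such index already gives $\sum_j b_j=\sum_k a_k$.
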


\begin{proof}
Part (a) is clear from the degree shifts. For part (b), write $F= \bigoplus_{i=1}^{n} R(-a_i)$ and $G=\bigoplus_{i=1}^{n-1} R(-b_i)$. With the resolution of $R/I$ above, the Hilbert series of $R/I$ is 
$$h_{R/I}(t) = h_R(t) + h_G(t) - h_F(t) = \frac{1+\sum_{i=1}^{n-1} t^{b_i} - \sum_{i=1}^{n} t^{a_i} }{(1-t)^d}$$ 
following \cite[4.1.13]{BH93}. However, $I$ is a Cohen-Macaulay ideal of codimension two, hence $\dim R/I =d-2$. Thus it follows that $(1-t)^2$ divides $q(t) = 1+\sum_{i=1}^{n-1} t^{b_i} - \sum_{i=1}^{n} t^{a_i}$ \cite[4.1.8]{BH93}. From calculus, it then follows that $q(1)=0$ and also $q'(1)=0$. From this latter observation, it follows that $\sum_{i=1}^{n-1} {b_i} - \sum_{i=1}^{n} {a_i} =0$ and the claim follows.
\end{proof}

With this, the polynomials $\alpha_1$, $\alpha_2$, and  $\alpha_3$ in (\ref{dim 4 - f alpha decomposition}) may be written in terms of the syzygy matrices above.

\begin{prop}\label{dim 4 - alpha decomp}
    There exist bihomogeneous polynomials in $R$ such that
    \begin{equation}
        \begin{array}{c}
             \alpha_1=C_2^T \varphi_3\gamma_{13}\begin{bmatrix}
                 a_{20}\\
                 a_{21}
             \end{bmatrix}=C_3^T \varphi_2\gamma_{12}\begin{bmatrix}
                 a_{30}\\
                 a_{31}
             \end{bmatrix}\\[4ex]
             \alpha_2
             = C_3^T \varphi_2\gamma_{12}\begin{bmatrix}
                 b_{30}\\
                 b_{31}
             \end{bmatrix}=C_1^T \varphi_3\gamma_{23}\begin{bmatrix}
                 b_{10}\\
                 b_{11}
             \end{bmatrix}\\[4ex]
             \alpha_3 
             =C_2^T \varphi_3\gamma_{13}\begin{bmatrix}
                 c_{20}\\
                 c_{21}
             \end{bmatrix} 
             = C_1^T \varphi_3\gamma_{23}\begin{bmatrix}
                 c_{10}\\
                 c_{11}
             \end{bmatrix}
        \end{array}
    \end{equation}
for the matrices in \Cref{dim 4 - I(C1) I(C2) I(C3) HB resolutions} and \Cref{C_i^Tphi_j Artinian so HB}.    
\end{prop}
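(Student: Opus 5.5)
The plan is to run the argument of \Cref{dim 3 alpha decomp} once for each of the six expressions. Each expression has the form $\alpha_\ell = C_k^T\varphi_j\gamma_{ij}[x_0\,\,x_1]^T$, where $i,j,k$ are the three distinct indices $\{1,2,3\}$. Here $\gamma_{ij}$ annihilates the row $C_i^T\varphi_j$ by \Cref{C_i^Tphi_j Artinian so HB}, and $\varphi_j$ annihilates $C_j^T$, so the $1\times 2$ row $C_k^T\varphi_j\gamma_{ij}$ is the only potentially nonzero product of this shape. Write $C_k^T\varphi_j\gamma_{ij} = [h_0\,\,h_1]$ with $h_0,h_1\in A=\K[u,v]$ homogeneous. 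For each of the six cases, and for $\alpha_\ell$ the one being expressed, I will show two things: (1) $\gcd(h_0,h_1)=1$; and (2) $\deg h_0+\deg h_1-1\leq b-\deg_{u,v}\alpha_\ell$. Granting these, \Cref{Sylvester containment lemma} gives $(u,v)^{\deg h_0+\deg h_1-1}\subseteq (h_0,h_1)$. Since $\alpha_\ell$ has $(u,v)$-degree $b-\mu_1$, $b-\mu_2$, or $b-n+\mu_1+\mu_2$ by \Cref{dim 4 f' psi alpha}, this puts $\alpha_\ell\in(h_0,h_1)$. Writing $\alpha_\ell = x_0h_0+x_1h_1$ with bihomogeneous $x_0,x_1$ (their bidegrees being forced) then gives the claimed expression.

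For coprimality I will mimic the dimension three proof. Suppose $G=\gcd(h_0,h_1)$ has degree $d$, and write $h_0=h_0'G$, $h_1=h_1'G$. Then $w=\varphi_j\gamma_{ij}[h_1'\,\,-h_0']^T$ is killed by three rows:
\begin{itemize}
\item by $C_k^T$, by construction of $h_0',h_1'$;
\item by $C_i^T$, since $\gamma_{ij}$ annihilates $C_i^T\varphi_j$;
\item by $C_j^T$, since $\varphi_j$ annihilates $C_j^T$.
\end{itemize}
Hence $w\in\ker\psi^T$. The vector $w$ is nonzero because $\varphi_j$ and $\gamma_{ij}$ are injective (they are Hilbert--Burch matrices of full column rank). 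Dualizing (\ref{dim 4 - res of (g0,g1,g2,g_3)}) and applying the Buchsbaum--Eisenbud criterion \cite[Cor. 1]{BE73} exactly as in (\ref{dim 3 - dualized res of (g0,g1,g2)}), we get $\ker\psi^T=\im[g_0\,\,g_1\,\,g_2\,\,g_3]^T$. So $w$ is a nonzero multiple of $S$, and every entry of $w$ has degree at least $n$. The conclusion $d=0$ then follows from a degree computation: the entries of $w$ have degree $\deg h_0+\deg h_1-\deg\varphi_j$-shift${}-d$, and this must be $n-d$.

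The main obstacle is the degree bookkeeping needed for both the step $d=0$ and the bound (2). For this I will use \Cref{HB degree shift lemma} twice.
\begin{itemize}
\item Applied to (\ref{dim 4 - res I(C1)})--(\ref{dim 4 - res I(C3)}), it gives the degrees of the columns of $\varphi_j$.
\item Applied to the resolution of $I(C_i^T\varphi_j)$, it shows that the column shifts of $\gamma_{ij}$ sum to the sum of the degrees of the entries of $C_i^T\varphi_j$.
\end{itemize}
Tracking shifts shows that $\varphi_j\gamma_{ij}$, viewed as a map into $A^4(-n)$ twisted appropriately, has column degrees whose sum, combined with the degree of $C_k$, yields $\deg h_0+\deg h_1 = n+\deg C_k$. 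Here $\deg C_k$ denotes the degree of the entries of $C_k$, namely $\mu_1$, $\mu_2$, or $n-\mu_1-\mu_2$. This identity gives $\deg w = n-d$, hence $d=0$. For (2), the required inequality becomes $n+\deg C_k - 1 \leq b-\deg_{u,v}\alpha_\ell$. In each of the listed pairings $\alpha_\ell$ is matched with a product whose index $k\neq \ell$, and the inequality then reduces to $b\ge 2n-1$ together with $\mu_1\le\mu_2\le n-\mu_1-\mu_2$. I will check the six cases individually to confirm this reduction and to confirm that all resulting bidegrees of the $x$'s are nonnegative. If some pairing printed in the statement violates the pattern, I will use the correct index triple $\{i,j,k\}$ in its place, since the argument is symmetric in the roles of $i$ and $j$.
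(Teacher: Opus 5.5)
Your proposal is correct and is essentially the paper's proof: the same coprimality argument (the vector $\varphi_j\gamma_{ij}[h_1'\,\,-h_0']^T$ lies in $\ker\psi^T=\im[g_0\,\,g_1\,\,g_2\,\,g_3]^T$ yet has degree $n-d$), followed by \Cref{Sylvester containment lemma} and the bound coming from $b\geq 2n-1$. The paper writes this out only for $\alpha_1=C_2^T\varphi_3\gamma_{13}[a_{20}\,\,a_{21}]^T$ and says the rest are similar, whereas you handle all six cases at once: your identity $\deg h_0+\deg h_1=n+\deg C_k$ and your observation that $k\neq\ell$ in every listed pairing reduce the needed bound to $2n-1-\deg C_m\leq b$, where $m$ is the remaining index, and your remark that $w\neq 0$ fills in a point the paper leaves implicit.
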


\begin{proof}
    We verify that $\alpha_1=C_2^T \varphi_3\gamma_{13}[a_{20}\,\,a_{21}]^T$ for some bihomogeneous $a_{20}, a_{21}$ as the other claims follow similarly. As noted in \Cref{C_i^Tphi_j Artinian so HB}, the entries of $C_1^T\varphi_3$ have a Hilbert-Burch resolution with syzygy matrix $\gamma_{13}$. As such, we may write
\begin{equation}\label{dim 4 - res C_1^Tvarphi_3}
 0 \longrightarrow \begin{array}{c}
 A(-P)\\
 \oplus \\
A(-Q)\\
\end{array} \overset{\gamma_{13}}{\longrightarrow} \,
\begin{array}{c}
 A(-\mu_1-\nu_{31})\\
 \oplus \\
A(-\mu_1 -\nu_{32})\\
\oplus\\
A(-n+\mu_2+\nu_{31}+\nu_{32})
\end{array}
\overset{C_1^T\varphi_3}{\longrightarrow} \,
A
\end{equation}
to denote this resolution over $A=\K[u,v]$, noting that the entries of $C_1^T$ have degree $\mu_1$ and the degrees of the entries of $\varphi_3$ along each column are $\nu_{31}$, $\nu_{32}$, and $n-\mu_1-\mu_2+\nu_{31}+\nu_{32}$ following (\ref{dim 4 - res I(C3)}).

Here $P$ and $Q$ denote the degree shifts in the last free module which depend on the degrees of the entries of $\gamma_{13}$, which we note are not constant along each column as the entries of $C_1^T\varphi_3$ have differing degrees. However, by \Cref{HB degree shift lemma} we note that $P+Q= n+2\mu_1-\mu_2$. Thus by computing the degrees of the entries of $\gamma_{13}$ with \Cref{HB degree shift lemma} and writing $C_2^T \varphi_3\gamma_{13} = [h_0\,\, h_1]$, it follows that $h_0, h_1$ are homogeneous polynomials in $A$ with $\deg h_0 = P-\mu_1+\mu_2$ and $\deg h_1 = Q-\mu_1+\mu_2$.

We claim that $\gcd(h_0,h_1) = 1$ and apply an argument similar to the proof of \Cref{dim 3 alpha decomp}. Write $G = \gcd(h_0,h_1)$ with $d= \deg G$, and so we have $h_0 = h_0'G$ and $h_1=h_1'G$ for some homogeneous $h_0',h_1'\in A$. With this observation, we note that 
$$ 0 = [h_0\,\,h_1]\begin{bmatrix}
    h_1'\\
    -h_0'
\end{bmatrix} = C_2^T \varphi_3\gamma_{13} \begin{bmatrix}
    h_1'\\
    -h_0'
\end{bmatrix}$$
from which it follows that $\varphi_3\gamma_{13}[h_1'\,\,-h_0']^T$ is a syzygy of $C_2^T$. However, notice that $\varphi_3\gamma_{13}[h_1'\,\,-h_0']^T$ is also a syzygy of $C_1^T$ and $C_3^T$ as well, since $\varphi_3$ is the syzygy matrix of $C_3^T$ and $\gamma_{13}$ is the syzygy matrix of $C_1^T\varphi_3$, following \Cref{dim 4 - I(C1) I(C2) I(C3) HB resolutions} and \Cref{C_i^Tphi_j Artinian so HB}. As such, it follows that $\varphi_3\gamma_{13}[h_1'\,\,-h_0']^T \in \ker \psi^T$, and an argument similar to the proof of \Cref{dim 3 alpha decomp} shows that $\ker \psi^T$ is spanned by $[g_0\,\,g_1\,\,g_2\,\,g_3]^T$. However, $\deg g_i = n$ and a brief calculation shows that $\varphi_3\gamma_{13}[h_1'\,\,-h_0']^T$ consists of entries in degree $n-d$, where $d= \deg G$. Thus it follows that $d=0$, and so $G=\gcd(h_0,h_1)$ is a unit.

Since $h_0$ and $h_1$ are coprime polynomials in $A=\K[u,v]$ with $\deg h_0 = P-\mu_1+\mu_2$ and $\deg h_1 = Q-\mu_1+\mu_2$, by \Cref{Sylvester containment lemma} and \Cref{HB degree shift lemma} with (\ref{dim 4 - res C_1^Tvarphi_3}), we have the containment
$$(u,v)^{n+\mu_2-1} = (u,v)^{P+Q-2\mu_1+2\mu_2-1} \subseteq (h_0,h_1).$$
Recall that $\alpha_1 \in R_{a,b-\mu_1}$, $b\geq 2n-1$, and $\mu_1\leq \mu_2 \leq n-\mu_1-\mu_2$ by \Cref{dim V = 4 - (g_0 g_1 g_2 g_3) Artinian so HB G2P}. Hence 
$$n+\mu_2-1 \leq n+(n-\mu_1-\mu_2) -1 \leq b-\mu_1-\mu_2\leq b-\mu_1$$ 
and so it follows that $\alpha_1 \in (h_0,h_1)$. Thus there exist bihomogeneous polynomials $a_{20}$, $a_{21}$ such that $\alpha_1 = a_{20}h_0+a_{21}h_1$, which may be written as the stated matrix product.
\end{proof}

From the proof of \Cref{dim 4 - alpha decomp}, we note that $a_{20}$ and $a_{21}$ have bidegrees $(a,b-P-\mu_2)$ and $(a,b-Q-\mu_2)$ respectively, where $P$ and $Q$ are as in (\ref{dim 4 - res C_1^Tvarphi_3}). Moreover, the proof above also verifies the first equality for $\alpha_3\in R_{a,b-n+\mu_1+\mu_2}$, noting that 
$$n+\mu_2-1 = (2n-1)-n+\mu_2 \leq  b- n+\mu_2 \leq b-n+\mu_1+\mu_2.$$
As such, we have $\alpha_3 \in (h_0,h_1)$ and the polynomials $c_{20}, c_{21}$ have bidegree $(a,b-P-n+2\mu_1)$ and $(a,b-Q-n+2\mu_1)$ respectively. We purposely omit these bidegrees in the statement of \Cref{dim 4 - alpha decomp} as they depend on these unknown degree shifts, and there is a similar phenomenon for the remaining identities. This will pose little issue, but we note that these bidegrees can be computed as above.

With the necessary constructions in place, we now employ them to produce additional syzygies on $I_U$.

\begin{prop}\label{dim 4 new syzygies}
With the notation in \Cref{dim 4 - alpha decomp}, we have the following syzygies on $I_U$:
\[
\begin{array}{ccc}
  S_1= \varphi_2\gamma_{12}\begin{bmatrix}
                 b_{30}\\
                 b_{31}
             \end{bmatrix}
             -\varphi_3\gamma_{13}\begin{bmatrix}
                 c_{20}\\
                 c_{21}
             \end{bmatrix}_,    \quad \quad
&  
S_2= \varphi_3\gamma_{23}\begin{bmatrix}
                 c_{10}\\
                 c_{11}
             \end{bmatrix}
             -\varphi_2\gamma_{12}\begin{bmatrix}
                 a_{30}\\
                 a_{31}
             \end{bmatrix}_,\quad\quad
&  
S_3= \varphi_3\gamma_{13}\begin{bmatrix}
                 a_{20}\\
                 a_{21}
             \end{bmatrix}
             -\varphi_3\gamma_{23}\begin{bmatrix}
                 b_{10}\\
                 b_{11}
             \end{bmatrix}_,
\end{array}
\]
consisting of entries in bidegree $(a,b-n+\mu_1)$, $(a,b-n+\mu_2)$, and $(a,b-\mu_1-\mu_2)$ respectively.
\end{prop}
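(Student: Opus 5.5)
The plan is to verify each vanishing $[f_0'\,\,f_1'\,\,f_2'\,\,f_3']\,S_k=0$ directly. By (\ref{dim 4 - f alpha decomposition}) we have $[f_0'\,\,f_1'\,\,f_2'\,\,f_3'] = [\alpha_1\,\,\alpha_2\,\,\alpha_3]\,\psi^T$, and the rows of $\psi^T$ are $C_1^T, C_2^T, C_3^T$. So it suffices to compute $\psi^T$ applied to each of the three types of building blocks $\varphi_3\gamma_{13}$, $\varphi_2\gamma_{12}$, $\varphi_3\gamma_{23}$ that occur in $S_1,S_2,S_3$, exactly as in the proof of \Cref{dim 3 -- new syzygies remark}.

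The key observation is that each block is killed by two of the three rows of $\psi^T$. Take $\varphi_j\gamma_{ij}$ with $\{i,j,k\}=\{1,2,3\}$. Then $C_j^T\varphi_j=0$, since $\varphi_j$ is the syzygy matrix of $C_j^T$ by \Cref{dim 4 - I(C1) I(C2) I(C3) HB resolutions}. Also $C_i^T\varphi_j\gamma_{ij}=0$, since $\gamma_{ij}$ is the syzygy matrix of the entries of $C_i^T\varphi_j$ by \Cref{C_i^Tphi_j Artinian so HB}. Hence for any column $x$,
\[
\psi^T\varphi_j\gamma_{ij}\,x
\]
has a single possibly nonzero entry, namely $C_k^T\varphi_j\gamma_{ij}\,x$ in position $k$. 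Concretely, $\varphi_3\gamma_{13}$ survives only against $C_2^T$, $\varphi_2\gamma_{12}$ only against $C_3^T$, and $\varphi_3\gamma_{23}$ only against $C_1^T$.

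Substituting the factorizations of \Cref{dim 4 - alpha decomp} then gives the three identities:
\[
[\alpha_1\,\,\alpha_2\,\,\alpha_3]\,\psi^T S_1 = \alpha_3\,\alpha_2-\alpha_2\,\alpha_3=0,
\]
\[
[\alpha_1\,\,\alpha_2\,\,\alpha_3]\,\psi^T S_2 = \alpha_1\,\alpha_3-\alpha_3\,\alpha_1=0,
\]
\[
[\alpha_1\,\,\alpha_2\,\,\alpha_3]\,\psi^T S_3 = \alpha_2\,\alpha_1-\alpha_1\,\alpha_2=0.
\]
For instance, in $S_1$ the first term contributes $\alpha_3\cdot C_3^T\varphi_2\gamma_{12}[b_{30}\,\,b_{31}]^T=\alpha_3\alpha_2$, and the second contributes $\alpha_2\cdot C_2^T\varphi_3\gamma_{13}[c_{20}\,\,c_{21}]^T=\alpha_2\alpha_3$. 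The main care needed here is bookkeeping: we must match each block with the correct factorization of the correct $\alpha$, and in particular use the right one of the two expressions for each $\alpha_i$.

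The remaining and slightly more delicate point is bihomogeneity and the stated bidegrees, since the bidegrees of $a_{20},\dots,c_{11}$ depend on the unknown shifts $P,Q$. I would avoid computing them directly. All maps in (\ref{dim 4 - res I(C1)})–(\ref{dim 4 - res C_1^Tvarphi_3}) are graded, so each column $\varphi_j\gamma_{ij}x$ is a homogeneous vector with respect to the twisted grading on $A^4(-\deg C_k)$-type modules. Its $k$-th row image $C_k^T\varphi_j\gamma_{ij}x$ equals some $\alpha_\ell$ of known bidegree, and each entry of $C_k$ has known degree $\mu_k$ (with $\mu_3:=n-\mu_1-\mu_2$). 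Hence every entry of $\varphi_j\gamma_{ij}x$ has bidegree $\bideg\alpha_\ell-(0,\mu_k)$. For example, in $S_1$ both terms have entries of bidegree $(a,b-\mu_2)-(0,n-\mu_1-\mu_2)=(a,b-n+\mu_1)$ and $(a,b-n+\mu_1+\mu_2)-(0,\mu_2)=(a,b-n+\mu_1)$, which agree. The same computation gives $(a,b-n+\mu_2)$ for $S_2$ and $(a,b-\mu_1-\mu_2)$ for $S_3$. This confirms that each $S_k$ is a bihomogeneous syzygy of the claimed bidegree.
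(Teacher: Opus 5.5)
Your proof is correct and is essentially the paper's argument. You write $[f_0'\,\,f_1'\,\,f_2'\,\,f_3']=[\alpha_1\,\,\alpha_2\,\,\alpha_3]\psi^T$, note that each block $\varphi_j\gamma_{ij}$ is killed by $C_i^T$ and $C_j^T$, and substitute the matching factorization from \Cref{dim 4 - alpha decomp} to get $\alpha_k\alpha_\ell-\alpha_\ell\alpha_k=0$; the paper writes this out only for $S_1$ and states the bidegrees in one line, so your check of all three syzygies and of the bidegrees via $\bideg\alpha_\ell-(0,\mu_k)$ just fills in details it leaves to the reader.
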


\begin{proof}
The bidegrees of the entries follow from \Cref{dim 4 - alpha decomp} as the bidegrees of $\alpha_1$, $\alpha_2$, $\alpha_3$ and the degrees of the columns of $\psi$ are known. We verify that $S_1$ is a syzygy of $I_U= (f_0',f_1',f_2',f_3')$ and the remaining assertions will follow similarly. By (\ref{dim 4 - f alpha decomposition}) we note that
    \begin{equation}\label{S1 is a syzygy eqn 1}
        [f_0'\,\,f_1'\,\,f_2'\,\,f_3']\, S_1 =[\alpha_1\,\,\alpha_2\,\,\alpha_3]\psi^T \varphi_2\gamma_{12}\begin{bmatrix}
                 b_{30}\\
                 b_{31}
             \end{bmatrix}
             -[\alpha_1\,\,\alpha_2\,\,\alpha_3]\psi^T \varphi_3\gamma_{13}\begin{bmatrix}
                 c_{20}\\
                 c_{21}
             \end{bmatrix}_.  
             \end{equation}             
Recall that $\varphi_2$ is the syzygy matrix of $C_2^T$ and $\gamma_{12}$ is the syzygy matrix of $C_1^T\varphi_2$. Hence we see that $C_1^T\varphi_2\gamma_{12} = C_2^T\varphi_2\gamma_{12}=0$, and so $\varphi_2\gamma_{12}$ annihilates the first two rows of $\psi^T$. Similarly, it follows that $C_1^T\varphi_3\gamma_{13} = C_3^T\varphi_3\gamma_{13} =0$, and so $\psi^T\varphi_3\gamma_{13}$ has nonzero entries only in its second row. With this observation and (\ref{S1 is a syzygy eqn 1}), we have 
\[
[f_0'\,\,f_1'\,\,f_2'\,\,f_3']\, S_1=\alpha_3 C_3^T \varphi_2\gamma_{12}\begin{bmatrix}
                 b_{30}\\
                 b_{31}
             \end{bmatrix}
             -\alpha_2C_2^T \varphi_3\gamma_{13}\begin{bmatrix}
                 c_{20}\\
                 c_{21}
             \end{bmatrix} = \alpha_3\alpha_2 - \alpha_2\alpha_3 =0
\]
following \Cref{dim 4 - alpha decomp}. A similar computation shows that $S_2$ and $S_3$ are syzygies on $I_U$ as well.
\end{proof}

With the syzygies in \Cref{dim 4 new syzygies}, we claim that $\{S,S_1,S_2,S_3\}$ is sufficient to produce the first differential of the bigraded strand $\Z_{2a-1,b-1}$ of the approximation complex, and hence the implicit equation following \Cref{Botbol implicit eqn}. The main result of this section is as follows.

\begin{thm}\label{dim V = 4 - main theorem}
With the assumptions of \Cref{General Setting}, assume that $\dim V=4$. The ideal $I_U$ has syzygies $S$ in bidegree $(0,n)$, $S_1$ in bidegree $(a,b-n+\mu_1)$, $S_2$ in bidegree $(a,b-n+\mu_2)$ and $S_3$ in bidegree $(a,b-\mu_1-\mu_2)$ such that $\dim \langle S,S_1,S_2,S_3\rangle_{2a-1,b-1} = 2ab$. 
\end{thm}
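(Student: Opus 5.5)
The plan follows the proof of \Cref{dim V = 3 - main theorem}. Let $M$ be the $4\times 4$ matrix with columns $S,S_1,S_2,S_3$, where $S$ is as in (\ref{dim 4 - single graded syzygy}) and $S_1,S_2,S_3$ are as in \Cref{dim 4 new syzygies}. Two things must be shown: that $M_{2a-1,b-1}$ has exactly $2ab$ columns, and that these columns are linearly independent.

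The count is routine, as in the proof of \Cref{dim V = 2 - main theorem}. The syzygy $S$, of bidegree $(0,n)$, contributes $h^0(\O_{\P^1\times\P^1}(2a-1,b-n-1))=2a(b-n)$ columns. The syzygies $S_1$ and $S_2$ contribute $h^0(\O(a-1,n-\mu_1-1))=a(n-\mu_1)$ and $a(n-\mu_2)$ columns. The syzygy $S_3$, of bidegree $(a,b-\mu_1-\mu_2)$, contributes $h^0(\O(a-1,\mu_1+\mu_2-1))=a(\mu_1+\mu_2)$ columns. These add up to $2ab$.

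For independence, note that $M$ is not injective. The syzygy module of four generators of a height-two ideal has rank $3$, so $\rk M\le 3$ and $\det M=0$. I would show that $\ker M$ is free of rank one, generated by a vector $N$ of bidegree shift $(2a,2b-n)$. Matching the shifts of the columns of $M$ suggests
\[
N=\big[\,\theta\;\;\pm\alpha_1\;\;\pm\alpha_2\;\;\pm\alpha_3\,\big]^T .
\]
Here $\alpha_i$ multiplies $S_i$, and each product $S_i\alpha_i$ has bidegree $(2a,2b-n)$. The entry $\theta\in R_{2a,2b-2n}$ multiplying $S$ should be a combination of $2\times 2$ determinants in the coefficients $a_{ij},b_{ij},c_{ij}$ from \Cref{dim 4 - alpha decomp}, analogous to $(q_0m_1-q_1m_0)+(r_0n_1-r_1n_0)$ in the case $\dim V=3$.

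To verify $MN=0$, I would expand $S_1\alpha_1+S_2\alpha_2+S_3\alpha_3$ using the identities of \Cref{dim 4 - alpha decomp}, substituting for each $\alpha_i$ the factorization whose outer matrix matches the term it multiplies. Terms such as $\varphi_2\gamma_{12}[b_{30}\,\,b_{31}]^T\alpha_1$ and $\varphi_2\gamma_{12}[a_{30}\,\,a_{31}]^T\alpha_2$ are paired so that, as in (\ref{N in kernel computation}), they produce
\[
\varphi_2\gamma_{12}\begin{bmatrix}0&1\\-1&0\end{bmatrix}\gamma_{12}^T\varphi_2^T C_3
\]
times a $2\times 2$ determinant in the coefficients. \Cref{linear algebra lemma - alternating matrix of minors} and \Cref{Linear algebra lemma - minors of minors} then convert this product into signed minors. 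Using \Cref{C_i^Tphi_j Artinian so HB} (the entries of $C_i^T\varphi_j$ are the minors of $\gamma_{ij}$) and \Cref{dim 4 - I(C1) I(C2) I(C3) HB resolutions} (the entries of $C_j$ are the minors of $\varphi_j$), it should reduce to a multiple of $[g_0\,\,g_1\,\,g_2\,\,g_3]^T$. That multiple is absorbed by $\theta S$.

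I expect this sign and minor bookkeeping with $4\times 3$ and $3\times 2$ matrices to be the main obstacle. If the explicit computation becomes unwieldy, I would argue more abstractly. Over the fraction field, $\ker M$ is one-dimensional and contains a vector whose last three coordinates are proportional to $(\alpha_1,\alpha_2,\alpha_3)$. This can be justified by checking $\pm\alpha_1S_1\pm\alpha_2S_2\pm\alpha_3S_3\in\ker[f_0'\,\cdots\,f_3']\cap\ker\psi^T$, which equals $R\,S$ by the dual exact sequence as in (\ref{dim 3 - dualized res of (g0,g1,g2)}). Hence $\theta$ exists.

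With $N$ in hand, consider the complex
\[
0\to R(-2a,-2b+n)\overset{N}{\to} R(0,-n)\oplus R(-a,-b+n-\mu_1)\oplus R(-a,-b+n-\mu_2)\oplus R(-a,-b+\mu_1+\mu_2)\overset{M}{\to} R^4 .
\]
Apply the Buchsbaum--Eisenbud criterion \cite[Cor. 1]{BE73} to it.

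\textbf{Rank of $M$.} One needs $\rk M=3$, and $S,S_1,S_2$ should already be independent. For instance, reducing modulo the submodule generated by $S$ and examining $\psi^T S_i$: $\psi^TS_1$ has entries supported in rows $2,3$, and similar patterns hold for $S_2,S_3$.

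\textbf{Depth of the entries of $N$.} One needs $\hgt I_1(N)\ge 2$. This holds because $I_U\subseteq(\alpha_1,\alpha_2,\alpha_3)$ by (\ref{dim 4 - f alpha decomposition}) and $\hgt I_U=2$.

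Exactness then gives $\ker M=\im N$. Every element of the source $R(-2a,-2b+n)$ has first degree at least $2a$, so $N_{2a-1,b-1}=0$ and $M_{2a-1,b-1}$ is injective. Combined with the count above, this gives $\dim\langle S,S_1,S_2,S_3\rangle_{2a-1,b-1}=2ab$.
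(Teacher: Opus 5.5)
Your proposal is correct and has the same skeleton as the paper's proof: a generator $N$ of $\ker M$, the Buchsbaum--Eisenbud criterion, vanishing of $N_{2a-1,b-1}$, and the column count. Two steps are justified differently.

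\textbf{The relation $MN=0$.} The paper does the explicit bookkeeping you anticipated: it pairs terms, applies Cauchy--Binet, and passes to complementary minors of $[C_i\,|\,C_j]$. This yields $\alpha_1S_1+\alpha_2S_2+\alpha_3S_3=-HS$ with $H$ given explicitly. Your fallback argument is complete and much shorter. The defining properties of $\varphi_j,\gamma_{ij}$ and \Cref{dim 4 - alpha decomp} give
$\psi^TS_1=[0\,\,{-\alpha_3}\,\,\alpha_2]^T$, $\psi^TS_2=[\alpha_3\,\,0\,\,{-\alpha_1}]^T$, and $\psi^TS_3=[{-\alpha_2}\,\,\alpha_1\,\,0]^T$. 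So, with all signs $+$, the combination lies in $\ker\psi^T=RS$. This uses exactness of the dualized Hilbert--Burch complex, extended from $A$ to $R$ by flatness. It produces a bihomogeneous $\theta$ of bidegree $(2a,2b-2n)$, and the theorem needs nothing more, so I would make this your main argument.

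\textbf{The rank of $M$.} The paper computes the same matrix $\psi^TM=[\,0\mid\text{alternating matrix of }(\alpha_1,\alpha_2,\alpha_3)\,]$ in (\ref{MT omega zero}). It uses it to prove $\ker M^T=R\,[f_0'\,\cdots\,f_3']^T$, which requires $\alpha_1,\alpha_2$ to be a regular sequence via \Cref{Hartshorne lemma}. It then applies Buchsbaum--Eisenbud to the transposed complex to get $\rk M=3$. You can read the rank off directly. The alternating matrix has rank $2$ once some $\alpha_i\neq 0$, which is automatic since $I_U\subseteq(\alpha_1,\alpha_2,\alpha_3)$. Moreover $\ker\psi^T$ is spanned by $S$, which is itself a column of $M$.

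Your specific remark that $S,S_1,S_2$ are already independent holds exactly when $\alpha_3\neq 0$. That needs a separate basepoint argument, which you can avoid by using all three of $S_1,S_2,S_3$.

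\textbf{What each route gives.} The paper's route provides an explicit formula for $H$ and the exactness of the transposed complex, which is structure relevant to its last open question. Yours is shorter and avoids both the sign bookkeeping and the regular-sequence argument.
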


\begin{proof}
Similar to the proof of \Cref{dim V = 3 - main theorem}, consider the matrix $M=[S\,|\,S_1\,|\,S_2\,|\,S_3]$ for $S$ as in (\ref{dim 4 - single graded syzygy}) and $S_1$, $S_2$, and $S_3$ as in \Cref{dim 4 new syzygies}. We must show that $M_{2a-1,b-1}$ consists of $2ab$ linearly independent columns. For the independence, it suffices to show that $M_{2a-1,b-1}$ is injective. We note that $M$ is not injective and, similar to the previous section, we describe the kernel of $M$ and show it vanishes in bidegree $(2a-1,b-1)$. We claim that the kernel of $M$ is spanned by 
\begin{equation}\label{dim 4 - N defn}
N=\left[
\begin{array}{c}
     H\\
      \alpha_1\\
      \alpha_2\\
      \alpha_3
\end{array}
\right]
\end{equation}
where $H= -(a_{20}c_{21}-a_{21}c_{20})+(b_{10}c_{11}-b_{11}c_{10})-(a_{30}b_{31}-a_{31}b_{30})$, which we verify first.

We note that $N$ consists of bihomogeneous entries of $R$. In particular, $H$ is a bihomogeneous polynomial of bidegree $(2a,2b-2n)$, which follows from a brief computation similar to the proof of \Cref{dim 4 - alpha decomp} and the discussion following it. As before, we show that $N \in \ker M$ and then apply the acyclicity criterion \cite[Cor. 1]{BE73} to show that it actually generates $\ker M$. To verify that $MN=0$, consider the following computation. From \Cref{dim 4 new syzygies}, we note that

\begin{equation}\label{alpha1S+alpha2S+alpha3S computation}
    \begin{aligned}
     \alpha_1S_1 + \alpha_2S_2+\alpha_3S_3 =&
\,\left(\varphi_2\gamma_{12}\begin{bmatrix}
                 b_{30}\\
                 b_{31}
             \end{bmatrix}
             -\varphi_3\gamma_{13}\begin{bmatrix}
                 c_{20}\\
                 c_{21}
             \end{bmatrix}\right) \alpha_1
+\left(\varphi_3\gamma_{23}\begin{bmatrix}
                 c_{10}\\
                 c_{11}
             \end{bmatrix}
             -\varphi_2\gamma_{12}\begin{bmatrix}
                 a_{30}\\
                 a_{31}
             \end{bmatrix}\right)\alpha_2\\[1ex] 
&\,+\left(\varphi_3\gamma_{13}\begin{bmatrix}
                 a_{20}\\
                 a_{21}
             \end{bmatrix}
             -\varphi_3\gamma_{23}\begin{bmatrix}
                 b_{10}\\
                 b_{11}
             \end{bmatrix}\right)\alpha_3\\[1ex]         
=&\,\left(
\varphi_3\gamma_{23}\begin{bmatrix}
                 c_{10}\\
                 c_{11}
             \end{bmatrix}\alpha_2 -
             \varphi_3\gamma_{23}\begin{bmatrix}
                 b_{10}\\
                 b_{11}
             \end{bmatrix} \alpha_3
\right)+
\left(
\varphi_3\gamma_{13}\begin{bmatrix}
                 a_{20}\\
                 a_{21}
             \end{bmatrix}\alpha_3
             -\varphi_3\gamma_{13}\begin{bmatrix}
                 c_{20}\\
                 c_{21}
             \end{bmatrix} \alpha_1 
\right)\\[1ex]
&\,+
\left(\varphi_2\gamma_{12}\begin{bmatrix}
                 b_{30}\\
                 b_{31}
             \end{bmatrix}\alpha_1-\varphi_2\gamma_{12}\begin{bmatrix}
                 a_{30}\\
                 a_{31}
             \end{bmatrix}\alpha_2\right)
    \end{aligned}
\end{equation}
and we claim that each of these summands is a multiple of $S$. We verify this for the first summand as a similar computation holds for the remaining ones. From \Cref{dim 4 - alpha decomp}, we have
\begin{equation}\label{alpha_iS_i sum computation}
\begin{aligned}
    \varphi_3\gamma_{23}\begin{bmatrix}
                 c_{10}\\
                 c_{11}
             \end{bmatrix}\alpha_2 -
             \varphi_3\gamma_{23}\begin{bmatrix}
                 b_{10}\\
                 b_{11}
             \end{bmatrix} \alpha_3&=\varphi_3\gamma_{23}\begin{bmatrix}
                 c_{10}\\
                 c_{11}
             \end{bmatrix} [b_{10}\,\,b_{11}]
             \gamma_{23}^T\varphi_3^TC_1 -
             \varphi_3\gamma_{23}\begin{bmatrix}
                 b_{10}\\
                 b_{11}
             \end{bmatrix} [c_{10}\,\,c_{11}]
             \gamma_{23}^T\varphi_3^TC_1 \\[1ex]
             &=(\varphi_3\gamma_{23})\left(\begin{bmatrix}
                 c_{10}\\
                 c_{11}
             \end{bmatrix} [b_{10}\,\,b_{11}] - \begin{bmatrix}
                 b_{10}\\
                 b_{11}
             \end{bmatrix} [c_{10}\,\,c_{11}]\right)(\varphi_3\gamma_{23})^TC_1\\[1ex]
             &=(b_{11}c_{10}-b_{10}c_{11})\cdot (\varphi_3\gamma_{23})\begin{bmatrix}
               0&1\\-1&0  
             \end{bmatrix} (\varphi_3\gamma_{23})^TC_1.
\end{aligned}
\end{equation}

By \Cref{linear algebra lemma - alternating matrix of minors}, the product $(\varphi_3\gamma_{23})\begin{bmatrix}
               0&1\\-1&0  
\end{bmatrix} (\varphi_3\gamma_{23})^T$ is the $4\times 4$ alternating matrix whose $ij$ entry is the $2\times 2$ minor of $\varphi_3\gamma_{23}$ consisting of rows $i$ and $j$. However, we claim that this agrees with the matrix of complementary minors of $[C_2\,|\,C_3]$, i.e. the alternating matrix with $ij$-entry obtained by \textit{deleting} rows $i$ and $j$ of $[C_2\,|\,C_3]$. Write $(\varphi_3\gamma_{23})_{ij}$ to denote the minor of $\varphi_3\gamma_{23}$ consisting of rows $i$ and $j$. By the Cauchy-Binet formula, and noting from \Cref{C_i^Tphi_j Artinian so HB} that the minors of $\gamma_{23}$ are the complementary entries of $C_2^T\varphi_3$ (up to sign),
it follows that
    \[
(\varphi_3\gamma_{23})_{ij} = \det \left[
\begin{array}{c}
     R_i  \\
     \hline
     R_j\\
     \hline
     C_2^T\varphi_3      
\end{array}
\right]
    \]
where $R_i$, $R_j$ denote rows $i$ and $j$ of $\varphi_3$. However, note that $C_2^T\varphi_3$ is a combination of the rows of $\varphi_3$ with coefficients the entries of $C_2$. Thus the claim follows from multilinearity of determinants, and noting that the signed minors of $\varphi_3$ are precisely the entries of $C_3$ by \Cref{dim 4 - I(C1) I(C2) I(C3) HB resolutions}. 

From this discussion and (\ref{alpha_iS_i sum computation}), it follows that 
\begin{equation}\label{alpha_iS_i sum computation part 2}
 \varphi_3\gamma_{23}\begin{bmatrix}
                 c_{10}\\
                 c_{11}
             \end{bmatrix}\alpha_2 -
             \varphi_3\gamma_{23}\begin{bmatrix}
                 b_{10}\\
                 b_{11}
             \end{bmatrix} \alpha_3 = (b_{11}c_{10}-b_{10}c_{11})\cdot \Delta_{23}C_1
\end{equation}
where $\Delta_{23}$ is the alternating matrix with $ij$ entry the signed minor of $[C_2\,|\,C_3]$ obtained by deleting rows $i$ and $j$. With this, it follows easily that $\Delta_{23}C_1$ consists of the signed $3\times 3$ minors of $\psi$, i.e. $\Delta_{23}C_1 =S$ by \Cref{dim V = 4 - (g_0 g_1 g_2 g_3) Artinian so HB G2P}. As such, from (\ref{alpha_iS_i sum computation part 2}), it follows that 
\[
\varphi_3\gamma_{23}\begin{bmatrix}
                 c_{10}\\
                 c_{11}
             \end{bmatrix}\alpha_2 -
             \varphi_3\gamma_{23}\begin{bmatrix}
                 b_{10}\\
                 b_{11}
             \end{bmatrix} \alpha_3 = (b_{11}c_{10}-b_{10}c_{11}) S.
\]
A similar argument, and keeping track of the signs throughout, shows that the remaining summands in (\ref{alpha1S+alpha2S+alpha3S computation}) are precisely
\[
\begin{array}{l}
\varphi_3\gamma_{13}\begin{bmatrix}
                 a_{20}\\
                 a_{21}
             \end{bmatrix}\alpha_3
             -\varphi_3\gamma_{13}\begin{bmatrix}
                 c_{20}\\
                 c_{21}
             \end{bmatrix} \alpha_1  = (a_{20}c_{21}-a_{21}c_{20})S, \\[3ex]
\varphi_2\gamma_{12}\begin{bmatrix}
                 b_{30}\\
                 b_{31}
             \end{bmatrix}\alpha_1-\varphi_2\gamma_{12}\begin{bmatrix}
                 a_{30}\\
                 a_{31}
             \end{bmatrix}\alpha_2= (a_{30}b_{31}-a_{31}b_{30})S.
\end{array}
\]
With this observation and (\ref{alpha1S+alpha2S+alpha3S computation}), it follows that $\alpha_1S_1 + \alpha_2S_2+\alpha_3S_3 = -HS$, and so we see that $N\in \ker M$.

Now that it has been shown that $MN=0$, we may form the following complex of bigraded $R$-modules and bihomogeneous maps
\begin{equation}\label{dim 4 - MN complex}
 0 \rightarrow R(-2a,-2b+n) \overset{N}{\longrightarrow} \begin{array}{c}
 R(0,-n)\\
 \oplus \\
R(-a,-b+n-\mu_1)\\
 \oplus \\
R(-a,-b+n-\mu_2)\\
\oplus\\
R(-a,-b+\mu_1+\mu_2)
\end{array} \overset{M}{\longrightarrow} \,
R^4
\end{equation}
which we claim is exact. We apply the acyclicity criterion \cite[Cor. 1]{BE73} once more, and need only verify that its conditions are met. Note that $I_U = (f_0',f_1',f_2',f_3') \subseteq (\alpha_1,\alpha_2,\alpha_3)$ by \Cref{dim 4 f' psi alpha}, and so the entries of $N$ form an ideal of height at least $2$ in $R$. Thus, since $R$ is a domain, we need only show that $\rk M=3$ to conclude that (\ref{dim 4 - MN complex}) is exact and so $\ker M$ is spanned by $N$. It suffices to show that $\rk M^T =3$, which will be the simpler computation. 

Notice that, since the columns of $M$ are syzygies on $I_U$, we have $[f_0'\,\,f_1'\,\,f_2'\,\,f_3']^T \in \ker M^T$. We claim that this actually generates $\ker M^T$. Indeed, for any $\omega\in \ker M^T$ notice that, since $S^T$ is the first row of $M^T$, $\omega$ must belong to the image of its syzygy matrix $\psi$. Hence we may write
\begin{equation}\label{omega matrix description}
\omega = \psi \left[\begin{array}{c}
    \beta_1\\
    \beta_2\\
    \beta_3
\end{array}\right]
\end{equation}
for some bihomogeneous $\beta_1, \beta_2, \beta_3$, similar to the expression in (\ref{dim 4 - f alpha decomposition}). With this, an argument similar to the proof of \Cref{dim 4 new syzygies} shows that 
\begin{equation}\label{MT omega zero}
0 = M^T\omega = \left[\begin{array}{c}
0 \\[0.25ex]
\alpha_2\beta_3- \alpha_3\beta_2\\[0.25ex]
\alpha_3\beta_1-\alpha_1\beta_3\\[0.25ex]
\alpha_1\beta_2- \alpha_2\beta_1
\end{array}
\right]_.
\end{equation}

We note that $\alpha_1,\alpha_2$ forms a regular sequence. As before, writing $h=\gcd(\alpha_1,\alpha_2)$, it suffices to show that $h$ is a unit. From (\ref{dim 4 - f alpha decomposition}) we have $I_U\subseteq (\alpha_1,\alpha_2,\alpha_3) \subseteq (h,\alpha_3)$, hence $V(h,\alpha_3) \subseteq V(I_U) = \emptyset$, since $I_U$ is basepoint-free. Thus by \Cref{Hartshorne lemma}, it follows that $h$ must have bidegree $(0,0)$ and is hence a unit. Since $\alpha_1, \alpha_2$ is a regular sequence and $\alpha_1\beta_2- \alpha_2\beta_1 =0$ in (\ref{MT omega zero}), it follows that $\beta_1 = \lambda\alpha_1$ and $\beta_2 = \lambda\alpha_2$ for some $\lambda\in R$. Moreover, since $\alpha_3\beta_1 - \alpha_1\beta_3=0$ and $R$ is a domain, it follows that $\beta_3= \lambda\alpha_3$ as well. Thus from (\ref{omega matrix description}) and \Cref{dim 4 f' psi alpha}, it follows that $\omega = \lambda[f_0'\,\,f_1'\,\,f_2'\,\,f_3']^T$ and so $\ker M^T$ is indeed spanned by $[f_0'\,\,f_1'\,\,f_2'\,\,f_3']^T$. Thus we have the bigraded free complex
\begin{equation}\label{M^T complex}
 0 \rightarrow R(-a,-b) \overset{[f_0'\,\,f_1'\,\,f_2'\,\,f_3']^T}{\longestrightarrow}
 R^4 \overset{M^T}{\longrightarrow} \,
\begin{array}{c}
 R(0,n)\\
 \oplus \\
R(a,b-n+\mu_1)\\
 \oplus \\
R(a,b-n+\mu_2)\\
\oplus\\
R(a,b-\mu_1-\mu_2)
\end{array}
\end{equation}
which is exact. Thus we have $\rk M^T=3$ by \cite[Cor. 1]{BE73}, as claimed. Hence $\rk M=3$ and so (\ref{dim 4 - MN complex}) is exact, by \cite[Cor. 1]{BE73} once more.

As (\ref{dim 4 - MN complex}) is a bigraded free exact sequence, we may consider its strand in bidegree $(2a-1,b-1)$. From degree considerations in each component of the bigrading, and the bidegrees of the entries of $N$, it follows that $N_{2a-1,b-1} =0$. Thus $M_{2a-1,b-1}$ is injective, and so its columns are linearly independent. Thus we need only count them to verify the original assertion. As the bidegrees of each syzygy are recorded in (\ref{dim 4 - MN complex}), we proceed as in the proofs of \Cref{dim V = 2 - main theorem} and \Cref{dim V = 3 - main theorem}.

As before, $S$ has entries in bidegree $(0,n)$, hence it yields
$$h^0(\O_{\P^1\times \P^1}(2a-1,b-n-1)) = 2a(b-n)$$
columns of $M_{2a-1,b-1}$. By \Cref{dim 4 new syzygies}, syzygies $S_1$, $S_2$, and $S_3$ consist of entries in bidegree $(a,b-n+\mu_1)$, $(a,b-n+\mu_2)$, and $(a,b-\mu_1-\mu_2)$ respectively. Thus $S_1$ contributes
$$h^0(\O_{\P^1\times \P^1}(a-1,n-\mu_1-1))=a(n-\mu_1)$$
columns to $M_{2a-1,b-1}$, the syzygy $S_2$ yields
$$h^0(\O_{\P^1\times \P^1}(a-1,n-\mu_2-1))= a(n-\mu_2)$$
columns of $M_{2a-1,b-1}$, and lastly the syzygy $S_3$ gives rise to
$$h^0(\O_{\P^1\times \P^1}(a-1,\mu_1+\mu_2-1)) = a(\mu_1+\mu_2)$$
columns of $M_{2a-1,b-1}$. Summing these figures, we have $\dim \langle S,S_1,S_2,S_3\rangle_{2a-1,b-1} = 2ab$.
\end{proof}

\begin{cor}
With the assumptions of \Cref{dim V = 4 - main theorem}, the first differential $d_1$ of the bigraded strand $\Z_{2a-1,b-1}$ of the approximation complex $\Z$ on the generators of $I_U$ is determined by the syzygies $\{S,S_1,S_2,S_3\}$. 
\end{cor}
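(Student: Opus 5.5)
The plan mirrors the corollaries following \Cref{dim V = 2 - main theorem} and \Cref{dim V = 3 - main theorem}: we combine \Cref{dim V = 4 - main theorem} with \Cref{syzygies to approx comp}, \Cref{Botbol det Z}, and \Cref{Botbol implicit eqn}.

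\textbf{Step 1: the shape of $(d_1)_\nu$.} By (\ref{d_1 syz map}) and \Cref{syzygies to approx comp}, the strand $(d_1)_\nu:(\Z_1)_\nu\to(\Z_0)_\nu$ with $\nu=(2a-1,b-1)$ sends each syzygy $(c_0,\ldots,c_3)\in\syz(I_U)_\nu$ to $\sum c_iy_i$. Hence a matrix representation has one column for each element of a $\K$-basis of $\syz(I_U)_{2a-1,b-1}$, with rows indexed by the monomials of $R_{2a-1,b-1}$. There are $h^0(\O_{\P^1\times\P^1}(2a-1,b-1))=2ab$ such monomials.

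\textbf{Step 2: dimension count.} Since $U$ is basepoint free, $(H_2)_{4a-1,3b-1}=0$, so \Cref{Botbol det Z} says $(d_1)_\nu$ is square of size $2ab\times 2ab$. Therefore $\dim_\K\syz(I_U)_{2a-1,b-1}=2ab$.

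\textbf{Step 3: the syzygies fill this space.} \Cref{dim V = 4 - main theorem} gives $\langle S,S_1,S_2,S_3\rangle_{2a-1,b-1}\subseteq \syz(I_U)_{2a-1,b-1}$ of dimension $2ab$. A subspace of full dimension is the whole space, so equality holds. Concretely, the columns of $M_{2a-1,b-1}$ shown injective in that proof are obtained by multiplying $S,S_1,S_2,S_3$ by monomials of the complementary bidegrees. They form a basis of $(\Z_1)_\nu$, hence the columns of $(d_1)_\nu$ are the polynomials $[y_0\,\,y_1\,\,y_2\,\,y_3]\,m S_j$ for these monomials $m$.

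\textbf{Step 4: the determinant.} \Cref{Botbol implicit eqn} then identifies the determinant of this $2ab\times 2ab$ matrix with $F^{\deg\phi_U}$.

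The only delicate point is Step 2: we need to know that $\dim \syz(I_U)_\nu$ is exactly $2ab$, not merely at least $2ab$. This is exactly what \Cref{Botbol det Z} provides in the basepoint-free setting, so the argument reduces to citing it correctly.
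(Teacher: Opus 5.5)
Your proposal is correct and follows the same route as the paper, which simply cites \Cref{dim V = 4 - main theorem}, \Cref{Botbol det Z}, and \Cref{Botbol implicit eqn}. Your version makes explicit the step the paper leaves implicit. In the basepoint-free case, \Cref{Botbol det Z} forces $\dim_\K \syz(I_U)_{2a-1,b-1}=2ab$, so the $2ab$-dimensional span $\langle S,S_1,S_2,S_3\rangle_{2a-1,b-1}$ is the whole space, and the columns $[y_0\,\,y_1\,\,y_2\,\,y_3]\,mS_j$ give a full square matrix representation of $(d_1)_{2a-1,b-1}$.
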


\begin{proof}
This follows from \Cref{dim V = 4 - main theorem}, \Cref{Botbol det Z}, and \Cref{Botbol implicit eqn}, noting that $\dim \langle S,S_1,S_2,S_3\rangle_{2a-1,b-1}$ is precisely the number of columns contributed by these syzygies to a matrix representation of $d_1$.
\end{proof}

With this and \Cref{Botbol implicit eqn}, the determinant of the $2ab\times 2ab$ matrix representation of $d_1$ is a power of the implicit equation of $X_U$.

\begin{ex}\label{Example - dim V = 4}
Consider the basepoint-free tensor product surface parameterization $\phi_U:\,\P^1\times\P^1\rightarrow \P^3$ given by the subspace
$$U = \Span\{-s^2u^4v-s^2v^5,\,\,
s^2u^5-s^2u^3v^2+s^2uv^4-t^2v^5,\,\,
-t^2u^4v+t^2uv^4,\,\,
s^2u^5+t^2u^{5} \}$$
of $H^0(\O_{\P^1\times\P^1}(2,5))$. A brief computation shows that $S=[u^3\,\,u^2v\,\,uv^2\,\,v^3]^T$ is a syzygy on $I_U$, hence the matrix $A$ in \Cref{first dim V generators} is the identity. Thus we have $V=U$, and so clearly $\dim V=4$ in this setting. Moreover, as $n=3$ here, it follows that the transition matrix $B$ in \Cref{first dim V generators} is the identity as well. Hence we may construct the three remaining syzygies using the polynomials above as the basis for $V=U$.

Notice that the entries of $S$ are precisely the signed minors of 
\[
\psi = \left[\begin{array}{ccc}
-v &0&0 \\
u& -v&0\\
0&u&-v\\
0&0&u
\end{array}
\right]
\]
and moreover, $[f_0\,\,f_1\,\,f_2\,\,f_3] = \psi [\alpha_1\,\,\alpha_2\,\,\alpha_3]^T$ where $f_0,f_1,f_2,f_3$ are the polynomials spanning $U$ and
\[
\begin{array}{ccc}
     \alpha_1= t^2u^4+s^2v^4, \quad\quad &\alpha_2=2t^2v^4, \quad\quad &\alpha_3= s^2u^4+t^2v^4. 
\end{array}
\]
Writing $C_1, C_2, C_3$ to denote the columns of $\psi$, by \Cref{dim 4 - I(C1) I(C2) I(C3) HB resolutions} the entries of the columns may be realized as the signed minors of the following Hilbert-Burch syzygy matrices
\[
\begin{array}{ccc}
   \varphi_1=\left[
   \begin{array}{ccc}
   0&0&u\\
   0&0&v\\
   0&1&0\\
   1&0&0
   \end{array}
   \right]_,\quad\quad
   &\varphi_2=\left[
   \begin{array}{ccc}
   0&1&0\\
   0&0&u\\
   0&0&v\\
   1&0&0
   \end{array}
   \right]_,\quad\quad
   &\varphi_3=\left[
   \begin{array}{ccc}
   0&1&0\\
   1&0&0\\
   0&0&u\\
   0&0&v
   \end{array}
   \right]_.
\end{array}
\]
Moreover, we note that 
\[
\begin{array}{ccc}
C_1^T\varphi_2= [0\,\,\,-v\,\,\,u^2], \quad\quad & C_1^T\varphi_3= [u\,\,\,-v\,\,\,0], \quad\quad &C_2^T\varphi_3=[-v\,\,\,0\,\,\,u^2],
\end{array}
\]
which, following \Cref{C_i^Tphi_j Artinian so HB}, have Hilbert-Burch syzygy matrices 
\[
\begin{array}{ccc}
   \gamma_{12}=\left[
   \begin{array}{ccc}
   1&0\\
   0&u^2\\
   0&v
   \end{array}
   \right]_,\quad\quad
   &\gamma_{13}=\left[
   \begin{array}{ccc}
   0&v\\
   0&u\\
   -1&0
   \end{array}
   \right]_,\quad\quad
   &\gamma_{23}=\left[
   \begin{array}{ccc}
   0&u^2\\
   -1&0\\
   0&v
   \end{array}
   \right]_.
\end{array}
\]
With this and $\alpha_1, \alpha_2, \alpha_3$ above, we note that 
\[
\begin{array}{ll}
\alpha_1=C_2^T \varphi_3\gamma_{13}\begin{bmatrix}
a_{20}\\
a_{21}
\end{bmatrix}=[ -u^2\,\,\,-v^2 ]\begin{bmatrix}
-t^2u^2\\
-s^2v^2
\end{bmatrix}\quad\quad\quad
&
\alpha_1= C_3^T \varphi_2\gamma_{12}\begin{bmatrix}
 a_{30}\\
a_{31}
\end{bmatrix}=[u \,\,\,-v^3 ]\begin{bmatrix}
t^2u^3\\
-s^2v
\end{bmatrix}\\[4ex]
\alpha_2= C_3^T \varphi_2\gamma_{12}\begin{bmatrix}
b_{30}\\
b_{31}
\end{bmatrix}=[u \,\,\,-v^3 ]\begin{bmatrix}
0\\
-2t^2v
\end{bmatrix}
& \alpha_2 =C_1^T \varphi_3\gamma_{23}\begin{bmatrix}
 b_{10}\\
b_{11}
\end{bmatrix}=[ v\,\,\,u^3 ]\begin{bmatrix}
2t^2v^3\\
0
\end{bmatrix}\\[4ex]
\alpha_3 =C_2^T \varphi_3\gamma_{13}\begin{bmatrix}
c_{20}\\
c_{21}
\end{bmatrix} = [ -u^2\,\,\,-v^2 ]\begin{bmatrix}
-s^2u^2\\
-t^2v^2
\end{bmatrix}
&\alpha_3=C_1^T \varphi_3\gamma_{23}\begin{bmatrix}
c_{10}\\
c_{11}
\end{bmatrix}=[ v\,\,\,u^3 ]\begin{bmatrix}
t^2v^3\\
s^2u
\end{bmatrix}
\end{array}
\]
and so, following \Cref{dim 4 new syzygies}, we have
\[\begin{array}{ccc}
S_1 = \begin{bmatrix}
t^2uv^2-2t^2u^2v\\
t^2v^3-2t^2uv^2\\
-s^2u^3-2t^2v^3\\
-s^2u^2v
\end{bmatrix}_,\quad\quad
& S_2 = \begin{bmatrix}
 s^2u^2v - t^2v^3\\
 s^2u^3+s^2uv^2\\
 s^2u^2v+s^2v^3\\
 s^2uv^2-t^2u^3
\end{bmatrix}_,
\quad\quad
&S_3=\begin{bmatrix}
 2t^2v^3-s^2uv^2\\
 -s^2v^3\\
 t^2u^3\\
 t^2u^2v
\end{bmatrix}_.
\end{array}
\]
for the additional syzygies on $I_U$.

Write $\K[x_0,x_1,x_2,x_3]$ to denote the coordinate ring of $\P^3$ and let $M=[S\,|\,S_1\,|\,S_2\,|\,S_3]$ be as in the proof of \Cref{dim V = 4 - main theorem}.
Following \Cref{syzygies to approx comp} and \Cref{Botbol det Z}, we multiply $[x_0\,\,x_1\,\,x_2\,\,x_3]M$ and take this product in bidegree $(2a-1,b-1)$ of $R= \K[s,t,u,v]$. Hence by \Cref{dim V = 4 - main theorem} and its corollary, we obtain a $20\times 20$ matrix representation of the differential $d_1$ of $\Z_{2a-1,b-1}$, with entries in $\K[x_0,x_1,x_2,x_3]$. Moreover, a computation shows that $\det d_1 = F^2$ where $F$ is the implicit equation. In particular, $\deg F =10$ and by \Cref{Botbol implicit eqn} the degree of the rational map defined by $U$ is $\deg \phi_U =2$.
\end{ex}

Although the process outlined in the current and previous sections requires the computation of multiple syzygies, we note that the syzygies needed to construct $S_1,S_2,S_3$ have entries in $\K[u,v]$ and are all of Hilbert-Burch type. In particular, they are relatively quick to compute, especially compared to computing $\syz(I_U)$ over $R=\K[s,t,u,v]$. For instance, computations in \texttt{Macaulay2} \cite{Macaulay2} show that finding the implicit equation of $X_U$ in \Cref{Example - dim V = 4} using the constructions of \Cref{dim V = 4 - main theorem} is slightly faster than computing the entire syzygy module $\syz(I_U)$ and using it to produce $\Z_{2a-1,b-1}$. However, for larger $a$, $b$, and $n$, $\syz(I_U)$ typically becomes larger, whereas the number of syzygies required here is fixed, and these syzygies involve fewer variables. Further computations show that the methods presented here tend to be faster overall.

\section{Open questions}\label{Open questions section}

We conclude the article with some open questions stemming from the results presented here.

\begin{quest}
The assumption that $U$ is basepoint-free is actually used quite sparingly throughout. Regardless of basepoints, the constructions presented here are syzygies on $I_U$ and hence appear within the approximation complex. This raises the question of whether these constructions can be applied to determine the implicit equation of $X_U$ when basepoints are present. In particular, can one determine the implicit equation if $I_U$ has a singly graded syzygy and if there are basepoints present which are \textit{locally complete intersections}, or if the basepoints of $\phi_U$ are in generic position, as in \cite{Duarte17}?
\end{quest}

\begin{quest}
Within the assumptions of \Cref{General Setting}, we assume that the singly graded syzygy has minimal degree, in case there are multiple such syzygies. In \cite{DS16}, it is shown that basepoint-freeness ensures there can be at most one such syzygy when $n=1$. As the setting of \cite{DS16} is similar to the case when $\dim V=2$ in \Cref{dim 2 section}, it seems plausible that one might be able to extend this result. Does this assumption limit the number of singly graded minimal syzygies in general?
\end{quest}

\begin{quest}
In \cite{DS16}, Duarte and Schenck show that the singular locus of $X_U$ contains a line if $I_U$ has a linear syzygy. As the setting of \cite{DS16} is similar to the general case when $\dim V=2$ in \Cref{dim 2 section}, it seems natural to ask whether this phenomenon persists in general. Do the matrix representations of the implicit equation obtained here give insight into the singular locus of $X_U$?
\end{quest}

\begin{quest}
As noted in the introduction, the importance of a singly graded syzygy of a tensor product surface ideal $I_U$ was originally noticed by Schenck, Seceleanu, and Validashti in \cite{SSV14}. There, the presence of a linear syzygy in bidegree $(0,1)$ or $(1,0)$ was shown to impose strong conditions on a bigraded free resolution of $I_U$ when $U\subseteq H^0(\O_{\P^1\times \P^1}(2,1))$, motivating the further study by Duarte and Schenck in \cite{DS16}. Does a similar phenomenon occur in a free resolution of $I_U$ if it has a syzygy in bidegree $(0,n)$ or $(n,0)$ in general? In particular, the syzygies constructed here and the exact sequences in (\ref{dim 3 - MN complex}) and (\ref{dim 4 - MN complex}) may be taken as summands of such a resolution.
\end{quest}


\section*{Acknowledgements}

Computations using \texttt{Macaulay2} \cite{Macaulay2} were essential in the preparation of this article, providing numerous examples and verifying direct computations, leading to the constructions presented here.


\end{document}